\newtheorem{thm2}{Theorem}
\newtheorem{conj2}{Conjecture}
\newtheorem{thm}{Theorem}[section]
\newtheorem{cor}[thm]{Corollary}
\newtheorem{lem}[thm]{Lemma}
\newtheorem{prop}[thm]{Proposition}
\theoremstyle{definition}
\newtheorem{defn}[thm]{Definition}
\newtheorem{rmk}[thm]{Remark}
 \DeclareMathOperator{\rk}{rk}
\newcommand{\C}{\ensuremath\mathds{C}}
\newcommand{\Q}{\ensuremath\mathds{Q}}
\newcommand{\F}{\ensuremath\mathrm{F}}
\newcommand{\PP}{\ensuremath\mathds{P}}
\newcommand{\calO}{\ensuremath\mathcal{O}}
\newcommand{\HH}{\ensuremath\mathrm{H}}
\newcommand{\CH}{\ensuremath\mathrm{CH}}
\newcommand{\set}[1]{\left\{#1\right\}}
\begin{document}

	\title{The motive of the Hilbert cube}
	\author{Mingmin Shen and Charles Vial}
	
	\thanks{2010 {\em Mathematics Subject Classification.} 14C05, 14C25, 14C15}
	
	\thanks{{\em Key words and phrases.} Hilbert scheme of points, Algebraic cycles,
		Motives, Chow ring, Chow--K\"unneth decomposition, Bloch--Beilinson filtration.}

	\thanks{The second author is supported by EPSRC Early Career
		Fellowship number EP/K005545/1.}
	
	\address{
		KdV Institute for Mathematics, University of Amsterdam, P.O.Box 94248, 1090 GE
		Amsterdam, Netherlands}
	\email{M.Shen@uva.nl}
	\address{
		DPMMS, University of Cambridge, Wilberforce Road, Cambridge
		CB3 0WB, UK}
	\email{C.Vial@dpmms.cam.ac.uk}
	
	
	\begin{abstract} 
		The Hilbert scheme $X^{[3]}$ of length-$3$ subschemes of a smooth projective
		variety $X$ is known to be smooth and projective. We investigate whether the
		property of having a multiplicative Chow--K\"unneth decomposition is stable
		under taking the Hilbert cube. This is achieved by considering an explicit
		resolution of the map $X^3 \dashrightarrow X^{[3]}$. The case of the Hilbert
		square was taken care of in \cite{sv}. The archetypical examples of varieties
		endowed with a multiplicative Chow--K\"unneth decomposition is given by abelian
		varieties. Recent work seems to suggest that hyperK\"ahler varieties share the
		same property. Roughly, if a smooth projective variety $X$ has a multiplicative
		Chow--K\"unneth decomposition, then the Chow rings of its powers $X^n$ have a
		filtration, which is the expected  Bloch--Beilinson filtration, that is split.
	\end{abstract}
	
	\maketitle

	Let $X$ be a smooth projective variety of dimension $d$, over a field
	$k$, which we assume is endowed with a \emph{Chow--K\"unneth
		decomposition} $\{\pi^i_X \in \CH^d(X \times X) : 0 \leq i \leq
	2d\}$. Informally, this means that the K\"unneth decomposition of the
	$\ell$-adic class ($\ell \neq \mathrm{char} \, k$) of the diagonal
	$[\Delta_X] \in \HH^{2d}(X_{\bar{k}},\Q_\ell)$ is algebraic and lifts
	to a splitting of the rational Chow motive of $X$. Precisely, there
	exist cycles $\pi_X^i \in \CH^d(X\times X)$ with rational coefficients
	such that $\Delta_X = \sum_{i=0}^{2d} \pi_X^i$ and which satisfy, when
	seen as self-correspondences of $X$, $\pi_X^i \circ \pi_X^i =
	\pi_X^i$, $\pi_X^i \circ \pi_X^j = 0$ for $i\neq j$, and
	$(\pi_X^i)_*\HH^*(X_{\bar{k}},\Q_\ell) =
	\HH^i(X_{\bar{k}},\Q_\ell)$. Such a decomposition induces a descending
	filtration on the Chow groups of $X$ by the formula
	$$\F^s\CH^i(X) := \Big(\sum_{l\leq 2i-s} \pi_X^l\Big)_*\CH^i(X).$$
	The conjectures of Murre \cite{murre} predict that
	
	(B) $\CH^i(X) =
	\F^0\CH^i(X)$ and $\F^s\CH^i(X) = 0$ when $s > i$\,; 
	
	(D)
	$\F^1\CH^i(X) = \mathrm{Ker}\, \{cl : \CH^i(X) \rightarrow
	\HH^{2i}(X_{\bar{k}},\Q_\ell)\}$\,; 
	
	(C) any two Chow--K\"unneth
	decompositions for $X$ induce the same filtration $\F^\bullet$ on
	$\CH^i(X)$. 
	
	\noindent By Jannsen (see \cite{jannsen} for precise
	statements), the filtration induced by a Chow--K\"unneth decomposition
	should be the filtration conjectured by Bloch and Beilinson and,
	conversely, the filtration conjectured by Bloch and Beilinson should
	be induced by any Chow--K\"unneth decomposition. In particular, this
	filtration should actually be a filtration on the \emph{Chow ring}
	$\CH^*(X)$ of $X$. Because of Jannsen's result, we will refer to a
	filtration on the Chow groups of $X$ induced by a Chow--K\"unneth
	decomposition as a filtration of expected Bloch--Beilinson type.
	
	Every smooth projective variety is conjectured by Murre \cite{murre}
	to be endowed with a Chow--K\"unneth decomposition. Examples of
	varieties for which the existence of a Chow--K\"unneth decomposition
	has been settled include curves, surfaces \cite{murre2} and abelian
	varieties \cite{beauville}. A natural question is can one show that,
	provided a Chow--K\"unneth decomposition for $X$, the induced
	filtration (which is expected to be the Bloch--Beilinson filtration) is a
	filtration on the Chow ring of
	$X$\,? That is can one show that the induced filtration on the Chow
	groups of $X$ is compatible with intersection product\,? Before moving
	on to a more specific question, let us mention that the answer in the
	cases listed above (curves, surfaces and abelian varieties) is yes.
	
	While it is expected that the Bloch--Beilinson filtration, when it
	exists, is a filtration on the Chow ring of $X$, one cannot expect
	this filtration on the Chow ring of $X$ to be split in general. Indeed, by the
	expected properties of the Bloch--Beilinson filtration, one expects the graded
	pieces $\operatorname{Gr}^0_\mathrm{F} \CH^i (X)$ to inject into
	$\HH^{2i}(X_{\bar k},\Q_\ell)$ under the cycle class map. In particular
	$\operatorname{Gr}^0_\mathrm{F} \CH^d (X)$ is one-dimensional, where $d=\dim X$.
	On the other hand, one also expects that $\F^0 \CH^1(X) = \CH^1(X)$ and $\F^1
	\CH^1(X) = 0$ when $\HH^{1}(X_{\bar k},\Q_\ell)=0$. Therefore, if $X$ is such
	that  $\HH^{1}(X_{\bar k},\Q_\ell)=0$ and if the conjectural Bloch--Beilinson
	filtration is split, then the image of the intersection product map $\CH^1(X)
	\otimes \ldots \otimes \CH^1(X) \to \CH^d(X)$ is one-dimensional.
	There are however
	some examples of simply connected surfaces $X$ 
	for which the rank of the image of the intersection product
	$\CH^1(X) \otimes \CH^1(X) \rightarrow \CH^2(X)$ is $\geq 2$.
	(Consider for instance the blow-up of a simply-connected surface $S$ with $\dim
	\CH_0(S) \geq 2$ at a point that is not rationally
	equivalent to any cycle in $\mathrm{Im} \,\{\CH^1(S) \otimes \CH^1(S)
	\rightarrow \CH^2(S)\}$\,; then the self-intersection of the exceptional divisor
	provides a new dimension which was not in the image of the intersection
	product.)
	There are nonetheless examples of varieties for which the filtration splits. For
	example, Beauville \cite{beauville} proved that the filtration splits for
	abelian varieties. Ten years ago, Beauville and Voisin \cite{bv}
	observed that the filtration also splits for K3 surfaces. Then
	Beauville \cite{beauville1}, after studying intersection of divisors
	on certain hyperK\"ahler varieties, asked whether the conjectural
	Bloch--Beilinson filtration, if
	it exists, would split for all hyperK\"ahler varieties. Here, a
	hyperK\"ahler variety refers to a projective irreducible holomorphic symplectic
	manifold. This question was answered affirmatively in \cite{sv} in the
	case when $X$ is the Hilbert scheme of length-$2$ subschemes on a K3
	surface or the variety of lines on a generic cubic $4$-fold, and in
	\cite{vial} in the case when $X$ is the Hilbert scheme of length-$n$
	subschemes on a K3 surface for any integer $n$.
	
	In fact, when $X$ is the Hilbert scheme $S^{[n]}$ of length-$n$
	subschemes on a K3 surface $S$, we established in \cite{sv, vial} a
	stronger statement. In order to motivate that statement, let us
	make the following observation. Recall that if $X$ and $Y$ are smooth
	projective varieties with a Chow--K\"unneth decomposition, then the
	product $X\times Y$ is naturally endowed with the product
	Chow--K\"unneth decomposition $\pi_{X\times Y}^k := \sum_{i+j=k}
	\pi_X^i \otimes \pi_Y^j$ (see \eqref{eq composition correspondences} for the
	definition of the tensor product of correspondences seen as morphisms of Chow
	motives). However, having a Chow--K\"unneth
	decomposition inducing a filtration on the Chow ring that is split is
	not stable under product. Indeed, any Chow--K\"unneth decomposition on
	a curve induces a filtration that is split (there is nothing to
	intersect on a curve) but in general the filtration induced on the
	product of two curves is not split. A nicer notion, that is stable
	under product, is that of \emph{multiplicative Chow--K\"unneth
		decomposition}. A Chow--K\"unneth decomposition $\{\pi_X^i : 0 \leq
	i \leq 2d\}$ is said to be multiplicative if
	\begin{equation} \label{eq mult} \pi_X^k \circ \Delta_{123} \circ
	(\pi_X^i \otimes \pi_X^j) = 0 \in \CH^{2d}(X\times X \times X) \quad
	\text{whenever} \ k \neq i+j.
	\end{equation} 
	Here, $\Delta_{123} \in \CH^{2d}(X\times X \times X)$ is the class of
	the small diagonal $\{(x,x,x) : x \in X\}$ seen as a correspondence
	from $X\times X$ to $X$. At this point it should be noted that the
	relations \eqref{eq mult} always hold modulo homological equivalence,
	and that if $\alpha$ and $\beta$ are cycles in $\CH^*(X)$, then
	$(\Delta_{123})_*(\alpha \times \beta) = \alpha \cdot \beta$.  If $X$
	admits a multiplicative Chow--K\"unneth decomposition, then the
	induced filtration on the Chow groups of $X$ is a filtration on the
	Chow ring of $X$ that is split. 
	Indeed, given a Chow--K\"unneth decomposition $\{\pi_X^i\}$ for $X$
	and writing (whether $\{\pi_X^i\}$ is multiplicative or
	not) 
	\begin{equation} \label{eq grading}
	\CH^i(X)_s := (\pi_X^{2i-s})_*\CH^i(X),
	\end{equation}
	the condition that $\{\pi_X^i\}$ is
	multiplicative implies that $\CH^*(X) = \bigoplus_{i,s}\CH^i(X)_s$ is
	a bigraded ring, that is, $\CH^i(X)_s \cdot \CH^j(X)_t \subseteq
	\CH^{i+j}(X)_{s+t}$.  As a matter of fact (\emph{cf.} \cite[Section
	8]{sv}), K3 surfaces and abelian varieties not only have a
	filtration of expected Bloch--Beilinson type that is split but also have a
	multiplicative Chow--K\"unneth decomposition by work of
	Beauville--Voisin \cite{bv} and Beauville \cite{beauville},
	respectively. In \cite{sv}, we proved, under the technical, but yet natural,
	assumption
	that the Chern classes $c_p(X)$ belong to the graded-$0$ part
	$\CH^p(X)_0$ of $\CH^p(X)$, that if $X$ is a smooth projective variety
	that admits a multiplicative Chow--K\"unneth decomposition
	(\emph{e.g.} $X$ a K3 surface), then the Hilbert scheme of length-$2$
	subschemes $X^{[2]}$ also admits a multiplicative Chow--K\"unneth
	decomposition.

	Given a smooth projective variety $X$, Cheah \cite{cheah} showed that the
	Hilbert
	scheme $X^{[3]}$ of length-$3$ subschemes of $X$ is smooth and
	projective. (Cheah also showed that if $\dim X \geq 3$, the Hilbert scheme
	$X^{[n]}$ is never smooth when $n\geq 4$.) In this manuscript, we want to push
	further the method of \cite[Section 13]{sv} to show that whenever $X$
	admits a multiplicative Chow--K\"unneth decomposition, then its
	Hilbert cube $X^{[3]}$ also admits a multiplicative Chow--K\"unneth
	decomposition. The idea is basic\,: we resolve the rational map $X^3
	\dashrightarrow X^{[3]}$ by successively blowing up subvarieties that
	are invariant under the action of the symmetric group $\mathfrak{S}_3$. We
	obtain a generically finite morphism denoted $p : X_3 \to X^{[3]}$, 
	and we check that the properties of the Chow--K\"unneth decomposition
	of $X^3$ induced by that of $X$ (self-duality, multiplicativity, Chern
	classes belonging to the graded-$0$ part of the Chow groups) lift to $X_3$. We
	then check, and this requires a careful analysis of
	the geometry of $X^{[3]}$, that the resulting multiplicative Chow--K\"unneth
	decomposition on $X_3$
	descends along the morphism $p$. The main result of
	this paper is the
	following.
	\begin{thm2}\label{thm main}
		Let $X$ be a smooth projective variety that admits a self-dual multiplicative
		Chow--K\"unneth decomposition (see Definitions \ref{def selfdual} \& \ref{def
			mult}). Assume that the Chern
		classes of $X$ satisfy $c_p(X) \in \CH^p(X)_0$. 
		Then the Hilbert
		cube $X^{[3]}$ also admits a self-dual multiplicative
		Chow--K\"unneth decomposition, with the property that the Chern classes
		$c_p(X^{[3]})$ sit in $\CH^p(X^{[3]})_0$ . 
		
		\noindent In particular, still assuming $c_p(X) \in \CH^p(X)_0$, the Chow ring
		$\CH^*(X^{[3]})$ has a filtration,
		which is the candidate Bloch--Beilinson filtration, that is split.
	\end{thm2}
	
	Here, a \emph{self-dual} Chow--K\"unneth decomposition means a
	Chow--K\"unneth decomposition $\{\pi_X^i, 0 \leq i \leq 2d\}$ such
	that $\pi_X^{2d-i}$ is the transpose of $\pi^i_X$ for all $i$. The
	self-duality assumption on $\{\pi_X^i\}$ is important because it
	ensures, together with the multiplicativity assumption, that the classes of the
	several diagonals in $X^3$ belong to
	$\CH^*(X^3)_0$, which is 
	crucial
	for
	checking that 
	the assumptions of
	Proposition \ref{prop multCK blow-up} are met for $X^3$ and its diagonals. 
	We also
	take the trouble of showing that a blow-up admits a \emph{self-dual}
	Chow--K\"unneth decomposition (Proposition \ref{prop self-dualCK}) essentially
	because we will have to blow up $X^3$ several times and at each step
	self-duality will be required. Ultimately we find that
	$X^{[3]}$ admits a
	multiplicative Chow--K\"unneth decomposition that is \emph{self-dual}
	and this makes it possible to iterate the process, \emph{e.g.} to show that
	$(X^{[3]})^{[3]}$ also has a multiplicative Chow--K\"unneth
	decomposition. 
	
	Beyond the case of Hilbert schemes of points, one may consider \emph{nested
		Hilbert schemes}. Given a projective variety $X$ and positive integers
	$n_1<\cdots < n_l$, the nested Hilbert scheme $X^{[n_1,\ldots, n_l]}$ is the
	projective scheme consisting of
	$\{(x_1,\ldots, x_l) : x_1 \subset \cdots \subset x_l \} \subset  X^{[n_1]}
	\times \cdots \times X^{[n_l]}$.
	Cheah \cite{cheah} showed that for a smooth projective variety $X$ of dimension
	$\geq 3$, the
	nested Hilbert scheme  $X^{[n_1,\ldots, n_l]}$ is smooth if and only if it is
	one of $X^{[1,2]}$ or $X^{[2,3]}$. In Theorem \ref{thm mainnested}, we establish
	the analogue of Theorem \ref{thm main} in those cases, by showing that
	$X^{[1,2]}$ or $X^{[2,3]}$ admit a
	self-dual multiplicative Chow--K\"unneth decomposition with Chern classes
	belonging to the graded-$0$ part of the Chow ring,
	whenever $X$ has a
	self-dual multiplicative Chow--K\"unneth decomposition with $c_p(X) \in
	\CH^p(X)_0$ for all $p \geq 0$.
	\medskip
	
	Together with \cite[Theorem 6]{sv}, \cite[Theorem
	1]{vial} and \cite[Theorem 1.7]{ftv}, we then obtain
	a large class of varieties admitting a multiplicative Chow--K\"unneth
	decomposition\,:
	\begin{thm2} \label{thm2 CK} Let $E$ be the smallest subset of smooth
		projective varieties that contains varieties with Chow groups of
		finite rank (as $\Q$-vector spaces), abelian varieties, generalized Kummer varieties, and Hilbert schemes of
		length-$n$ subschemes of hyperelliptic curves, K3 surfaces, or abelian surfaces,
		and that is
		stable under the following operations\,:
		\begin{enumerate}[(i)]
			\item if $X$ and $Y$ belong to $E$, then $X\times Y \in E$\,;
			\item if $X$ belongs to $E$, then $\PP(\mathscr{T}_X) \in E$, where
			$\mathscr{T}_X$ is the tangent bundle of $X$\,;
			\item if $X$ belongs to $E$, then the Hilbert scheme of length-$2$
			subschemes $X^{[2]}$ belongs to $E$\,;
			\item if $X$ belongs to $E$, then the Hilbert scheme of length-$3$
			subschemes $X^{[3]}$ belongs to $E$\,;
			\item  if $X$ belongs to $E$, then the nested Hilbert schemes  $X^{[1,2]}$ and
			$X^{[2,3]}$ belong to $E$.
		\end{enumerate}
		If $X$ be a smooth projective variety that is isomorphic to a
		variety in $E$, then $X$ admits a self-dual multiplicative Chow--K\"unneth
		decomposition.
	\end{thm2}
	
	When the ground field $k$ has characteristic zero, Riess \cite{riess} showed
	that birational hyperK\"ahler varieties have isomorphic Chow rings and
	isomorphic Chow motives as algebras in the category of Chow motives. Therefore,
	one may add to the set $E$ all hyperK\"ahler varieties that are birational to
	the Hilbert scheme of of length-$n$
	subschemes on a K3 surface.
	In fact, by the following conjecture which is directly inspired from Beauville's
	splitting conjecture \cite{beauville1} and which we stated in \cite{sv, vial},
	we expect the  set $E$ of Theorem \ref{thm2 CK} to contain all hyperK\"ahler
	varieties.
	
	\begin{conj2}[\cite{sv, vial}]
		Every hyperK\"ahler variety $X$ admits a self-dual multiplicative
		Chow--K\"unneth
		decomposition such that its Chern classes lie in $\CH^*(X)_0$.
	\end{conj2}

	As a corollary to Theorem \ref{thm2 CK}, one may state decomposition theorems
	for families of
	varieties that belong to the set $E$ described in Theorem \ref{thm2
		CK} as those first stated by Voisin \cite{voisin k3} for families of
	K3 surfaces. One may also consult \cite{vial} for the case of the
	relative Hilbert scheme of length-$n$ subschemes on a family of K3
	surfaces.\medskip

	\textbf{Outline.} We start in section 1 by showing that the
	property of having a \emph{self-dual} Chow--K\"unneth decomposition is stable
	under the following four operations\,: product, projective bundle, blow-up, and
	finite quotient by a group action. In section 2, we show under suitable
	conditions (mostly concerning Chern classes) that the property of having a
	self-dual \emph{multiplicative} Chow--K\"unneth decomposition is stable under
	the same four operations. Propositions \ref{prop projbundleselfdualmult} and
	\ref{prop multCK blow-up} complement \cite[Propositions 13.1 \& 13.2]{sv} where
	the self-duality assumption was overlooked. We then want to use these general
	results to show that a certain desingularization of the map $X^3 \dashrightarrow
	X^{[3]}$ has a self-dual multiplicative Chow--K\"unneth decomposition. Since one
	needs to blow up $X^3$ several times in order to resolve the map $X^3
	\dashrightarrow X^{[3]}$, it is convenient to state a proposition that takes
	care of the centers of the successive blow-ups all at once, so that we don't
	have to check the assumptions of Proposition \ref{prop multCK blow-up} at each
	step. This is the content of section 3 and the main result there is Proposition
	\ref{prop admissible blow-up}. In section 4, we resolve explicitly the rational
	map $X^3
	\dashrightarrow X^{[3]}$ into a morphism $p : X_3 \rightarrow X^{[3]}$ by
	blowing up $X^3$ along subvarieties that are stable under the action of the
	symmetric group $\mathfrak{S}_3$ which permutes the factors, and we show that
	$p$
	factors through the quotient morphism and then contracts two irreducible
	divisors (these contractions are not smooth blow-up morphisms).
	In section 5, we equip $X_3$ with a $\mathfrak{S}_3$-invariant
	self-dual multiplicative Chow--K\"unneth decomposition, show that it descends
	along the resolution $p:X_3 \rightarrow X^{[3]}$, and prove Theorem \ref{thm
		main}. A technical difficulty consists in showing that the correspondence
	${}^t\Gamma_p \circ \Gamma_p$ sits in $\CH^{3d}(X_3\times X_3)_0$, which in turn
	requires the careful analysis, carried out in section 4, of the morphism $X_3
	\rightarrow X^{[3]}$. One should note that for our purpose of constructing
	idempotents in the ring of correspondences, it is not sufficient to show for
	example that $p^*p_*$ preserves the grading on $X_3$ to conclude\,; it is really
	necessary to work at the level of correspondences, which yet again adds a level
	of technicality. Finally, in Section 6, we prove the analogue of Theorem
	\ref{thm main} for the nested Hilbert schemes $X^{[1,2]}$ and $X^{[2,3]}$.

	For the sake of simplicity, we have not considered the question of constructing 
	a Chow--K\"unneth decomposition for $X^{[3]}$ if $X$ is only assumed to be
	endowed with a Chow--K\"unneth decomposition (not necessarily self-dual or
	multiplicative). 
	The main reason is that while Chow--K\"unneth idempotents are central in the
	ring of correspondences
	modulo numerical equivalence, this is far from being the case modulo rational
	equivalence. In fact, a correspondence commutes with the
	Chow--K\"unneth projectors if and only if it sits in the graded-$0$ part of the
	ring of correspondences (see Lemma \ref{lem corr grade 0}).
	Therefore if one assumes the
	Chow--K\"unneth decomposition to be multiplicative, then one can keep track of
	intersections and compositions of correspondences that sit in grade $0$, thus
	making the arguments simpler. Nevertheless, 
	our main theorem adds to the restricted class of varieties that can be endowed
	with a multiplicative Chow--K\"unneth decomposition. That such a property is
	stable by the not-so-simple operation of taking the Hilbert cube is intriguing. 
	\medskip
	
	\textbf{Conventions.}  Chow groups are always understood with rational
	coefficients. The Chow motive of a smooth projective variety $X$ is denoted
	$\mathfrak{h}(X)$. Following Voevodsky, we think of the theory of motives as a
	homology theory. Consequently, the functor $\mathfrak{h}$ is seen as a covariant
	functor, and correspondences act covariantly. Moreover, denoting $\mathds{1} :=
	\mathfrak{h}(\operatorname{Spec}\, k)$, we set the Tate twists to follow the
	convention that $\mathfrak{h}(\PP^1)= \mathds{1} \oplus \mathds{1}(1)$.\medskip
	
	\textbf{Acknowledgments.} We thank Kieran O'Grady and John Ottem for useful
	discussions concerning the map $X^3 \dashrightarrow X^{[3]}$. We are grateful to
	the referees for comments that have helped improve the exposition. Finally, we
	would like to thank one of the referees and
	Manfred Lehn for suggesting treating the cases of the nested Hilbert schemes
	$X^{[1,2]}$ and $X^{[2,3]}$.

	\section{Self-dual Chow--K\"unneth decompositions}
	\label{sec CK}
	
	\begin{defn}\label{def selfdual}
		Let $X$ be a smooth projective variety of dimension $d$ over a field $k$,
		endowed with a Chow--K\"unneth decomposition $\{\pi_X^i : 0 \leq i \leq 2d\}$. 
		The Chow--K\"unneth decomposition $\{\pi_X^i : 0 \leq i \leq 2d\}$ is said to be
		\emph{self-dual} if 
		$${}^t\pi^i = \pi^{2d-i} \ \in \CH^d(X\times X) \quad \mbox{for all } 0\leq i
		\leq 2d.$$
		Here, given a correspondence $\Gamma \in \CH^*(X\times X)$, the \emph{transpose}
		denoted ${}^t\Gamma$ is the correspondence obtained from $\Gamma$ by switching
		the two factors of $X\times X$.
	\end{defn}
	
	It is well-known that operations on varieties such as taking products,
	projective bundles and blow-ups preserve the property of having a
	Chow--K\"unneth decomposition.  In addition  to recalling these results (and to
	setting up notations along the way), we also give a sufficient condition for a
	Chow--K\"unneth decomposition to descend along a generically finite morphism.
	The principal goal of this section is to show that the property that the
	Chow--K\"unneth decompositions are \emph{self-dual} is also preserved under
	these
	operations as well as under the operation of taking a generically finite
	quotient.
	
	\subsection{Product varieties} 
	Recall that, given a smooth projective variety $X$ endowed with a
	Chow--K\"unneth decomposition $\{\pi_X^i\}$, we have defined in \eqref{eq
		grading} the graded pieces
	$$\CH^i(X)_s := (\pi_X^{2i-s})_*\CH^i(X).$$
	Let $X$ and $Y$ be smooth projective varieties
	endowed with Chow--K\"unneth decompositions  $\{\pi_X^i\}$ and $\{\pi_Y^i\}$,
	respectively.
	Thanks to the K\"unneth formula, the idempotents
	\begin{equation} \label{eq productCK}
	\pi_{X\times Y}^i := \sum_{i=i_1+i_2} \pi_{X}^{i_1} \otimes \pi_{Y}^{i_2}
	\end{equation} define a Chow--K\"unneth decomposition for the product variety $X
	\times Y$.  Here and throughout this work, given any two correspondences $\alpha
	\in \CH^*(X_1\times X_2)$ and $\beta \in \CH^*(Y_1\times Y_2)$, we define the
	correspondence 
	\begin{equation}\label{eq composition correspondences}
	\alpha \otimes \beta := p_{13}^*\alpha \cdot p_{24}^*\beta
	\quad \in \CH^*((X_1\times Y_1)\times (X_2\times Y_2)),
	\end{equation}
	where $p_{ij}$ is the
	projector from $X_1 \times Y_1 \times X_2 \times Y_2$ on the product of the
	$i^\mathrm{th}$ and $j^\mathrm{th}$ factors. 
	With the above product
	Chow--K\"unneth decomposition, we have
	\begin{equation}\label{eq product pull back}
	p_1^*\CH^p(X)_s\cdot p_2^*\CH^q(Y)_t \subseteq \CH^{p+q}(X\times Y)_{s+t}\,;
	\end{equation}
	see \cite[Proposition 8.7]{sv}.
	
	\begin{defn}\label{defn corr grade}
		A correspondence $\Gamma\in\CH^p(X\times Y)$ is said to be of \textit{pure
			grade} $s$ if 
		\[
		\Gamma\in \CH^p(X\times Y)_s.
		\]
		In particular, a morphism $f:X\rightarrow Y$ is of pure grade 0 if its graph is
		in $\CH^{d_Y}(X\times Y)_0$, where $d_Y=\dim Y$.
	\end{defn}
	
	\begin{rmk}\label{rmk prod CK}
		The correspondence $\Gamma\in \CH^p(X\times Y)$ is of pure grade $s$  if and
		only if
		\[
		(\pi_X^i\otimes \pi_Y^j)_*\Gamma = 0 \quad \mbox{for all pairs} \ i+j\neq 2p-s.
		\]
		Indeed, $\Gamma\in \CH^p(X\times Y)$ is of pure grade $s$ if and only
		if $(\pi_{X\times Y}^k)_* \Gamma = 0$ for any $k\neq 2p-s$, and then use
		definition \eqref{eq productCK}.
	\end{rmk}
	
	Now we turn to self-dual Chow--K\"unneth decompositions. The following
	proposition is clear\,:
	\begin{prop} \label{prop self-dualproduct}
		Assume that $X$ and $Y$ are endowed with the action of a finite group $G$.
		If $\{\pi_X^i\}$ and $\{\pi_Y^i\}$ are $G$-invariant and self-dual, then
		$\{\pi_{X\times Y}^i\}$ is $G$-invariant and self-dual.\qed
	\end{prop}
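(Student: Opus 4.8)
The plan is to unwind the two assertions ($G$-invariance and self-duality of the product decomposition) directly from the defining formula \eqref{eq productCK} for $\pi_{X\times Y}^i$, since both properties are formal consequences of the analogous properties of the factors. The only genuine ingredient is the interaction of the transpose operation with the external tensor product \eqref{eq composition correspondences}.

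First I would record the key compatibility
\[
{}^t(\alpha \otimes \beta) = {}^t\alpha \otimes {}^t\beta
\]
for correspondences $\alpha \in \CH^*(X\times X)$ and $\beta \in \CH^*(Y\times Y)$. This follows by tracking the factor-swapping isomorphism through the definition $\alpha \otimes \beta = p_{13}^*\alpha \cdot p_{24}^*\beta$: switching the two copies of $X\times Y$ permutes the four factors of $(X\times Y)\times(X\times Y)$ in a way that simultaneously transposes $\alpha$ on the $X$-factors and $\beta$ on the $Y$-factors, while respecting the intersection product. With this identity in hand, self-duality of the product follows by a short computation:
\[
{}^t\pi_{X\times Y}^i
= \sum_{i=i_1+i_2} {}^t\pi_X^{i_1}\otimes {}^t\pi_Y^{i_2}
= \sum_{i=i_1+i_2} \pi_X^{2d_X-i_1}\otimes \pi_Y^{2d_Y-i_2}
= \pi_{X\times Y}^{2(d_X+d_Y)-i},
\]
where $d_X = \dim X$, $d_Y = \dim Y$, and the middle equality uses the self-duality of $\{\pi_X^{i}\}$ and $\{\pi_Y^{i}\}$; the last equality is the reindexing $(2d_X-i_1)+(2d_Y-i_2) = 2(d_X+d_Y)-i$, which ranges over exactly the pairs summing to $2(d_X+d_Y)-i$.

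For the $G$-invariance, I would argue that the action of $g \in G$ on $X\times Y$ is the product of its actions on the two factors, so that conjugation by its graph carries $\pi_X^{i_1}\otimes \pi_Y^{i_2}$ to $(g\cdot \pi_X^{i_1})\otimes(g\cdot \pi_Y^{i_2})$; invariance of each factor's decomposition then gives back the same summand, and hence $\pi_{X\times Y}^i$ is fixed. There is no real obstacle here—the proposition is stated as ``clear'' precisely because every step is formal—so the only point warranting care is the transpose-versus-tensor identity above, which is the single place where one must be attentive to the bookkeeping of the four factors. Everything else is reindexing.
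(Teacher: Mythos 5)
Your argument is correct and is exactly the formal verification the paper omits (the proposition is stated as ``clear'' with no proof): the identity ${}^t(\alpha\otimes\beta)={}^t\alpha\otimes{}^t\beta$, the reindexing of the sum in \eqref{eq productCK}, and the compatibility of conjugation by $\Gamma_g$ with the tensor product of correspondences are precisely the steps being taken for granted. No gap.
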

	
	The following lemma is a criterion for a correspondence to be of pure grade.
	
	\begin{lem}\label{lem corr grade 0}
		Let $X$ and $Y$ be smooth projective varieties endowed with self-dual
		Chow--K\"unneth
		decompositions $\{\pi^i_X : 0\leq i \leq 2d_X\}$  and $\{\pi_Y^j : 0\leq j\leq
		2d_Y\}$, where $d_X=\dim X$ and $d_Y=\dim Y$. Let
		$\Gamma\in\CH^p(X\times Y)$ be a correspondence. Then $\Gamma$ is of pure
		grade $s$, namely $\Gamma\in\CH^p(X\times Y)_s$, if and only if it satisfies
		\[
		\pi_Y^i \circ \Gamma = \Gamma\circ \pi_X^{2(d_X-p)+s+i}, \qquad\text{for all
		}i.
		\]
		In particular, a self-correspondence $\Gamma\in\CH^{d_X}(X\times X)$ is of pure
		grade 0 if and only if it commutes with the
		Chow--K\"unneth projectors $\pi_X^i$, namely
		\[
		\pi^i_X\circ \Gamma = \Gamma\circ \pi^i_X, \quad \mbox{for all } 0\leq i \leq
		2d_X.
		\]
	\end{lem}
	
	\begin{proof}
		By Remark \ref{rmk prod CK},
		if the correspondence $\Gamma$ lies in $\CH^d(X\times Y)_s$
		then
		\begin{align*}
		\pi_Y^i\circ \Gamma\circ \pi^{2(d_X-p)+s+i}_X &=
		({}^t\pi^{2(d_X-p)+s+i}_X\otimes\pi^i_Y)_*\Gamma \\
		& =(\pi^{2p-s-i}_X\otimes \pi^i_Y)_*\Gamma\\
		& = \left( (\sum_{j=0}^{2d_X}\pi_X^j)\otimes \pi_Y^i\right)_*\Gamma\\
		& = (\Delta_X\otimes \pi^i_Y)_*\Gamma\\
		& = \pi^i_Y\circ \Gamma,
		\end{align*}
		for all $0\leq i \leq 2d_Y$. Note that here we used the fact that $\{\pi^i_X\}$
		is
		self-dual in an essential way. A similar computation yields
		\begin{equation*}
		\pi^i_Y\circ \Gamma\circ \pi_X^{2(d_X-p)+s+i}  = \left(\pi^{2p-s-i}_X\otimes
		(\sum_{j=0}^{2d_Y}\pi_Y^j)\right)_*\Gamma = (\pi^{2p-s-i}_X\otimes
		\Delta_Y)_*\Gamma
		=\Gamma\circ \pi^{2(d_X-p)+s+i}_X.
		\end{equation*}
		Thus $\pi_Y^i\circ \Gamma =\Gamma\circ \pi_X^{2(d_X-p)+s+i}$. Conversely, if the
		above equality holds, then
		\[
		(\pi^{2p-s-j}_X\otimes \pi_Y^i)_*\Gamma = \pi^i_Y\circ \Gamma \circ
		\pi^{2(d_X-p)+s+j}_X =
		\pi^i_Y\circ \pi_Y^j\circ \Gamma =0
		\]
		for all $i\neq j$. By Remark \ref{rmk prod CK}, $\Gamma$ lies in $\CH^p(X\times
		Y)_s$.
	\end{proof}
	
	The action of a correspondence of pure grade shifts the grading.
	\begin{prop}\label{prop action of correspondence}
		Let $X$, $Y$, $X'$ and $Y'$ be smooth projective varieties with self-dual 
		Chow--K\"unneth decomposition. Then the following are true.
		\begin{enumerate}[(i)]
			\item Let $Z$ be a smooth projective variety with a Chow--K\"unneth
			decomposition $\{\pi_Z^i\}$ which is not necessarily multiplicative or
			self-dual. Let $\Gamma\in\CH^p(X\times Y)$ be of pure grade $s$, i.e.
			$\Gamma\in\CH^p(X\times Y)_s$, then
			\[
			( \Gamma\otimes \mathrm{id})_*:\CH^l(X\times Z)_r \rightarrow
			\CH^{l+p-d_X}(Y\times Z)_{r+s}.
			\]
			In particular, the pull-back and push-forward by a morphism of pure grade 0
			preserves the gradings on the Chow groups.
			\item If $\Gamma\in\CH^*(X\times Y)$ is of pure grade $s$ and and
			$\Gamma'\in\CH^*(Y\times Y')$ is of pure grade $s'$, then
			$\Gamma'\circ\Gamma\in\CH^*(X\times Y')$ is of pure grade $s+s'$.
			\item If $\Gamma\in\CH^*(X\times Y)$ and $\Gamma'\in \CH^*(X'\times Y')$ are of
			pure grades $s$ and $s'$ respectively, then $\Gamma\otimes \Gamma'\in\CH(X\times
			X'\times Y\times Y')$ is of pure grade $s+s'$.
		\end{enumerate}
	\end{prop}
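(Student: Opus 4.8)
The plan is to prove all three parts by reducing everything to the commutation criterion established in Lemma \ref{lem corr grade 0}, since that lemma converts the grading condition into a purely algebraic statement about composing with the Chow--K\"unneth projectors. For part (ii), suppose $\Gamma \in \CH^*(X\times Y)_s$ and $\Gamma' \in \CH^*(Y\times Y')_{s'}$. By the lemma, $\Gamma$ intertwines the projectors of $X$ and $Y$ with a fixed shift, and $\Gamma'$ intertwines those of $Y$ and $Y'$ with shift $s'$. The idea is to chain these two intertwining relations: write $\pi_{Y'}^i \circ \Gamma' = \Gamma' \circ \pi_Y^{j}$ for the appropriate index $j = 2(d_Y - p')+s'+i$, and then apply $\pi_Y^{j} \circ \Gamma = \Gamma \circ \pi_X^{j'}$ for the next shifted index. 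Composing, one finds $\pi_{Y'}^i \circ (\Gamma'\circ\Gamma) = (\Gamma'\circ\Gamma)\circ \pi_X^{j'}$ with the total shift being $s+s'$, and then the converse direction of Lemma \ref{lem corr grade 0} concludes that $\Gamma'\circ\Gamma$ has pure grade $s+s'$. The only delicacy is bookkeeping the degree indices through the two compositions, which is routine.

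For part (i), I would realize the action $(\Gamma\otimes\mathrm{id})_*$ on $\CH^l(X\times Z)_r$ as a composition of correspondences and invoke part (ii) together with the grading behaviour of the product decomposition recorded in \eqref{eq product pull back}. Concretely, an element of $\CH^l(X\times Z)_r$ may be viewed as a correspondence from a point (or from $X$, after suitable identification), and $\Gamma\otimes\mathrm{id}_Z$ is itself of pure grade $s$ by part (iii) applied to $\Gamma$ and the identity $\Delta_Z$; note $\Delta_Z$ has pure grade $0$ with respect to any self-dual decomposition, and more care is needed since $Z$ is only assumed to carry a (not necessarily self-dual) Chow--K\"unneth decomposition. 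The cleanest route is to treat the variable $Z$ as an inert spectator: tensoring the intertwining relation for $\Gamma$ by $\mathrm{id}_Z$ and using that the product projectors on $Y\times Z$ and $X\times Z$ factor as $\pi_Y^a\otimes\pi_Z^b$, one checks directly via Remark \ref{rmk prod CK} that $(\Gamma\otimes\mathrm{id})_*$ raises the bigrading index by $s$ in the $X$-to-$Y$ direction while leaving the $Z$-grading untouched, hence lands in $\CH^{l+p-d_X}(Y\times Z)_{r+s}$. The final sentence about morphisms of pure grade $0$ is the special case $s=0$ with $\Gamma$ the graph of the morphism.

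For part (iii), the tensor product $\Gamma\otimes\Gamma'$ is by definition \eqref{eq composition correspondences} an intersection of two pullbacks, and the statement that its grade is $s+s'$ should follow from the multiplicativity of the bigrading under the product Chow--K\"unneth decomposition, i.e.\ from \eqref{eq product pull back} applied on the product $(X\times X')\times(Y\times Y')$. The plan is to express the product projector $\pi_{(X\times X')\times(Y\times Y')}^k$ in terms of $\pi_X, \pi_{X'}, \pi_Y, \pi_{Y'}$ using \eqref{eq productCK} twice, and then to verify the vanishing condition of Remark \ref{rmk prod CK} for $\Gamma\otimes\Gamma'$ by separating the factors; the pure-grade hypotheses on $\Gamma$ and $\Gamma'$ kill all the off-diagonal contributions, leaving only the total shift $s+s'$.

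The main obstacle I anticipate is not any single computation but the careful tracking of the degree indices and of the \emph{self-duality} hypothesis, which, as the proof of Lemma \ref{lem corr grade 0} emphasizes, is used in an essential way to pass between $\pi_X^{2p-s-i}$ and ${}^t\pi_X^{2(d_X-p)+s+i}$. In part (i) the subtlety is that $Z$ is allowed to have a non-self-dual decomposition, so I must avoid applying the commutation criterion on the $Z$-factor and instead argue through Remark \ref{rmk prod CK} directly; this is where I would be most careful to ensure the argument does not secretly require self-duality of $\pi_Z^\bullet$.
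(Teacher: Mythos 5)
Your proposal is correct and follows essentially the same route as the paper: apply the commutation criterion of Lemma \ref{lem corr grade 0} to $\Gamma$, tensor the resulting intertwining relation with $\mathrm{id}_Z$, and check the vanishing condition of Remark \ref{rmk prod CK} directly on the $Z$-factor (the paper likewise only treats the $Z$-factor through the product projectors, never invoking self-duality of $\pi_Z^\bullet$), with (ii) and (iii) obtained by the same chaining and factor-separation arguments that the paper dismisses as ``proved similarly.'' Your index bookkeeping for (ii) comes out to the correct total shift $s+s'$, and your flagged concern about $Z$ not being self-dual is exactly the right subtlety, resolved exactly as in the paper.
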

	
	\begin{proof}
		To prove (\textit{i}), we first apply Lemma \ref{lem corr grade 0} and get
		$\pi_Y^i\circ \Gamma = \Gamma\circ \pi_X^{2(d_X-p)+s+i}$. After tensoring with
		the identity correspondence of $Z$ we get
		\[
		(\pi_Y^i\otimes \pi_Z^j) \circ (\Gamma\otimes \mathrm{id}_Z) = (\Gamma \otimes
		\mathrm{id}_Z) \circ (\pi_X^{2(d_X-p)+s+i}\otimes \pi_Z^j).
		\]
		If $\alpha \in \CH^l(X\times Z)_r$, then the above identity implies
		\begin{equation*}
		(\pi_Y^i\otimes \pi_Z^j)_*(\Gamma\otimes \mathrm{id}_Z)_* \alpha  =
		(\Gamma\otimes \mathrm{id}_Z)_* (\pi_X^{2(d_X-p)+s+i}\otimes \pi_Z^j)_* \alpha =
		(\Gamma\otimes\mathrm{id}_Z)_*0 = 0,
		\end{equation*}
		for all $2(d_X-p)+s+i +j \neq 2l-r$. This precisely means that
		\[
		(\Gamma\otimes \mathrm{id}_Z)_*\alpha \in \CH^{l+p-d_X}(Y\times Z)_{r+s}.
		\]
		Statements (\textit{ii}) and (\textit{iii}) are proved similarly using Lemma
		\ref{lem corr grade 0}.
	\end{proof}
	
	Note that a Chow--K\"unneth decomposition being self-dual implies that the
	diagonal is of pure grade 0. One consequence of this fact is the following
	proposition.
	\begin{cor}\label{cor projections grade 0}
		Let $X$ and $Y$ be two smooth projective varieties endowed with self-dual
		Chow--K\"unneth decompositions. Then the two natural projections $p_1:X\times
		Y\rightarrow X$ and $p_2: X\times Y\rightarrow Y$ are of pure grade 0.
		Furthermore, 
		\[
		p_{1_*} \CH^p(X\times Y)_s\subseteq \CH^{p-d_Y}(X)_s,\qquad
		p_{2_*}\CH^p(X\times Y)_s\subseteq \CH^{p-d_X}(Y)_s.
		\]
	\end{cor}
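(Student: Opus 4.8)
The plan is to realise each projection as a tensor product of correspondences that are individually of pure grade $0$, and then to read off both assertions directly from Proposition \ref{prop action of correspondence}. Throughout I regard the one-point variety $\Spec k$ as equipped with its (trivially self-dual) Chow--K\"unneth decomposition $\pi^0 = \Delta_{\Spec k}$, so that Proposition \ref{prop action of correspondence} may be applied with some of its four factors equal to a point.

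First I would record the two grade-$0$ building blocks. The diagonal $\Delta_X \in \CH^{d_X}(X\times X)$ is of pure grade $0$: as the identity self-correspondence it commutes with every projector $\pi_X^i$, so this is immediate from the last assertion of Lemma \ref{lem corr grade 0} (this is precisely where self-duality is used). Next I would check that the structure morphism $a_Y : Y \to \Spec k$ is of pure grade $0$, i.e. that the fundamental class $[Y] \in \CH^0(Y\times \Spec k) = \CH^0(Y)$ lies in $\CH^0(Y)_0$. By Remark \ref{rmk prod CK} this amounts to $(\pi_Y^j)_*[Y] = 0$ for all $j \neq 0$; since $\pi_Y^j$ acts on $\HH^*(Y_{\bar k},\Q_\ell)$ as the projector onto $\HH^j$, the class $(\pi_Y^j)_*[Y]$ has trivial image under the cycle class map for $j\neq 0$, and as the latter is injective on $\CH^0$ this class vanishes.

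With these inputs, the key step is the identification $\Gamma_{p_1} = \Delta_X \otimes [Y]$ in $\CH^{d_X}\big((X\times Y)\times X\big)$, where on the right $\Delta_X \in \CH^{d_X}(X\times X)$ and $[Y]\in\CH^0(Y\times\Spec k)$. Unwinding the definition \eqref{eq composition correspondences} of the tensor product, $p_{13}^*\Delta_X$ cuts out the locus where the two $X$-coordinates agree while $p_{24}^*[Y]$ imposes no condition, and the product is exactly the graph $\{((x,y),x)\}$ of $p_1$. Proposition \ref{prop action of correspondence}(iii) then gives that $\Gamma_{p_1}$ is of pure grade $0+0 = 0$, so $p_1$ is of pure grade $0$; the symmetric identification $\Gamma_{p_2} = [X]\otimes \Delta_Y$ handles $p_2$.

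Finally, the push-forward inclusions follow from the ``in particular'' clause of Proposition \ref{prop action of correspondence}(i) applied to the grade-$0$ morphisms $p_1$ and $p_2$ (taking $Z=\Spec k$), once one tracks the codimension shift under proper push-forward: for $\alpha\in\CH^p(X\times Y)_s$ one has $p_{1_*}\alpha \in \CH^{p-d_Y}(X)$ on dimension grounds, and grade preservation upgrades this to $p_{1_*}\alpha\in\CH^{p-d_Y}(X)_s$, and symmetrically for $p_2$. I expect the only point needing genuine (if routine) care to be the grade-$0$-ness of the structure morphism, namely that $[Y]\in\CH^0(Y)_0$, since it is the one place where compatibility of the decomposition with the cycle class map is invoked; everything else is a matter of matching the conventions for the tensor product of correspondences and citing the already-established Proposition \ref{prop action of correspondence}.
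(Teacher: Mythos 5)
Your proof is correct and follows essentially the same route as the paper's: the paper writes $\Gamma_{p_1}=p_{13}^*\Delta_X$ and invokes \eqref{eq product pull back} together with the fact that $\Delta_X\in\CH^{d_X}(X\times X)_0$, which is the same computation as your identification $\Gamma_{p_1}=\Delta_X\otimes[Y]$ combined with Proposition \ref{prop action of correspondence}, and the push-forward statements are deduced from that proposition in both arguments. Your explicit verification that $[Y]\in\CH^0(Y)_0$ makes precise a point the paper leaves implicit, but it does not change the nature of the argument.
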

	\begin{proof}
		Note that $\Gamma_{p_1}=p_{13}^*\Delta_X\in\CH^{d_X}(X\times Y\times X)$, where
		$p_{13}:X\times Y\times X\rightarrow X\times X$ is the projection onto the
		product of the first and the third factors. By \eqref{eq product pull back} and
		the fact that $\Delta_X\in\CH^{d_X}(X\times X)_0$, we conclude that
		$\Gamma_{p_1}\in\CH^{d_X}(X\times Y\times X)_0$, namely $p_1$ is of pure grade
		0. One similarly shows that $p_2$ is also of pure grade 0. The action on Chow
		groups follows immediately from Proposition \ref{prop action of correspondence}.
	\end{proof}

	\subsection{Projective bundles} \label{sec selfdualprojbundle} 
	Let $X$ be a
	smooth projective variety of dimension $d$ and let $\mathscr{E}$ be a vector
	bundle on $X$ of rank $r+1$. Denote $\pi : \PP(\mathscr{E}) \rightarrow X$ the
	geometric projectivization of $\mathscr{E}$. We define
	\[
	\gamma_0 := c_r(\pi^*\mathscr{E}/\calO(-1)) = \sum_{i=0}^r
	\pi^*c_i(\mathscr{E})\xi^{r-i},
	\]
	where $\xi\in\CH^1(\PP(\mathscr{E}))$ is the first Chern class of $\calO(1)$\,;
	then
	$\gamma_0$ satisfies $\xi\cdot \gamma_0 = -\pi^*c_{r+1}(\mathscr{E})$. Consider
	the correspondence
	$$\gamma := \iota_*\gamma_0\in\CH^{2r+d}(\PP(\mathscr{E})\times
	\PP(\mathscr{E})),$$  where $\iota: \PP(\mathscr{E})
	\hookrightarrow \PP(\mathscr{E})\times \PP(\mathscr{E})$ is the diagonal
	embedding of $\PP(\mathscr{E})$\,; its  action consists in intersecting with
	$\gamma_0$. It has been known since Manin
	\cite{manin} that the Chow motive of $\PP(\mathscr{E})$ is isomorphic to
	$\bigoplus_{l=0}^{r} \mathfrak{h}(X)(l)$. In fact an isomorphism is given by the
	correspondence  (see the proof of Proposition \ref{prop self-dual projbundle})
	$$\Phi :=
	\left(\bigoplus_{l=0}^{r-1} h^{l}
	\circ {}^t\Gamma_{\pi}\right) \oplus \gamma\circ {}^t\Gamma_{\pi}\ : \ 
	\left(\bigoplus_{l=0}^{r-1} \mathfrak{h}(X)(l)\right)\oplus \mathfrak{h}(X)(r)
	\longrightarrow
	\mathfrak{h}(\PP(\mathscr{E})),$$ 
	which induces an isomorphism\,:
	\begin{equation*}
	\Phi_* :   \bigoplus_{l=0}^{r}
	\CH^{p-l}(X) \overset{\cong}{\longrightarrow}
	\CH^p(\PP(\mathscr{E})), 
	\quad (\beta_0,\ldots,\beta_{r})\mapsto   \sum_{l=0}^{r-1} \xi^l\cdot
	\pi^*\beta_l + \gamma_0\cdot\pi^*\beta_r.
	\end{equation*}
	Here, for a morphism of varieties $g : V \rightarrow W$ we denote
	$\Gamma_g \in \CH_{\dim V}(V\times W)$ the class of the graph $\{(x,g(x)) : x
	\in V\}$, and $h^l \in \CH^{d+r+l}(\PP(\mathscr{E}) \times
	\PP(\mathscr{E}))$ the push-forward of $\xi^l\in \CH^{l}(\PP(\mathscr{E}))$
	under the diagonal embedding
	$\iota$ (so that $h^l$ is the $l^\mathrm{th}$ power of $h$ as a correspondence,
	rather than as an algebraic cycle). This isomorphism is not quite the usual
	projective bundle formula isomorphism $\bigoplus_{l=0}^{r}
	\CH^{p-l}(X) \overset{\cong}{\longrightarrow}
	\CH^p(\PP(\mathscr{E})), 
	(\beta_0,\ldots,\beta_{r})\mapsto   \sum_{l=0}^{r} \xi^l\cdot
	\pi^*\beta_l$. The usual projective bundle isomorphism suffices to get all
	results of this section. The reason for modifying the last summand is to make it
	compatible with the corresponding isomorphism for a smooth blow-up; see Section
	\ref{sec selfdualblowup}. This compatibility is needed in
	Remark \ref{rmk blow-up diagram degree 0} to get a commutative diagram. 
	
	Of course, if $X$ has a Chow--K\"unneth decomposition $\{\pi^i_X\}$, then the
	isomorphism $\Phi$ induces a Chow--K\"unneth decomposition for
	$\PP(\mathscr{E})$, namely  \begin{equation} \label{eq projbundleCK}
	p^i_{\PP({\mathscr{E}})} := \Phi
	\circ \left(\bigoplus_{j=0}^{r} \pi_X^{i-2j} \right)
	\circ \Phi^{-1}.
	\end{equation} However, a variety can be endowed with many different
	Chow--K\"unneth
	decompositions in general and Manin's isomorphism may not preserve certain
	properties of Chow--K\"unneth decompositions\,: if  $\{\pi^i_X\}$ is
	self-dual,
	then $\{p^i_{\PP({\mathscr{E}})}\}$ may not be self-dual. 
	The following proposition shows that if $X$ has a \emph{self-dual}
	Chow--K\"unneth decomposition, then  $\PP(\mathscr{E})$ has a \emph{self-dual}
	Chow--K\"unneth decomposition as well.

	\begin{prop} \label{prop self-dual projbundle} Let $X$ be a smooth projective
		variety endowed with the action of a finite group $G$. Assume that $X$ has a 
		$G$-invariant self-dual Chow--K\"unneth
		decomposition  $\{\pi_X^i\}$. If $\mathscr{E} \rightarrow X$ is a
		$G$-equivariant vector bundle on $X$ of rank $r+1$, then the geometric
		projectivization $\PP(\mathscr{E})$ has a  $G$-invariant self-dual
		Chow--K\"unneth
		decomposition $\{\pi_{\PP(\mathscr{E})}^i\}$. 
	\end{prop}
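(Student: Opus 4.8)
The plan is to refine Manin's isomorphism $\Phi$ so that the induced decomposition \eqref{eq projbundleCK} becomes self-dual; the obstruction is that the inclusions of the summands $\mathfrak{h}(X)(l)$ provided by $\Phi$ are not ``orthonormal'' for the transpose pairing. The inclusions in question are $h^l\circ{}^t\Gamma_\pi$ (for $l<r$) and $\gamma\circ{}^t\Gamma_\pi$. Since $h^l=\iota_*\xi^l$ and $\gamma=\iota_*\gamma_0$ are supported on the diagonal, they are symmetric, ${}^th^l=h^l$ and ${}^t\gamma=\gamma$, so the transposes of these inclusions act on Chow groups by $\alpha\mapsto\pi_*(\xi^l\cdot\alpha)$ and $\alpha\mapsto\pi_*(\gamma_0\cdot\alpha)$. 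If one defined the projections of \eqref{eq projbundleCK} by transposing the inclusions and pairing the $l$-th summand with the $(r-l)$-th, the resulting projection-after-inclusion would act as multiplication by $\pi_*(\xi^l\cdot\xi^{r-m})=s_{l-m}(\mathscr{E})$: this is the identity for $l=m$ and vanishes for $l<m$, but for $l>m$ it is a nonzero Segre-class correction (for instance $\pi_*(\gamma_0^2)=c_r(\mathscr{E})\neq 0$ lies off the anti-diagonal). This triangular discrepancy is exactly the failure of self-duality, and it is what must be removed.

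I would remove it by a Gram--Schmidt procedure. The pairing $\langle a,b\rangle:=\pi_*(a\cdot b)$ is a perfect $\CH^*(X)$-bilinear pairing on the free module $\CH^*(\PP(\mathscr{E}))=\bigoplus_{l=0}^r\CH^*(X)\cdot\xi^l$, and the Gram matrix of $\{1,\xi,\dots,\xi^r\}$ has entries $\langle\xi^a,\xi^b\rangle=s_{a+b-r}(\mathscr{E})$, which vanish for $a+b<r$ and equal $1$ for $a+b=r$; that is, it is anti-unitriangular. Hence there is a basis $\{P_0,\dots,P_r\}$ with $P_l=\xi^l+\sum_{k<l}\pi^*(\ast)\,\xi^k$ (same leading term, so cohomologically the standard projective-bundle generators) satisfying the anti-diagonal relations $\pi_*(P_a\cdot P_b)=\delta_{a+b,r}$, obtained by orthogonalizing against the anti-diagonal; the corrections are forced and polynomial in the Segre classes. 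Because $\mathscr{E}$ is $G$-equivariant its Segre classes are $G$-invariant and the construction is canonical, so each $P_l$ is $G$-invariant. Setting $\iota_l:=(\iota_*P_l)\circ{}^t\Gamma_\pi$ and $q_m:={}^t\iota_{r-m}$, the projection formula together with $\pi_*(P_a\cdot P_b)=\delta_{a+b,r}$ yields orthogonality $q_m\circ\iota_l=\delta_{ml}\,\mathrm{id}$ and completeness $\sum_l\iota_l\circ q_l=\mathrm{id}_{\PP(\mathscr{E})}$ (expand any $\alpha=\sum_m\pi^*\gamma_m\cdot P_m$ and use the pairing to recover $\gamma_l=\pi_*(P_{r-l}\cdot\alpha)$), so that $\{\iota_l,q_l\}$ exhibits a self-dual refinement of Manin's isomorphism $\mathfrak{h}(\PP(\mathscr{E}))\cong\bigoplus_{l=0}^r\mathfrak{h}(X)(l)$.

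I would then set $\pi^i_{\PP(\mathscr{E})}:=\sum_{l=0}^r\iota_l\circ\pi_X^{i-2l}\circ q_l$. That this is a $G$-invariant Chow--K\"unneth decomposition is inherited from $\{\pi_X^i\}$ via the orthogonality, completeness and $G$-equivariance just established, exactly as for \eqref{eq projbundleCK}. For self-duality, transposing and using ${}^tq_l=\iota_{r-l}$, ${}^t\iota_l=q_{r-l}$, together with the self-duality ${}^t\pi_X^{i-2l}=\pi_X^{2d-i+2l}$ of $X$, one finds
\[
{}^t\pi^i_{\PP(\mathscr{E})}=\sum_{l=0}^r\iota_{r-l}\circ\pi_X^{2d-i+2l}\circ q_{r-l}=\sum_{m=0}^r\iota_m\circ\pi_X^{2(d+r)-i-2m}\circ q_m=\pi^{2(d+r)-i}_{\PP(\mathscr{E})},
\]
where $d+r=\dim\PP(\mathscr{E})$; the reindexing $m=r-l$ matches the Tate twists precisely, which is the whole reason the self-dual basis was needed (note that the self-duality of $X$ enters only here, not in the orthogonality and completeness checks). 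Since $P_l$, $\Gamma_\pi$ and $\{\pi_X^i\}$ are all $G$-invariant, so is $\{\pi^i_{\PP(\mathscr{E})}\}$, as in Proposition \ref{prop self-dualproduct}.

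The main obstacle is the orthogonalization step and checking that it does the job: one must verify that the anti-unitriangular Gram matrix can be brought to anti-diagonal form over the graded ring $\CH^*(X)$ by a degree-preserving unitriangular change of basis keeping the leading terms $\xi^l$ (so that the decomposition still reduces to the K\"unneth projectors on cohomology), and then that the transpose-defined projections genuinely split the motive rather than merely agreeing with a splitting modulo homological equivalence. Tracking the Segre-class corrections through the projection formula---rather than simply invoking Manin's isomorphism---is where self-duality is won, and it is exactly this computation that \cite[Proposition 13.1]{sv} omitted.
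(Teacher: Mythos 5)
Your proposal is correct and is essentially the paper's own argument: the paper likewise identifies the obstruction as the anti-unitriangular Gram matrix $\sigma\circ{}^t\Phi\circ\Phi=\mathrm{id}+\eta$ (whose entries are exactly your Segre classes $\pi_*(\xi^a\cdot\xi^b)$, resp.\ $\pi_*(\gamma_0^2)=c_r(\mathscr{E})$ in the last slot) and kills it by a unitriangular, degree-preserving change of basis, the only difference being that instead of a Gram--Schmidt recipe it takes the canonical correction $\Psi=\Phi\circ(\mathrm{id}+\eta)^{-\frac{1}{2}}$, which automatically satisfies $\sigma\circ{}^t\Psi=\Psi^{-1}$ and settles at the level of correspondences the completeness issue you flag at the end. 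The one imprecision in your write-up is the claim that the corrections to the $P_l$ are \emph{forced}: the orthogonalization system is underdetermined (already for the defect-one equations there are fewer equations than unknowns), so a choice must be made; this does not endanger $G$-invariance, since any solution given by universal polynomials in the $G$-invariant Segre classes will do, and the paper's inverse square root is precisely one such canonical solution.
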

	\begin{proof}
		The correspondence ${}^t\Phi$ defines a morphism of motives 
		$${}^t\Phi : \mathfrak{h}(\PP(\mathscr{E})) \longrightarrow 
		\mathfrak{h}(X)(r)\oplus \mathfrak{h}(X)(r-1) \oplus\cdots\oplus
		\mathfrak{h}(X).$$
		Let
		$$\sigma : \mathfrak{h}(X)(r)\oplus \mathfrak{h}(X)(r-1)  \oplus \cdots\oplus
		\mathfrak{h}(X) \longrightarrow \mathfrak{h}(X)\oplus \mathfrak{h}(X)(1) \oplus
		\cdots \oplus \mathfrak{h}(X)(r)$$ 
		be the morphism that permutes the direct summands. This morphism is self-dual in
		the sense that
		$${}^t \sigma = \sigma.$$
		The correspondence $\Gamma_\pi \circ h^{l}\circ {}^t\Gamma_\pi$ vanishes for
		$l<r$ and is equal to $\Delta_X$ for $l=r$ (see for instance \cite[Section
		1]{vialfib}). The identity $\xi\cdot\gamma_0=-\pi^*c_{r+1}(\mathscr{E})$ implies
		that the correspondence $\Gamma_{\pi}\circ h^l\circ \gamma\circ
		{}^t\Gamma_{\pi}$ vanishes for $1\leq l\leq r$ and equals $\Delta_X$ for $l=0$.
		Hence
		\begin{align*}
		\Gamma_{\pi}\circ \gamma\circ \gamma\circ {}^t\Gamma_{\pi} & = \sum_{i=0}^r
		\Gamma_{\pi} \circ (\cdot \xi^i \pi^*c_{r-i}(\mathscr{E}))\circ \gamma\circ
		{}^t\Gamma_{\pi}\\
		& = \sum_{i=0}^r \Gamma_{\pi}\circ h^i\circ \gamma \circ {}^t\Gamma_{\pi}\circ
		(\cdot c_{r-i}(\mathscr{E}))\\
		& = \cdot c_r(\mathscr{E}).
		\end{align*}
		Here, we used the following notation. If $\alpha$ is a cycle in $\CH^p(Y)$, then
		$\cdot \alpha$ is the correspondence defined by
		$(\iota_Y)_*\alpha\in\CH^{p+d_Y}(Y\times Y)$ where $\iota_Y:Y\hookrightarrow
		Y\times Y$ is the diagonal embedding. Then it is easy to see that we have the
		identity 
		\begin{equation}\label{eq matrix 1}
		\sigma \circ {}^t\Phi \circ \Phi = \begin{pmatrix}
		\Delta_X & 0 &\cdots &0 &0\\
		0 &\Delta_X & \cdots &0 &0\\
		0 &\ast &\ddots &0 &0\\
		0 &\ast &\cdots &\Delta_X &0\\
		\cdot c_r(\mathscr{E}) &0 &\cdots &0 &\Delta_X
		\end{pmatrix}
		\end{equation}
		In other words, we can write
		$$\sigma \circ {}^t\Phi \circ \Phi =\mathrm{id} + \eta$$
		for a correspondence $\eta$ that is nilpotent of index $r$, that is, satisfies 
		$\eta^{\circ r} = 0$. By taking the transpose of the above equation, we get
		${}^t\Phi\circ\Phi\circ \sigma = \mathrm{id} + {}^t\eta$, and hence $\eta \circ
		\sigma = \sigma \circ {}^t\eta$. ($\eta$ should be thought of as a strict lower
		triangular matrix and $\sigma$ as being the operation of rotating the matrix by
		an angle of $\pi/2$.) We define the correspondence $$\Psi := \Phi
		\circ (\mathrm{id}+\eta)^{-\frac{1}{2}}.$$ Here, for a nilpotent element $x$ of
		order $r$ and for any real number
		$a$, we formally set $$(1+x)^a = 1+ax+ \frac{a(a-1)}{2}x^2 + \cdots +
		\frac{a(a-1)\cdots (a-r+2)}{(r-1)!}x^{r-1}.$$
		The correspondence $\Psi : \bigoplus_{l=0}^{r} \mathfrak{h}(X)(l)
		\longrightarrow
		\mathfrak{h}(\PP(\mathscr{E}))$ is an isomorphism\,; its inverse is
		$(\mathrm{id}+\eta)^{\frac{1}{2}} \circ \Phi^{-1}$. In fact we have $$\sigma
		\circ {}^t\Psi =
		\Psi^{-1},$$ because 
		$$\sigma \circ {}^t\Psi \circ \Psi
		= \sigma\circ (\mathrm{id} + {}^t\eta)^{-\frac{1}{2}}
		\circ
		{}^t \Phi \circ \Phi \circ (\mathrm{id} + \eta)^{-\frac{1}{2}} 
		= (\mathrm{id} + \eta)^{-\frac{1}{2}}
		\circ \sigma \circ
		{}^t \Phi \circ \Phi \circ (\mathrm{id} + \eta)^{-\frac{1}{2}} 
		= (\mathrm{id} + \eta)^{-\frac{1}{2}}\circ  (\mathrm{id} +
		\eta) \circ (\mathrm{id} +
		\eta)^{\frac{1}{2}} = \mathrm{id}.$$ It follows that 
		\begin{equation} \label{eq projbundleselfdualCK} \pi^i_{\PP({\mathscr{E}})} :=
		\Psi
		\circ \left(\bigoplus_{j=0}^{r} \pi_X^{i-2j} \right)
		\circ \Psi^{-1}
		\end{equation}
		defines a Chow--K\"unneth decomposition of $\PP(\mathscr{E})$ that is
		self-dual if $\{\pi_X^i\}$ is self-dual.
		Indeed, on the one hand, we have $${}^t\pi^i_{\PP({\mathscr{E}})} = \Psi \circ
		\sigma \circ (\pi_X^{2d-i} \oplus \ldots \oplus \pi_X^{2d+2r-i})\circ
		{}^t\Psi,$$ and on the other hand, we have $$\pi^{2d+2r-i}_{\PP({\mathscr{E}})}
		= \Psi \circ (\pi_X^{2d+2r-i} \oplus \ldots \oplus \pi_X^{2d-i}) \circ \sigma
		\circ {}^t\Psi,$$ and clearly $ \sigma \circ (\pi_X^{2d-i} \oplus \ldots \oplus
		\pi_X^{2d+2r-i}) = (\pi_X^{2d+2r-i} \oplus \ldots \oplus \pi_X^{2d-i}) \circ
		\sigma$.
		
		The Chow--K\"unneth decomposition \eqref{eq projbundleselfdualCK} is
		$G$-invariant if $\{\pi_X^i\}$
		is $G$-invariant because  $\Gamma_\pi$ is $G$-invariant (by assumption) and
		$h$ is $G$-invariant (the automorphism group of projective space preserves
		$\mathcal{O}(1)$). 
	\end{proof}
	
	\begin{rmk}\label{rmk projselfdual commutes with prod}
		The construction of $\pi_{\PP(\mathscr{E})}^i$ commutes with products. To be
		more precise, let $Y$ be any smooth projective variety with self-dual
		Chow--K\"unneth decomposition $\{\pi_Y^j\}$. The self-dual Chow--K\"unneth
		decomposition of $\PP(\mathscr{E})\times Y$, viewed as a projective bundle over
		$X\times Y$, agrees with the product Chow--K\"unneth decomposition.
	\end{rmk}

	\subsection{Blow-ups} \label{sec selfdualblowup}
	Let $X$ be a smooth projective variety and let $Y$ be smooth closed
	subvariety of codimension $r+1$ of $X$. It has been known since Manin
	\cite{manin} that the motive of the blow-up $\tilde{X}$ of $X$ along
	$Y$ can be expressed as $$\mathfrak{h}{(\tilde{X})} \cong
	\mathfrak{h}(X) \oplus \mathfrak{h}(Y)(1) \oplus \cdots \oplus
	\mathfrak{h}(Y)(r).$$ Consequently, if $X$ and $Y$ have a
	Chow--K\"unneth decomposition then so does $\tilde{X}$.
	\medskip
	
	The set-up for the proposition below is the following\,: $X$ is a
	smooth projective variety endowed with the action of a finite group
	$G$ and $i:Y\hookrightarrow X$ is a smooth closed subvariety of
	codimension $r+1$ such that $g\cdot Y=Y$ for all $g \in G$. The
	blow-up of $X$ along $Y$ is denoted $\tilde{X}$. We have the blow-up
	diagram
	\begin{equation}\label{eq blow-up diagram}
	\xymatrix{
		E\ar[r]^j\ar[d]_\pi & \tilde{X}\ar[d]^\rho\\
		Y\ar[r]^i & X
	}
	\end{equation}
	where $E\cong\PP(\mathscr{N}_{Y/X})$ is the exceptional divisor.

	\begin{prop} \label{prop self-dualCK} In the situation above, if $X$
		and $Y$ both have a  $G$-invariant self-dual Chow--K\"unneth
		decomposition, namely $\{\pi_X^i\}$ and $\{\pi_Y^i\}$, then
		$\tilde{X}$ has a $G$-invariant self-dual Chow--K\"unneth
		decomposition $\{\pi_{\tilde{X}}^i\}$.
	\end{prop}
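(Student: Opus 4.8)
The plan is to reduce the blow-up case to the projective bundle case already established in Proposition \ref{prop self-dual projbundle}, using the classical description of the motive of a blow-up as arising from a blow-up square. Recall that the exceptional divisor $E \cong \PP(\mathscr{N}_{Y/X})$ is a projective bundle over $Y$ of relative dimension $r$, and that Manin's decomposition
\[
\mathfrak{h}(\tilde{X}) \cong \mathfrak{h}(X) \oplus \mathfrak{h}(Y)(1) \oplus \cdots \oplus \mathfrak{h}(Y)(r)
\]
is realized by an explicit correspondence built from $\Gamma_\rho$, $\Gamma_j$, and the hyperplane class of $E$. Since $Y$ carries a $G$-invariant self-dual Chow--K\"unneth decomposition by hypothesis, Proposition \ref{prop self-dual projbundle} endows $E$ with a $G$-invariant \emph{self-dual} Chow--K\"unneth decomposition $\{\pi_E^i\}$, and the first thing I would do is fix this decomposition once and for all, taking care (as flagged in the text preceding Section \ref{sec selfdualblowup}) to use the \emph{modified} Manin isomorphism $\Psi$ for $E$ so that the last summand matches the blow-up conventions.

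\textbf{Constructing the isomorphism.} Next I would write down an explicit isomorphism
\[
\Theta : \mathfrak{h}(X) \oplus \mathfrak{h}(Y)(1) \oplus \cdots \oplus \mathfrak{h}(Y)(r) \longrightarrow \mathfrak{h}(\tilde{X})
\]
assembled from the morphisms ${}^t\Gamma_\rho$, the graph $\Gamma_j$ of the inclusion $E \hookrightarrow \tilde{X}$, and powers of the exceptional class. The components are all $G$-invariant because $\rho$, $j$, $Y$, and $E$ are all $G$-stable by hypothesis. I would then define the candidate Chow--K\"unneth projectors
\[
\pi_{\tilde{X}}^i := \Theta \circ \Big( \pi_X^i \oplus \bigoplus_{l=1}^{r} \pi_Y^{i-2l} \Big) \circ \Theta^{-1},
\]
which is automatically a $G$-invariant Chow--K\"unneth decomposition since $\{\pi_X^i\}$ and $\{\pi_Y^i\}$ are. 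The content of the proposition is therefore entirely in verifying \emph{self-duality}, i.e. ${}^t\pi_{\tilde{X}}^i = \pi_{\tilde{X}}^{2d-i}$.

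\textbf{The main obstacle: arranging self-duality.} Just as in the projective bundle case, the naive isomorphism $\Theta$ will not satisfy a clean self-duality relation of the form $\sigma \circ {}^t\Theta = \Theta^{-1}$, where $\sigma$ is the summand-reversing permutation; instead one expects $\sigma \circ {}^t\Theta \circ \Theta = \mathrm{id} + \eta$ for some nilpotent $\eta$, coming from the fact that the various structural correspondences (the Gysin maps for $j$ and $i$, the excess/self-intersection formula for the exceptional divisor) interact in a lower-triangular but not diagonal fashion. This is exactly the phenomenon handled in Proposition \ref{prop self-dual projbundle}. The hard part will be computing the relevant compositions $\Gamma_\rho \circ (\cdot) \circ {}^t\Gamma_\rho$ and the self-intersection contributions of $E$ explicitly enough to identify the nilpotent correction term and to verify the compatibility $\eta \circ \sigma = \sigma \circ {}^t\eta$. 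Once this is in hand, I would apply the \emph{same} symmetrization trick as in the projective bundle proof: set
\[
\Xi := \Theta \circ (\mathrm{id} + \eta)^{-\frac{1}{2}},
\]
interpreting $(1+x)^{-1/2}$ as its finite formal expansion since $\eta$ is nilpotent, and check that $\sigma \circ {}^t\Xi = \Xi^{-1}$ by the identical three-line computation used for $\Psi$. Replacing $\Theta$ by $\Xi$ in the definition of $\pi_{\tilde{X}}^i$ then yields, by the same bookkeeping with $\sigma$ intertwining the direct sums $\bigoplus_j \pi_Y^{\bullet}$ in reverse order, the desired self-duality ${}^t\pi_{\tilde{X}}^i = \pi_{\tilde{X}}^{2d-i}$. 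The $G$-invariance is preserved throughout because $\eta$, being built from $G$-invariant correspondences, is itself $G$-invariant, and hence so is $(\mathrm{id}+\eta)^{-1/2}$.
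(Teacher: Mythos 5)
Your strategy is the one the paper follows: take Manin's blow-up isomorphism $\Phi = {}^t\Gamma_\rho \oplus \bigoplus_{l=1}^{r}\Gamma_j\circ h^{l-1}\circ{}^t\Gamma_\pi$, conjugate the naive projectors by it, and then symmetrize via $\Psi := \Phi\circ(\mathrm{id}+\eta)^{-\frac{1}{2}}$. There is, however, one concrete point at which your proposal as written would fail: in the blow-up case the composition $\sigma\circ{}^t\Phi\circ\Phi$ is \emph{not} of the form $\mathrm{id}+\eta$ with $\eta$ nilpotent. By \cite[Lemma 5.2]{vialfib} (equation \eqref{eq tphiphi} in the paper) it is lower triangular with diagonal entries $\Delta_X, -\Delta_Y,\ldots,-\Delta_Y$; the minus signs come from the self-intersection of the exceptional divisor, since $h=-{}^t\Gamma_j\circ\Gamma_j$. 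One must therefore factor $\sigma\circ{}^t\Phi\circ\Phi = D\circ(\mathrm{id}+\eta)$ with $D$ the diagonal matrix with entries $\Delta_X,-\Delta_Y,\ldots,-\Delta_Y$, and the duality relation satisfied by $\Psi$ is $D\circ\sigma\circ{}^t\Psi=\Psi^{-1}$, not $\sigma\circ{}^t\Psi=\Psi^{-1}$. This does not derail the argument: $D^2=\mathrm{id}$, and $D$ commutes with $\sigma$, with $\eta$, and with the block-diagonal projector $\pi_X^i\oplus\bigoplus_{l=1}^{r}\pi_Y^{i-2l}$, so the two extra factors of $D$ cancel when one transposes $\pi^i_{\tilde X}$ and self-duality still follows. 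But the ``identical three-line computation'' you invoke from the projective-bundle case must be modified to carry $D$ along, and verifying the needed commutations ($\eta\circ D\circ\sigma = D\circ\sigma\circ{}^t\eta$, etc.) is precisely the content you deferred.

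A smaller remark: fixing a self-dual Chow--K\"unneth decomposition on $E$ at the outset is not needed for this proposition. Manin's isomorphism for the blow-up has summands $\mathfrak{h}(Y)(l)$, not $\mathfrak{h}(E)$, so only the decompositions of $X$ and $Y$ enter; the (modified) projective-bundle decomposition of $E$ only becomes relevant later, in Remark \ref{rmk blow-up diagram degree 0}, when one needs the inclusion $j:E\hookrightarrow\tilde X$ to be of pure grade $0$.
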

	\begin{proof}
		First note that by functoriality of blow-ups the action of $G$ on
		$X$ lifts to an action on $\tilde{X}$, and all maps involved in the
		diagram \eqref{eq blow-up diagram} are $G$-equivariant. Therefore,
		the corresponding push-forwards and pull-backs on Chow groups are
		also $G$-equivariant.
		
		By Manin \cite{manin}, the correspondence $$\Phi :=
		{}^t\Gamma_\rho \oplus \bigoplus_{l=1}^{r} \Gamma_{j} \circ h^{l-1}
		\circ {}^t\Gamma_{\pi} \ : \ \mathfrak{h}(X) \oplus
		\bigoplus_{l=1}^{r} \mathfrak{h}(Y)(l) \longrightarrow
		\mathfrak{h}(\tilde{X})$$ is an isomorphism of Chow motives. Here,
		$h := - {}^t\Gamma_j \circ \Gamma_j$\,; its action on $\CH_*(E)$
		consists in intersecting with the first Chern class of the relative
		$\mathcal{O}(1)$-bundle $\mathcal{O}_{\tilde{X}}(-E)|_E$. In fact,
		that $\Phi $ is an isomorphism is a consequence of Manin's identity
		principle coupled with the fact that $\Phi$ induces an isomorphism
		of Chow groups\,: the blow-up formula for Chow groups
		\cite[Proposition 6.7(e)]{fulton}
		\begin{equation}\label{eq Chow of blow-up}
		\Phi_* :  \CH^p(X) \oplus\left( \bigoplus_{l=1}^{r}
		\CH^{p-l}(Y)\right) \overset{\cong}{\longrightarrow}
		\CH^p(\tilde{X}), 
		\quad (\alpha, \beta_1,\ldots,\beta_{r})\mapsto \rho^*\alpha 
		+ j_*\left( \sum_{l=1}^{r} \xi^{l-1}\cdot \pi^*\beta_l \right),
		\end{equation}
		where $\xi := c_1(\mathcal{O}_{\tilde{X}}(-E)|_E) \in \CH^1(E)$. A
		Chow--K\"unneth decomposition for $\tilde{X}$ is then given by
		\begin{equation} \label{eq blowupCK} p^i_{\tilde{X}} := \Phi \circ
		\left( \pi_X^i \oplus \bigoplus_{j=1}^{r} \pi_Y^{i-2j} \right) \circ
		\Phi^{-1}.
		\end{equation}
		(The correspondence $\Phi^{-1}$ is described explicitly in
		\cite[Section 5]{vialfib}.) Since the diagram \eqref{eq blow-up
			diagram} is $G$-equivariant, it is apparent that $\Phi$ is
		$G$-invariant. By assumption, $\{\pi_X^i\}$ and $\{\pi_Y^i\}$ are
		$G$-invariant. It follows that $\{p_{\tilde{X}}^i\}$ is
		$G$-invariant.\medskip
		
		Let us now assume that $\{\pi_X^i\}$ and $\{\pi_Y^i\}$ are both
		self-dual. The Chow--K\"unneth decomposition $\{p_{\tilde{X}}^i\}$
		constructed above is not self-dual in general. Here is a way to make
		it self-dual while preserving its $G$-invariance. Let
		$$ \sigma : \mathfrak{h}(X) \oplus \mathfrak{h}(Y)(r) \oplus \cdots \oplus
		\mathfrak{h}(Y)(1) \longrightarrow \mathfrak{h}(X) \oplus \mathfrak{h}(Y)(1)
		\oplus \cdots \oplus \mathfrak{h}(Y)(r)$$ be the morphism that switches the
		summands. By \cite[Lemma 5.2]{vialfib}, the correspondence $\sigma \circ
		{}^t\Phi \circ \Phi$ from
		$\mathfrak{h}(X) \oplus \bigoplus_{i=1}^{r} \mathfrak{h}(Y)(i)$ to
		itself can be written in matrix form as a lower triangular
		matrix\,: 
		\begin{equation} \label{eq tphiphi}
		\sigma \circ {}^t\Phi \circ \Phi = \left( \begin{array}{ccccc}
		\Delta_X & 0 & 0 & \cdots & 0 \\
		0 & -\Delta_Y & 0 & \cdots & 0 \\
		0 & * & \ddots & \ddots & \vdots \\
		\vdots & \vdots & \ddots & \ddots & 0 \\ 0 & * & \cdots & * &
		-\Delta_Y
		\end{array} \right).
		\end{equation}
		Let us write $\sigma \circ {}^t\Phi \circ \Phi = D\circ (\mathrm{id} + \eta)$,
		where $D$ is the diagonal matrix with first diagonal entry $\Delta_X$
		and remaining diagonal entries $-\Delta_Y$. Note that $\sigma = {}^t \sigma$ and
		that $\sigma$ commutes with $D$. The correspondence $\eta$
		is clearly nilpotent of index $r$ and commutes with $D$\,; we also have $\eta
		\circ \sigma = \sigma \circ {}^t\eta$. We
		define $$\Psi := \Phi \circ (\mathrm{id} + \eta)^{-\frac{1}{2}}.$$
		Clearly, $\Psi$ is an
		isomorphism with inverse $(\mathrm{id}+\eta)^{\frac{1}{2}} \circ
		\Phi^{-1}$. We claim that in fact $$D\circ \sigma \circ {}^t\Psi = \Psi^{-1}.$$
		Indeed, we
		have $$D\circ \sigma \circ {}^t\Psi \circ \Psi = D\circ\sigma \circ (\mathrm{id}
		+ {}^t \eta)^{-\frac{1}{2}} \circ
		{}^t \Phi \circ \Phi \circ (\mathrm{id} + \eta)^{-\frac{1}{2}} =
		(\mathrm{id} + \eta)^{-\frac{1}{2}} \circ D\circ\sigma\circ\sigma\circ D \circ
		(\mathrm{id} +
		\eta)^{\frac{1}{2}} = \mathrm{id},$$ where we have used that $\eta \circ D\circ
		\sigma =D\circ \sigma \circ {}^t\eta$ and that ${}^t\Phi \circ \Phi =\sigma\circ
		D\circ (\mathrm{id} + \eta)$. Thus if one sets
		\begin{equation} \label{eq blowupselfdualCK} \pi^i_{\tilde{X}} := \Psi
		\circ \left( \pi_X^i \oplus \bigoplus_{j=1}^{r} \pi_Y^{i-2j} \right)
		\circ \Psi^{-1},
		\end{equation}
		then, as in the proof of Proposition \ref{prop self-dual projbundle}, we see
		that $\{\pi^i_{\tilde{X}}\}$ defines a self-dual Chow--K\"unneth
		decomposition for $\tilde{X}$. Moreover, it is $G$-invariant because
		the  maps involved in the
		diagram \eqref{eq blow-up diagram} are $G$-equivariant.
	\end{proof}
	
	\begin{rmk}\label{rmk nonconnected center}
		The proposition can be easily generalized to the case where $Y$ is a
		$G$-invariant disjoint union of smooth closed subvarieties of $X$.
		The construction of $\pi^i_{\tilde{X}}$ also commutes with taking product.
		Namely, if $X'$ is another smooth projective variety with self-dual
		Chow--K\"unneth decompositon, then $\tilde{X}\times X'$ is the blow-up of
		$X\times X'$ with center $Y\times X'$. Repeating the above construction in this
		setting, we obtain a self-dual Chow--K\"unneth decomposition on $\tilde{X}\times
		X'$ that agrees with the product Chow--K\"unneth decomposition. 
	\end{rmk}

	\subsection{Generically finite quotients}
	Consider a surjective morphism $f:V \rightarrow W$ of smooth
	projective varieties. The morphism $\Gamma_f : \mathfrak{h}(V)
	\rightarrow \mathfrak{h}(W)$ of motives is surjective and admits a
	section, say $s$. Thus $\mathfrak{h}(W)$ is isomorphic to the direct
	summand $\mathrm{Im}(s\circ \Gamma_f)$ of
	$\mathfrak{h}(V)$. Assume that $V$ has a Chow--K\"unneth decomposition
	$\{\pi^i_V\}$. Although it is true that the homology classes of the cycles
	$\Gamma_f\circ
	\pi_V^i \circ s$ give a K\"unneth decomposition for $V$ (because the
	idempotents $\pi^i_V$ are central modulo homological equivalence in
	the ring of self-correspondences of $V$), it is in general not true
	that the cycles $\Gamma_f\circ \pi_V^i \circ s$ define idempotents
	modulo rational equivalence. Thus finding a Chow--K\"unneth
	decomposition for $W$ is usually not a straightforward
	matter. However, Proposition \ref{prop self-dual CK generic finite} below gives
	a sufficient condition for a Chow--K\"unneth decomposition to descend along a
	generically finite morphism.

	\begin{prop}\label{prop self-dual CK generic finite}
		Let $p:X\rightarrow Y$ be a generically finite morphism between smooth
		projective varieties of dimension $d$. Assume that $X$ is endowed with a
		self-dual Chow--K\"unneth decomposition $\{\pi_X^i\}$. If
		${}^t\Gamma_p\circ\Gamma_p$ sits in $ \CH_d(X\times X)_0$, then $Y$ also has a
		self-dual Chow--K\"unneth decomposition given by
		\[
		\pi_Y^i = \frac{1}{N}\Gamma_p\circ \pi_X^i \circ {}^t\Gamma_p,\qquad 0\leq
		i\leq 2d,
		\]
		where $N$ is the degree of $p$. Furthermore the morphism $p$ is of pure grade 0.
	\end{prop}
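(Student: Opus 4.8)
The plan is to leverage two standard facts about a generically finite morphism of degree $N$ and then to let the grade-$0$ hypothesis do all the real work through Lemma \ref{lem corr grade 0}. First, since $p$ is dominant of degree $N$ between smooth projective varieties of the same dimension, one has the cycle-level identity $\Gamma_p \circ {}^t\Gamma_p = N\,\Delta_Y$ (this is just $p_*p^* = N\cdot\mathrm{id}$ realized at the level of correspondences). It follows formally that $e := \frac{1}{N}\,{}^t\Gamma_p \circ \Gamma_p \in \CH^d(X\times X)$ is idempotent, since $e\circ e = \frac{1}{N^2}\,{}^t\Gamma_p \circ(\Gamma_p\circ{}^t\Gamma_p)\circ\Gamma_p = \frac{1}{N}\,{}^t\Gamma_p\circ\Gamma_p = e$, and that it realizes $\mathfrak{h}(Y)$ as the direct summand $(\mathfrak{h}(X),e)$; I will also use the two auxiliary identities $\Gamma_p\circ e = \Gamma_p$ and $e\circ{}^t\Gamma_p = {}^t\Gamma_p$, both immediate from $\Gamma_p \circ {}^t\Gamma_p = N\Delta_Y$.

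Second, here is where the hypothesis enters. The assumption that $N e = {}^t\Gamma_p\circ\Gamma_p$ lies in $\CH_d(X\times X)_0 = \CH^d(X\times X)_0$ says, by Lemma \ref{lem corr grade 0} applied to the self-correspondence case, exactly that $e$ commutes with each Chow--K\"unneth projector, $e\circ\pi_X^i = \pi_X^i\circ e$ for all $i$. This single commutation relation is the engine behind every remaining verification.

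With these inputs I would verify the defining properties of $\{\pi_Y^i\}$ by direct manipulation. For idempotence and orthogonality I insert ${}^t\Gamma_p\circ\Gamma_p = Ne$ in the middle of $\pi_Y^i\circ\pi_Y^j$ and push $e$ past the $\pi_X$'s via the commutation relation, obtaining $\pi_Y^i\circ\pi_Y^j = \frac{1}{N}\Gamma_p\circ(\pi_X^i\circ\pi_X^j\circ e)\circ{}^t\Gamma_p$, which collapses to $\pi_Y^i$ when $i=j$ (using $\pi_X^i\circ\pi_X^i = \pi_X^i$ and $e\circ{}^t\Gamma_p = {}^t\Gamma_p$) and to $0$ when $i\neq j$. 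Completeness is immediate: $\sum_i\pi_Y^i = \frac{1}{N}\Gamma_p\circ\Delta_X\circ{}^t\Gamma_p = \frac{1}{N}\Gamma_p\circ{}^t\Gamma_p = \Delta_Y$. That $(\pi_Y^i)_*$ is the K\"unneth projector onto $\HH^i(Y_{\bar k},\Q_\ell)$ follows because $p^*$ is a degree-preserving injection with $p_*p^* = N\cdot\mathrm{id}$, so $(\pi_Y^i)_* = \frac{1}{N}p_*\circ(\pi_X^i)_*\circ p^*$ is the identity on $\HH^i(Y_{\bar k},\Q_\ell)$ and vanishes on $\HH^j(Y_{\bar k},\Q_\ell)$ for $j\neq i$. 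Self-duality is then formal: transposing reverses the composition, so ${}^t\pi_Y^i = \frac{1}{N}\Gamma_p\circ{}^t\pi_X^i\circ{}^t\Gamma_p = \frac{1}{N}\Gamma_p\circ\pi_X^{2d-i}\circ{}^t\Gamma_p = \pi_Y^{2d-i}$, using the self-duality of $\{\pi_X^i\}$.

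Finally, to see that $p$ is of pure grade $0$, I would again invoke Lemma \ref{lem corr grade 0}: since $\Gamma_p\in\CH^d(X\times Y)$, being of pure grade $0$ amounts to $\pi_Y^i\circ\Gamma_p = \Gamma_p\circ\pi_X^i$ for all $i$. Inserting ${}^t\Gamma_p\circ\Gamma_p = Ne$ gives $\pi_Y^i\circ\Gamma_p = \Gamma_p\circ\pi_X^i\circ e$, and the commutation relation together with $\Gamma_p\circ e = \Gamma_p$ turns the right-hand side into $\Gamma_p\circ\pi_X^i$, as required. The main obstacle—indeed the only genuine content—is the passage from homological to rational equivalence: the cycles $\frac{1}{N}\Gamma_p\circ\pi_X^i\circ{}^t\Gamma_p$ are always K\"unneth projectors modulo homological equivalence (the $\pi_X^i$ being central there), but they need not be idempotent modulo rational equivalence. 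The grade-$0$ hypothesis on ${}^t\Gamma_p\circ\Gamma_p$, reinterpreted through Lemma \ref{lem corr grade 0} as the commutation of $e$ with the $\pi_X^i$, is precisely what upgrades these relations to rational equivalence.
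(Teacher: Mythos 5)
Your proposal is correct and follows essentially the same route as the paper: both arguments rest on $\Gamma_p\circ{}^t\Gamma_p=N\Delta_Y$ together with Lemma \ref{lem corr grade 0}, which converts the hypothesis ${}^t\Gamma_p\circ\Gamma_p\in\CH_d(X\times X)_0$ into commutation with the projectors $\pi_X^i$, and then verifies idempotence, orthogonality, self-duality and the pure grade $0$ property of $\Gamma_p$ by the same correspondence manipulations (your insertion of the idempotent $e=\frac{1}{N}{}^t\Gamma_p\circ\Gamma_p$ is just a repackaging of the paper's computation). The extra details you supply on completeness and the K\"unneth property in cohomology are points the paper leaves implicit.
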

	
	\begin{proof}
		It is clear that
		\[
		\Gamma_p\circ {}^t\Gamma_p = N\Delta_Y.
		\]
		It follows that the cycles $\pi^i_Y$ given in the proposition lift the K\"unneth
		components in cohomology. For all $i, j$, we have
		\begin{align*}
		N^2\pi_Y^i\circ \pi_Y^j & = \Gamma_p\circ \pi_X^i \circ ({}^t\Gamma_p \circ
		\Gamma_p)\circ \pi^j_X\circ {}^t\Gamma_p\\
		& = \Gamma_p\circ ({}^t\Gamma_p \circ \Gamma_p)\circ \pi^i_X\circ\pi^j_X \circ
		{}^t\Gamma_p\\
		& = N \Gamma_p\circ \pi_X^i\circ\pi_X^j \circ {}^t\Gamma_p\\
		& =\begin{cases} N^2\pi_Y^i, &i=j\,;\\
		0, &i\neq j.
		\end{cases}
		\end{align*}
		Here the second equality follows from Lemma \ref{lem corr grade 0}.
		Hence $\{\pi_Y^i\}$ is a Chow--K\"unneth decomposition and  it is clearly
		self-dual.
		
		To show that $p$ is of grade 0, we note that if $i+j\neq 2d$ then
		\begin{align*}
		(\pi_X^i\otimes\pi_Y^j)_*\Gamma_{p} & = \pi_Y^j\circ \Gamma_p\circ
		{}^t\pi_X^i\\
		& = \frac{1}{N} \Gamma_p\circ\pi_X^j\circ {}^t\Gamma_p\circ \Gamma_p\circ
		\pi^{2d-i}_X\\
		& = \frac{1}{N} \Gamma_p\circ {}^t\Gamma_p\circ \Gamma_p\circ \pi_X^j\circ
		\pi_X^{2d-i}\\
		& =  0.
		\end{align*}
		Hence $\Gamma_p\in \CH^d(X\times Y)_0$.
	\end{proof}
	
	One application of independent interest of the above proposition concerns finite
	quotients.
	
	\begin{cor}\label{prop G-invariantCK1}
		Let $X$ be a smooth projective variety endowed with the action of a
		finite group $G$. Assume that $X$ has a  $G$-invariant self-dual
		Chow--K\"unneth decomposition $\{\pi_X^i\}$. Then the quotient
		variety $X/G$ has a self-dual Chow--K\"unneth decomposition
		$\{\pi_{X/G}^i\}$.
	\end{cor}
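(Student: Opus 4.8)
The plan is to deduce this corollary directly from Proposition~\ref{prop self-dual CK generic finite}, applied to the quotient morphism $p : X \to X/G$. This morphism is finite, hence generically finite; assuming as usual that $G$ acts faithfully, its degree is $N = |G|$. Since $X$ is endowed with a self-dual Chow--K\"unneth decomposition $\{\pi_X^i\}$ by hypothesis, the only thing that needs to be checked in order to invoke the proposition is that the self-correspondence ${}^t\Gamma_p \circ \Gamma_p \in \CH_d(X\times X)$ sits in the graded-$0$ part $\CH_d(X\times X)_0$. Once this is established, the proposition produces the self-dual Chow--K\"unneth decomposition $\pi_{X/G}^i = \tfrac{1}{N}\,\Gamma_p\circ \pi_X^i \circ {}^t\Gamma_p$ on $X/G$, and we are done.

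First I would make ${}^t\Gamma_p \circ \Gamma_p$ explicit. Two points of $X$ have the same image under $p$ precisely when they lie in a common $G$-orbit, so the support of the composition ${}^t\Gamma_p \circ \Gamma_p$ computed on $X\times (X/G)\times X$ is $\{(x,x') : p(x)=p(x')\} = \bigcup_{g\in G}\Gamma_g$, where $\Gamma_g$ denotes the graph of the automorphism $g : X\to X$. A short intersection computation (using that $p$ is generically \'etale and that we work with rational coefficients) shows that each component occurs with multiplicity one, giving the identity
\begin{equation*}
{}^t\Gamma_p \circ \Gamma_p = \sum_{g\in G} \Gamma_g \quad \in \CH_d(X\times X).
\end{equation*}
This is the cycle-level incarnation of the familiar relation $p^*p_* = \sum_{g\in G} g_*$ on Chow groups.

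It then remains to see that $\sum_{g\in G}\Gamma_g$ is of pure grade $0$. Here I would use the $G$-invariance of $\{\pi_X^i\}$, which says that $(g\times g)^*\pi_X^i = \pi_X^i$ for all $g$ and all $i$; equivalently, using $\Gamma_g\circ\Gamma_{g^{-1}} = \Delta_X$, this reads $\Gamma_g\circ \pi_X^i = \pi_X^i \circ \Gamma_g$. Thus every graph $\Gamma_g$ commutes with all the Chow--K\"unneth projectors, hence so does their sum ${}^t\Gamma_p\circ\Gamma_p$. By the final assertion of Lemma~\ref{lem corr grade 0}, a self-correspondence commuting with all the $\pi_X^i$ lies in $\CH_d(X\times X)_0$, which is exactly the hypothesis needed for Proposition~\ref{prop self-dual CK generic finite}. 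The only genuine content is the identity ${}^t\Gamma_p\circ\Gamma_p = \sum_g \Gamma_g$; everything after that is a formal consequence of $G$-invariance and the grade-$0$ criterion, so I expect no further obstacle, beyond perhaps making precise the sense in which the possibly singular quotient $X/G$ carries a self-dual Chow--K\"unneth decomposition---this is unproblematic with rational coefficients since $\mathfrak{h}(X/G)$ is the $G$-invariant direct summand of $\mathfrak{h}(X)$.
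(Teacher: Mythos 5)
Your proof is correct and follows the same route as the paper: reduce to Proposition~\ref{prop self-dual CK generic finite} via the identity ${}^t\Gamma_p\circ\Gamma_p=\sum_{g\in G}\Gamma_g$, and use Lemma~\ref{lem corr grade 0} to translate $G$-invariance of $\{\pi_X^i\}$ into each $\Gamma_g$ (hence their sum) being of pure grade $0$. Your remarks on the possibly singular quotient and on rational coefficients likewise match the paper's.
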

	\begin{proof} First, note that we do not need to assume that the quotient $X/G$
		is smooth because we work with
		Chow groups with rational coefficients.
		For any element $g\in G$, let us still denote $g \in \CH_d(X\times X)$ the
		class of the graph of the action of $g$ on $X$. By Lemma \ref{lem corr grade 0},
		the Chow--K\"unneth decomposition $\{\pi^i_X\}$ is $G$-invariant if and only
		if $g\in \CH^d(X\times X)_0$, for all $g\in G$. Let $p:X \rightarrow Y=X/G$ be
		the quotient morphism. Then
		\[
		{}^t\Gamma_p \circ \Gamma_p = \sum_{g\in G} g\in\CH^d(X\times X)_0
		\]
		and hence Proposition \ref{prop self-dual CK generic finite} applies, showing
		that 
		\begin{equation} \label{eq quotientCK}
		\pi^i_{X/G}  := \frac{1}{|G|} \,
		\Gamma_p \circ \pi^i_X \circ {}^t\Gamma_p 
		\end{equation} defines a self-dual Chow--K\"unneth decomposition of $X/G$.
	\end{proof}

	\section{Multiplicative Chow--K\"unneth decompositions}
	
	\begin{defn}\label{def mult}
		Let $X$ be a smooth projective variety of dimension $d$ over a field $k$,
		endowed with a Chow--K\"unneth decomposition $\{\pi_X^i : 0 \leq i \leq 2d\}$. 
		The Chow--K\"unneth decomposition $\{\pi_X^i, 0
		\leq i \leq 2d\}$ is \emph{multiplicative} if 
		$$\pi_X^k \circ \Delta_{123} \circ
		(\pi_X^i \otimes \pi_X^j) = 0 \in \CH^{2d}(X\times X \times X) \quad
		\text{whenever} \ k \neq i+j.$$
	\end{defn}
	In particular, a multiplicative
	Chow--K\"unneth decomposition induces a multiplicative bi-grading on the Chow
	ring of $X$\,: $$\CH^*(X) = \bigoplus_{i,s} \CH^i(X)_s, \quad \mbox{where} \
	\CH^i(X)_s:=(\pi_X^{2i-s})_*\CH^i(X).$$
	Note that $\pi_X^k \circ \Delta_{123} \circ
	(\pi_X^i \otimes \pi_X^j) = (\pi_X^k \otimes {}^t\pi_X^i \otimes
	{}^t\pi_X^j)_*\Delta_{123}$, so that  if $\{\pi_X^i\}$ is self-dual,  then
	$\{\pi_X^i\}$ is multiplicative if and only if
	\begin{equation}\label{eq criterion multselfdual}
	(\pi_X^k \otimes \pi_X^i \otimes \pi_X^j)_*\Delta_{123} = 0 \in
	\CH^{2d}(X\times X \times X) \quad
	\text{whenever} \ i+j+k \neq 4d,
	\end{equation} 
	\emph{i.e.}, if and only if the small diagonal $\Delta_{123}$ sits in
	$\CH^{2d}(X^3)_0$ for the product Chow--K\"unneth decomposition on $X^3$\,; see
	also \cite[Proposition 8.4]{sv}.
	In this section we study the stability of having a (self-dual) multiplicative
	Chow--K\"unneth decomposition under product, projective bundle, blow-up, and
	contraction under a generically finite morphism.
	
	\subsection{Product varieties}
	We  recall the following easy but crucial property of multiplicative
	Chow--K\"unneth decomposition\,:
	\begin{prop}[Theorem 8.6 in \cite{sv}] \label{prop multstable} 
		Let $X$ and $Y$ be smooth projective varieties, each endowed with a
		multiplicative Chow--K\"unneth decomposition $\{\pi_X^i\}$ and $\{\pi_Y^i\}$
		respectively. Then 
		the product Chow--K\"unneth decomposition 
		$\{\pi_{X\times Y}^i\}$ defined in \eqref{eq productCK} is multiplicative.
		Furthermore, if all the Chern classes of $X$ and $Y$ are in the graded-0 part,
		then so are the Chern classes of $X\times Y$.
	\end{prop}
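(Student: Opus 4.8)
The plan is to reduce the multiplicativity of $\{\pi^i_{X\times Y}\}$ to that of the two factors by a factor-by-factor bookkeeping on correspondences, driven by two standard compatibilities. First, permuting the six factors of $(X\times Y)^3=X\times Y\times X\times Y\times X\times Y$ into $X^3\times Y^3$ identifies the small diagonal of $X\times Y$ with the external product $\Delta_{123}^X\otimes\Delta_{123}^Y$, where $\Delta_{123}^X\in\CH^{2d_X}(X\times X\times X)$ is viewed as a correspondence from $X\times X$ to $X$ (and likewise for $Y$); indeed both sides are the class of $\{(x,x,x,y,y,y)\}$. Second, the external tensor product \eqref{eq composition correspondences} obeys the interchange law $(\alpha_1\otimes\beta_1)\circ(\alpha_2\otimes\beta_2)=(\alpha_1\circ\alpha_2)\otimes(\beta_1\circ\beta_2)$ whenever the $\alpha$'s are correspondences between $X$-type factors and the $\beta$'s between $Y$-type factors. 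Under the same reshuffling, expanding \eqref{eq productCK} gives $\pi^k_{X\times Y}=\sum_{k_1+k_2=k}\pi^{k_1}_X\otimes\pi^{k_2}_Y$ and identifies the self-correspondence $\pi^i_{X\times Y}\otimes\pi^j_{X\times Y}$ of $(X\times Y)^2$ with $\sum_{i_1+i_2=i,\,j_1+j_2=j}(\pi^{i_1}_X\otimes\pi^{j_1}_X)\otimes(\pi^{i_2}_Y\otimes\pi^{j_2}_Y)$.

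Granting these, I would compute directly. Substituting the three expressions above into $\pi^k_{X\times Y}\circ\Delta_{123}\circ(\pi^i_{X\times Y}\otimes\pi^j_{X\times Y})$ and applying the interchange law twice so that the $X$-factors compose among themselves and the $Y$-factors among themselves, the composition becomes
\[
\sum_{\substack{k_1+k_2=k\\ i_1+i_2=i\\ j_1+j_2=j}}\bigl(\pi^{k_1}_X\circ\Delta_{123}^X\circ(\pi^{i_1}_X\otimes\pi^{j_1}_X)\bigr)\otimes\bigl(\pi^{k_2}_Y\circ\Delta_{123}^Y\circ(\pi^{i_2}_Y\otimes\pi^{j_2}_Y)\bigr).
\]
By the multiplicativity \eqref{eq mult} of $\{\pi^i_X\}$ the first tensor factor vanishes unless $k_1=i_1+j_1$, and by the multiplicativity of $\{\pi^i_Y\}$ the second vanishes unless $k_2=i_2+j_2$. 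Hence every surviving summand satisfies $k=k_1+k_2=(i_1+j_1)+(i_2+j_2)=i+j$, so the whole expression is zero whenever $k\neq i+j$. This is precisely \eqref{eq mult} for $X\times Y$, proving that the product decomposition is multiplicative.

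For the Chern classes, recall $\mathscr{T}_{X\times Y}=p_1^*\mathscr{T}_X\oplus p_2^*\mathscr{T}_Y$, so by the Whitney sum formula $c_p(X\times Y)=\sum_{a+b=p}p_1^*c_a(X)\cdot p_2^*c_b(Y)$. Since $c_a(X)\in\CH^a(X)_0$ and $c_b(Y)\in\CH^b(Y)_0$ by hypothesis, \eqref{eq product pull back} shows each term $p_1^*c_a(X)\cdot p_2^*c_b(Y)$ lies in $\CH^{p}(X\times Y)_{0}$, and summing gives $c_p(X\times Y)\in\CH^p(X\times Y)_0$.

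The only genuine subtlety — hence the main obstacle — is the careful bookkeeping behind the two compatibility identities: one must track how the reshuffling permutation of factors interacts with the projections $p_{ij}$ in \eqref{eq composition correspondences} and with the fibre-product projections defining composition of correspondences, so that the interchange law holds on the nose rather than merely up to reindexing. Once the monoidal structure on the category of correspondences is set up carefully, the identity $\Delta_{123}^{X\times Y}=\Delta_{123}^X\otimes\Delta_{123}^Y$ and the interchange law are formal, and the argument above is then a routine computation.
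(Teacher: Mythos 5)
Your argument is correct and is essentially the paper's own proof: the identity $\Delta_{123}^{X\times Y}=p_X^*\Delta_{123}^X\cdot p_Y^*\Delta_{123}^Y$ is exactly your factorization of the small diagonal as an external product, the interchange law you invoke is what the paper uses implicitly to pass to the sum of external products of the factorwise compositions, and the Chern class argument via the Whitney formula and \eqref{eq product pull back} is identical. No gaps.
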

	\begin{proof} We include a proof for the sake of completeness.
		Let $p_X : X \times Y \times
		X \times Y \times X \times Y \rightarrow X \times X \times X$ be the
		projection on the first, third and fifth factors, and let $p_Y$
		denote the projection on the second, fourth and sixth
		factors. Writing $\Delta_{123}^X$ for the small diagonal of $X$ and
		similarly for $Y$ and $X \times Y$, we have the
		identity $$\Delta_{123}^{X\times Y} = p_X^*\Delta_{123}^X \cdot
		p_Y^*\Delta_{123}^Y.$$ We immediately deduce that
		$$\pi_{X\times Y}^a \circ \Delta_{123}^{X\times Y} \circ  (\pi_{X\times Y}^b
		\otimes \pi_{X\times
			Y}^c) = \sum_{\substack{i+i'=a \\ j+j'=b
				\\ k+k'=c}} p_X^*\left[\pi_X^i \circ \Delta_{123}^X \circ (\pi_X^j \otimes
		\pi_X^k)\right] \cdot
		p_Y^*\left[\pi_Y^{i'} \circ \Delta_{123}^Y \circ (\pi_Y^{j'} \otimes
		\pi_Y^{k'}) \right].$$
		By definition of multiplicativity, the cycles $\pi_X^i \circ \Delta_{123}^X
		\circ (\pi_X^j \otimes  \pi_X^k)$ and $\pi_Y^{i'} \circ \Delta_{123}^Y \circ
		(\pi_Y^{j'} \otimes
		\pi_Y^{k'})$ are both non-zero only if
		$i=j+k$ and $i'=j'+k'$. Therefore $$\pi_{X\times Y}^a \circ
		\Delta_{123}^{X\times Y} \circ  (\pi_{X\times Y}^b \otimes \pi_{X\times
			Y}^c) = 0 \quad \mbox{if } a \neq b+c.$$ This exactly means that the
		product Chow--K\"unneth decomposition on $X \times Y$ is
		multiplicative. 
		
		For the statement concerning the Chern classes, we first note that 
		\[
		p_1^*\CH^p(X)_s \subset \CH^p(X\times Y)_s \qquad\text{and}\qquad
		p_2^*\CH^q(Y)_r\subset \CH^q(X\times Y)_r
		\]
		where $p_1$ and $p_2$ are the two projections of $X\times Y$ onto the two
		factors\,; see \cite[Proposition 8.7]{sv}. It follows from the isomorphism 
		\[
		\mathscr{T}_{X\times Y} \cong p_1^*\mathscr{T}_X\oplus p_2^*\mathscr{T}_Y
		\]
		that $c(X\times Y)= p_1^*c(X)\cdot p_2^* c(Y)\in \CH^*(X\times Y)_0$ as long as
		$c(X)\in\CH^*(X)_0$ and $c(Y)\in \CH^*(Y)_0$.
	\end{proof}

	\subsection{Projective bundles} The notations are those of Paragraph \ref{sec
		selfdualprojbundle}\,; $\pi : \PP(\mathscr{E}) \rightarrow X$ denotes the
	geometric projectivization of the vector bundle $\mathscr{E}$ on $X$.

	\begin{prop}\label{prop projbundleselfdualmult}
		Let $X$ be a smooth projective variety of dimension $d$ endowed with a self-dual
		multiplicative Chow--K\"unneth decomposition $\{\pi_X^i\}$ and let $\mathscr{E}$
		be a vector bundle on $X$ of rank $r+1$. Assume that the Chern classes
		$c_p(\mathscr{E})$ sit in $\CH^p(X)_0$ for all $p\geq 0$. Then the self-dual
		Chow--K\"unneth decomposition $\{\pi_{\PP(\mathcal{E})}^i\}$ of
		$\PP(\mathscr{E})$ defined in \eqref{eq projbundleselfdualCK} is multiplicative.
		If all the Chern classes of $X$
		are in the graded-0 part, then so are the Chern classes of $\PP(\mathscr{E})$.
		Moreover, 
		\begin{equation} \label{eq projbundleformula} \CH^p(\PP(\mathscr{E}))_s :=
		(\pi_{\PP(\mathscr{E})}^{2p-s})_*\CH^p(\PP(\mathscr{E})) =
		\bigoplus_{l=0}^{r}
		\xi^l\cdot\pi^*\CH^{p-l}(X)_s,
		\end{equation}
		where $\CH^p(X)_s := (\pi^{2p-s}_X)_*\CH^p(X)$. Under the above Chow-K\"unneth
		decompositions, the natural morphism $\pi:\PP(\mathscr{E})\rightarrow X$ is of
		pure grade 0.
	\end{prop}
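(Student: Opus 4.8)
The plan is to establish the four assertions in the order: the projective bundle formula \eqref{eq projbundleformula}, then the grade of $\pi$, then the Chern classes, and finally multiplicativity, which is the crux. The elementary fact that makes the construction \eqref{eq projbundleselfdualCK} interact well with the grading is that, because $\{\pi_X^i\}$ is multiplicative (so $\Delta_{123}^X\in\CH^{2d}(X^3)_0$) and $c_p(\mathscr{E})\in\CH^p(X)_0$, each correspondence $\cdot\,c_p(\mathscr{E})$ is of pure grade $0$: it is obtained from $\Delta_{123}^X$ by inserting the grade-$0$ class $c_p(\mathscr{E})$ in one factor, so it is a composite of grade-$0$ correspondences (Proposition \ref{prop action of correspondence}). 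Consequently the nilpotent correspondence $\eta$ entering \eqref{eq projbundleselfdualCK} is of pure grade $0$, hence so are $(\mathrm{id}+\eta)^{\pm 1/2}$, and $\Psi$ differs from $\Phi$ by a grade-preserving automorphism of $\bigoplus_{l=0}^r\mathfrak{h}(X)(l)$. Since $\pi_{\PP(\mathscr{E})}^{2p-s}=\Psi\circ(\bigoplus_j\pi_X^{2p-s-2j})\circ\Psi^{-1}$ projects, summand by summand, onto $\bigoplus_l\CH^{p-l}(X)_s$, I get $\CH^p(\PP(\mathscr{E}))_s=\Psi_*\big(\bigoplus_l\CH^{p-l}(X)_s\big)=\Phi_*\big(\bigoplus_l\CH^{p-l}(X)_s\big)$. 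Rewriting the last summand of $\Phi_*$ via $\gamma_0=\sum_{i=0}^r\pi^*c_i(\mathscr{E})\,\xi^{r-i}$ and the inclusion $c_i(\mathscr{E})\cdot\CH^{p-r}(X)_s\subseteq\CH^{p-r+i}(X)_s$ turns this into $\bigoplus_{l=0}^r\xi^l\cdot\pi^*\CH^{p-l}(X)_s$, which is \eqref{eq projbundleformula}; the sum is direct by the usual projective bundle theorem. In particular $\xi\in\CH^1(\PP(\mathscr{E}))_0$, and the relation $\xi^{r+1}=-\sum_{i=1}^{r+1}\pi^*c_i(\mathscr{E})\,\xi^{r+1-i}$ with grade-$0$ coefficients shows that $\CH^*(\PP(\mathscr{E}))_\bullet$ is a bigraded ring.

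Next I would show that $\pi$ is of pure grade $0$, working at the level of correspondences as Lemma \ref{lem corr grade 0} requires. From the identities recorded in the proof of Proposition \ref{prop self-dual projbundle} — namely $\Gamma_\pi\circ h^l\circ{}^t\Gamma_\pi$ vanishes for $l<r$ and equals $\Delta_X$ for $l=r$, while $\Gamma_\pi\circ\gamma\circ{}^t\Gamma_\pi=\Delta_X$ — one computes that $\Gamma_\pi\circ\Phi$ is the projection onto the top summand $\mathfrak{h}(X)(r)$. Composing with the grade-$0$ automorphism $(\mathrm{id}+\eta)^{-1/2}$, it follows that every component $R_j\colon\mathfrak{h}(X)(j)\to\mathfrak{h}(X)$ of $\Gamma_\pi\circ\Psi$ is of pure grade $0$ (a composite of the grade-$0$ correspondences $\cdot\,c_p(\mathscr{E})$, with $R_r=\Delta_X$). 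Feeding the pure-grade-$0$ characterization of each $R_j$ (Lemma \ref{lem corr grade 0}) into $\pi_{\PP(\mathscr{E})}^{2r+i}=\Psi\circ(\bigoplus_j\pi_X^{2r+i-2j})\circ\Psi^{-1}$ yields $\pi_X^i\circ\Gamma_\pi=\Gamma_\pi\circ\pi_{\PP(\mathscr{E})}^{2r+i}$ for all $i$, which by Lemma \ref{lem corr grade 0} is precisely the statement that $\Gamma_\pi\in\CH^{d}(\PP(\mathscr{E})\times X)_0$. Proposition \ref{prop action of correspondence} then gives that $\pi^*$ and $\pi_*$ preserve the gradings.

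For the Chern classes I would use the relative Euler sequence together with $0\to\mathscr{T}_{\PP(\mathscr{E})/X}\to\mathscr{T}_{\PP(\mathscr{E})}\to\pi^*\mathscr{T}_X\to 0$, giving $c(\mathscr{T}_{\PP(\mathscr{E})})=c(\mathscr{T}_{\PP(\mathscr{E})/X})\cdot\pi^*c(\mathscr{T}_X)$. The first factor is a polynomial in $\xi$ and the $\pi^*c_i(\mathscr{E})$, all of grade $0$, so it lies in $\CH^*(\PP(\mathscr{E}))_0$ since the grading is a ring grading; the second lies in $\CH^*(\PP(\mathscr{E}))_0$ because $c(\mathscr{T}_X)=c(X)\in\CH^*(X)_0$ by hypothesis and $\pi^*$ preserves grade. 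Hence $c_p(\PP(\mathscr{E}))\in\CH^p(\PP(\mathscr{E}))_0$.

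The main obstacle is multiplicativity. Since $\{\pi_{\PP(\mathscr{E})}^i\}$ is self-dual (Proposition \ref{prop self-dual projbundle}), the criterion \eqref{eq criterion multselfdual} reduces the problem to showing that the small diagonal $\Delta_{123}^{\PP(\mathscr{E})}$ lies in $\CH^{2(d+r)}(\PP(\mathscr{E})^3)_0$ for the product Chow--K\"unneth decomposition; here I must genuinely work with the cycle and not merely with the ring grading, since external products do not span $\CH^*(\PP(\mathscr{E})\times\PP(\mathscr{E}))$. By Remark \ref{rmk projselfdual commutes with prod} the product decomposition on $\PP(\mathscr{E})^3$ agrees with the one obtained by viewing $q\colon\PP(\mathscr{E})^3\to X^3$ as an iterated projective bundle, and the computation of the first paragraph extends to $\CH^*(\PP(\mathscr{E})^3)_\bullet=\bigoplus_{l_1,l_2,l_3}\xi_1^{l_1}\xi_2^{l_2}\xi_3^{l_3}\cdot q^*\CH^*(X^3)_\bullet$, a bigraded ring because $X^3$ carries a multiplicative Chow--K\"unneth decomposition (Proposition \ref{prop multstable}); moreover $q=\pi\times\pi\times\pi$ is of pure grade $0$ (Proposition \ref{prop action of correspondence}). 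I would then express $[\Delta_{123}^{\PP(\mathscr{E})}]=\tilde\theta\cdot q^*[\Delta_{123}^X]$, where $q^*[\Delta_{123}^X]$ is the class of the triple fibre product $\PP(\mathscr{E})\times_X\PP(\mathscr{E})\times_X\PP(\mathscr{E})$ and $\tilde\theta\in\CH^{2r}(\PP(\mathscr{E})^3)$ restricts to the relative small diagonal of this $(\PP^r)$-bundle. The key computation is that $\tilde\theta$ is a polynomial in $\xi_1,\xi_2,\xi_3$ with coefficients pulled back from the Chern classes of $\mathscr{E}$ (via the relative-diagonal formula in a projective bundle), so $\tilde\theta\in\CH^{2r}(\PP(\mathscr{E})^3)_0$; since also $[\Delta_{123}^X]\in\CH^{2d}(X^3)_0$ by multiplicativity of $X$ and $q^*$ preserves grade, the product $[\Delta_{123}^{\PP(\mathscr{E})}]$ lies in grade $0$, and multiplicativity of $\{\pi_{\PP(\mathscr{E})}^i\}$ follows. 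The hard part is precisely this explicit relative-diagonal computation, which is why the ring grading alone does not suffice.
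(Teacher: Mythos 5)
Your proposal is correct and follows the same overall strategy as the paper: identify the grading induced by $\Psi$ with the one induced by $\Phi$ (because the entries of $\eta$ are of the form $\cdot\,P(c(\mathscr{E}))$ with $P(c(\mathscr{E}))\in\CH^*(X)_0$), and then reduce multiplicativity via \eqref{eq criterion multselfdual} to showing that the small diagonal of $\PP(\mathscr{E})^3$ lies in grade $0$ for the iterated projective-bundle decomposition. The two places where you diverge are worth noting. First, for the crux — the small diagonal of $\PP(\mathscr{E})^3$ being of grade $0$ — the paper simply invokes \cite[Proposition 13.1]{sv}, which asserts that the non-self-dual decomposition $\{p^i_{\PP(\mathscr{E})}\}$ of \eqref{eq projbundleCK} is multiplicative, and then transfers this to $\{\pi^i_{\PP(\mathscr{E})}\}$ using the equality of the two gradings; you instead re-derive that fact by writing $[\Delta^{\PP(\mathscr{E})}_{123}]=\tilde\theta\cdot q^*[\Delta^X_{123}]$ with $\tilde\theta$ the product of two relative-diagonal classes, each the top Chern class of a bundle of the form $p_a^*(\pi^*\mathscr{E}/\calO(-1))\otimes p_b^*\calO(1)$, hence a polynomial in the $\xi_i$ and the $c_j(\mathscr{E})$ and therefore of grade $0$. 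This is exactly the computation carried out in the cited reference, so your argument is self-contained where the paper's is not, at the cost of leaving the "hard part" as a sketch (the sketch is sound; the needed formula is the same vanishing-locus argument the paper uses in Lemma \ref{lem proj subbundle}). Second, for the assertion that $\pi$ is of pure grade $0$, the paper has a one-line argument: $\Gamma_\pi=(\pi\times\mathrm{id})^*\Delta_X$, combined with the fact that $\Delta_X\in\CH^d(X\times X)_0$ and that the pull-back along the projective bundle $\PP(\mathscr{E})\times X\to X\times X$ preserves the grading (Remark \ref{rmk projselfdual commutes with prod}); your matrix computation of $\Gamma_\pi\circ\Psi$ and appeal to Lemma \ref{lem corr grade 0} reaches the same conclusion but is considerably more laborious and requires care with the Tate-twist index shifts. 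Both routes are valid.
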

	\begin{proof}
		It is clear that the Chow--K\"unneth decomposition $\{p^i_{\PP(\mathscr{E})}\}$
		defined in \eqref{eq
			projbundleCK} induces the decomposition \eqref{eq projbundleformula} of the Chow
		groups of $\PP(\mathscr{E})$. We first show that the self-dual Chow--K\"unneth
		decomposition $\{\pi_{\PP(\mathscr{E})}^i\}$ in Proposition \ref{prop self-dual
			projbundle} induces the same decomposition \eqref{eq projbundleformula}. With
		notations as in the proof of Proposition \ref{prop self-dual projbundle},
		it is enough to check that the correspondence $\eta :=\sigma\circ {}^t\Phi \circ
		\Phi -
		\mathrm{id}$ preserves the grading on $\CH^*(\bigoplus_{l=0}^r
		\mathfrak{h}(X)(l))$ induced by the Chow--K\"unneth decomposition $\{\pi^i_X\}$.
		By definition of $\Phi$, it suffices to check (in the general situation where
		$X$ is any smooth projective variety with a self-dual multiplicative
		Chow--K\"unneth decomposition $\{\pi^i_X\}$) that if $\alpha$ is a cycle in
		$\CH^*(X)_s$, then $\pi_*(\xi^i \cdot \pi^*\alpha)$ sits in $\CH^*(X)_s$. 
		But then, the projection formula gives $\pi_*(\xi^i \cdot \pi^*\alpha) = \alpha
		\cdot \pi_*(\xi^i)$. By multiplicativity of the Chow--K\"unneth decomposition
		$\{\pi_X^i\}$, we will be done if $\pi_*(\xi^i)$ sits in $\CH^*(X)_0$. The Chow
		ring of $\PP(\mathscr{E})$ is given by
		\begin{equation*}\label{eq chow proj bundle}
		\CH^*(\PP(\mathscr{E})) = \CH^*(X)[\xi],\qquad \text{where }\quad
		\xi^{r+1} + \pi^*c_1(\mathscr{E})\xi^r + \cdots
		+\pi^*c_r(\mathscr{E})\xi + \pi^*c_{r+1}(\mathscr{E}) = 0,
		\end{equation*}
		from which it follows that $\pi_*\xi^i$ is a polynomial in the Chern
		classes of $\mathscr{E}$. By assumption the Chern classes of $\mathscr{E}$
		belong to $\CH^*(X)_0$, and we conclude that $\pi_*(\xi^i)$ sits in $\CH^*(X)_0$
		by multiplicativity of $\{\pi_X^i\}$. 
		
		The triple product $\PP(\mathscr{E}) \times \PP(\mathscr{E}) \times
		\PP(\mathscr{E})$ is obtained by taking successively the projectivization of
		three vector bundles on $X\times X\times X$. Therefore, we may repeat the above
		argument, and we see that the decomposition of the Chow groups of
		$\PP(\mathscr{E})\times \PP(\mathscr{E}) \times \PP(\mathscr{E})$ is the same
		with respect to the following two Chow--K\"unneth decompositions\,: (1) the 
		Chow--K\"unneth decomposition $\{P^i\}$ obtained as the $3$-fold product of
		$\{p^i_{\PP(\mathscr{E})}\}$, and (2) the Chow--K\"unneth decomposition
		$\{\Pi^i\}$ obtained as the $3$-fold product of $\{\pi^i_{\PP(\mathscr{E})}\}$.
		
		In \cite[Proposition 13.1]{sv}, we proved under the assumptions of the
		proposition that the Chow--K\"unneth decomposition $\{p^i_{\PP(\mathscr{E})}\}$
		is multiplicative. By \eqref{eq criterion multselfdual}, this means that the
		small diagonal $\Delta_{123}^{\PP(\mathscr{E})}$ belongs to
		$(P^{4(d+r)})_*\CH_{d+r}(\PP(\mathscr{E}) \times \PP(\mathscr{E}) \times
		\PP(\mathscr{E}))$.
		Thus we obtain from the above that
		$\Delta_{123}^{\PP(\mathscr{E})}$ belongs to
		$(\Pi^{4(d+r)})_*\CH_{d+r}(\PP(\mathscr{E}) \times \PP(\mathscr{E}) \times
		\PP(\mathscr{E}))$.
		Therefore, by \eqref{eq criterion multselfdual}, the Chow--K\"unneth
		decomposition $\{\pi^i_{\PP(\mathscr{E})}\}$ is multiplicative.
		
		For the statement concerning the Chern classes, we only need to show that the
		Chern
		classes of the relative tangent bundle $\mathscr{T}_{\PP(\mathscr{E})/X}$ are in
		the graded-0 part. But this follows immediately from the short exact sequence
		\[
		\xymatrix{
			0\ar[r] &\calO_{\PP(\mathscr{E})}\ar[r] &\pi^*\mathscr{E}\otimes
			\calO(1)\ar[r] &\mathscr{T}_{\PP(\mathscr{E})/X}\ar[r] &0
		}
		\]
		by taking Chern classes of the sheaves involved.
		
		As in Remark \ref{rmk projselfdual commutes with prod}, we take $Y=X$ and let
		\[
		\pi\times \mathrm{id}: \PP(\mathscr{E})\times X\rightarrow X\times X
		\]
		be the product morphism. Then the fact that $\Delta_X$ belongs to
		$\CH^{d_X}(X\times X)_0$
		implies
		\[
		\Gamma_\pi = (\pi\times
		\mathrm{id})^*\Delta_X\in\CH^{d_X}(\PP(\mathscr{E})\times X)_0.
		\]
		Hence $\pi$ is of pure grade 0.
	\end{proof}

	\subsection{Blow-ups}
	We take on
	the notations from paragraph \ref{sec selfdualblowup}\,; the embedding
	$i:Y\hookrightarrow X$ is a smooth closed subvariety of codimension
	$r+1$ and $\tilde{X}:=\mathrm{Bl}_Y(X)$ is the blow-up of $X$ along
	$Y$, and we have the blow-up diagram \eqref{eq blow-up diagram}
	\begin{equation*} 
	\xymatrix{
		E\ar[r]^j\ar[d]_\pi & \tilde{X}\ar[d]^\rho\\
		Y\ar[r]^i & X
	}
	\end{equation*}
	where $E\cong\PP(\mathscr{N}_{Y/X})$ is the exceptional
	divisor. Recall from the blow-up formula \eqref{eq Chow of blow-up}
	that the Chow groups of $\tilde{X}$ can be described explicitly as
	\begin{equation*}
	\Phi_* :  \CH^p(X) \oplus\left( \bigoplus_{l=1}^{r}
	\CH^{p-l}(Y)\right)
	\overset{\cong}{\longrightarrow} \CH^p(\tilde{X}), 
	\quad (\alpha, \beta_1,\ldots,\beta_{r})\mapsto \rho^*\alpha 
	+ j_*\left( \sum_{l=1}^{r} \xi^{l-1}\cdot \pi^*\beta_l \right),
	\end{equation*}
	where $\xi\in\CH^1(E)$ is the Chern class of the relative
	$\calO(1)$-bundle. 
	
	\begin{prop}\label{prop multCK blow-up}
		Assume that both $X$ and $Y$ admit self-dual multiplicative
		Chow--K\"unneth decompositions $\{\pi^i_X\}$ and $\{\pi^i_Y\}$, respectively, 
		such that
		\begin{enumerate}[(i)]
			\item the Chern classes of the normal bundle $\mathscr{N}_{Y/X}$ sit
			in $\CH^*(Y)_0$\,;
			\item the morphism $i:Y\rightarrow X$ is of pure grade 0.
		\end{enumerate}
		Then the self-dual Chow--K\"unneth decomposition \eqref{eq blowupselfdualCK} of
		$\tilde{X}$ is multiplicative. 
		Moreover, 
		\begin{equation} \label{eq blowupformula}
		\CH^p(\tilde{X})_s := (\pi_{\tilde{X}}^{2p-s})_*\CH^p(\tilde{X}) =
		\rho^*\CH^p(X)_s \oplus\left( \bigoplus_{l=1}^{r}
		j_*(\xi^{l-1}\cdot\pi^*\CH^{p-l}(Y)_s) \right),
		\end{equation}
		where $\CH^p(X)_s = (\pi_X^{2p-s})_*\CH^p(X)$ and $\CH^p(Y)_s  =
		(\pi_Y^{2p-s})_*\CH^p(Y)$ and $\xi :=c_1\left(\mathcal{O}_{\tilde{X}}(-E)|_E
		\right)$. Furthermore, if the Chern classes of $X$ are in the graded-0 part,
		then so are the Chern classes of $\tilde{X}$. The natural blow-up morphism
		$\rho$ is of pure grade 0.
	\end{prop}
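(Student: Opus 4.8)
The plan is to mirror the proof of Proposition \ref{prop projbundleselfdualmult}, transferring multiplicativity from the (non-self-dual) Manin decomposition $\{p^i_{\tilde X}\}$ of \eqref{eq blowupCK}, for which multiplicativity is already available, to the self-dual decomposition $\{\pi^i_{\tilde X}\}$ of \eqref{eq blowupselfdualCK}. The first task is to establish the grading formula \eqref{eq blowupformula}. For $\{p^i_{\tilde X}\}$ this is immediate: by construction $\CH^p(\tilde X)_s$ is the image under $\Phi_*$ of the grade-$s$ part of $\CH^p(X)\oplus\bigoplus_{l=1}^r\CH^{p-l}(Y)$, which is exactly the right-hand side of \eqref{eq blowupformula} by the blow-up formula \eqref{eq Chow of blow-up}. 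So it suffices to show that $\{\pi^i_{\tilde X}\}$ induces the \emph{same} grading as $\{p^i_{\tilde X}\}$; since $\Psi = \Phi\circ(\mathrm{id}+\eta)^{-\frac{1}{2}}$, this amounts to checking that the nilpotent correction $\eta$ of \eqref{eq tphiphi} is of pure grade $0$, so that $(\mathrm{id}+\eta)^{\pm\frac{1}{2}}$ preserves the grading on $\CH^*(X)\oplus\bigoplus_{l=1}^r\CH^{*}(Y)$ induced by $\{\pi_X^i\}$ and $\{\pi_Y^i\}$.

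The pure-grade-$0$-ness of $\eta$ is the analogue of the identity $\pi_*(\xi^i)\in\CH^*(X)_0$ used in the projective bundle case, and it is here that assumption (i) enters. Writing $\sigma\circ{}^t\Phi\circ\Phi = D\circ(\mathrm{id}+\eta)$ as in the proof of Proposition \ref{prop self-dualCK}, the diagonal part $D$ is visibly grade-preserving, while the strictly-lower-triangular entries of $\eta$ all lie in the $Y$-block and, by \cite[Lemma 5.2]{vialfib}, are the correspondences on $Y$ given by multiplication by polynomials in the Chern classes of $\mathscr{N}_{Y/X}$. By assumption (i) these Chern classes lie in $\CH^*(Y)_0$, so multiplication by them is of pure grade $0$ by multiplicativity of $\{\pi_Y^i\}$; hence $\eta$ is of pure grade $0$ and \eqref{eq blowupformula} follows for $\{\pi^i_{\tilde X}\}$.

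Next I would transfer multiplicativity. Under the same hypotheses (i) and (ii), the Manin decomposition $\{p^i_{\tilde X}\}$ is multiplicative by \cite[Proposition 13.2]{sv}. Let $\{P^i\}$ and $\{\Pi^i\}$ denote the product Chow--K\"unneth decompositions on $\tilde X\times\tilde X\times\tilde X$ obtained from $\{p^i_{\tilde X}\}$ and $\{\pi^i_{\tilde X}\}$ respectively. Since $\eta$ is of pure grade $0$, Proposition \ref{prop action of correspondence}(iii) shows that the corresponding correction on the triple self-product is again of pure grade $0$; consequently $\{P^i\}$ and $\{\Pi^i\}$ induce one and the same grading on $\CH^*(\tilde X^3)$. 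This is the step that replaces the geometric observation used for projective bundles---that $\PP(\mathscr{E})^3$ is itself a tower of projective bundles---by a computation purely at the level of correspondences, and I expect it to be the main obstacle. Granting it, the multiplicativity of $\{p^i_{\tilde X}\}$ together with the agreement of gradings shows, exactly as in the proof of Proposition \ref{prop projbundleselfdualmult}, that the small diagonal $\Delta_{123}^{\tilde X}$ lies in $\CH^{2d}(\tilde X^3)_0$ for $\{\Pi^i\}$; since $\{\pi^i_{\tilde X}\}$ is self-dual, \eqref{eq criterion multselfdual} then yields that $\{\pi^i_{\tilde X}\}$ is multiplicative.

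It remains to treat the Chern classes and the morphism $\rho$. Assuming $c_p(X)\in\CH^p(X)_0$, I would first note that $c(Y)\in\CH^*(Y)_0$: indeed $i^*c(X) = c(Y)\cdot c(\mathscr{N}_{Y/X})$, and since $i$ is of pure grade $0$ (assumption (ii)) the left-hand side lies in $\CH^*(Y)_0$ by Proposition \ref{prop action of correspondence}(i), while $c(\mathscr{N}_{Y/X})\in\CH^*(Y)_0$ by (i), so $c(Y)$ is grade-$0$ by multiplicativity of $\{\pi_Y^i\}$. The standard comparison of tangent bundles under blow-up expresses $c(\tilde X)$ as $\rho^*c(X)$ corrected by a class supported on $E$ that is a polynomial in $\xi$, $\pi^*c(\mathscr{N}_{Y/X})$ and $\pi^*c(Y)$; by \eqref{eq blowupformula}, together with the facts that $\xi\in\CH^1(E)_0$ and that $\pi$ is of pure grade $0$ (Proposition \ref{prop projbundleselfdualmult}), every term is grade-$0$, so $c(\tilde X)\in\CH^*(\tilde X)_0$. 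Finally, by Remark \ref{rmk nonconnected center} the morphism $\rho\times\mathrm{id}\colon\tilde X\times X\to X\times X$ is the blow-up of $X\times X$ along $Y\times X$, so $\Gamma_\rho = (\rho\times\mathrm{id})^*\Delta_X$, and \eqref{eq blowupformula} shows that $(\rho\times\mathrm{id})^*$ preserves the grading; as $\Delta_X\in\CH^d(X\times X)_0$ by self-duality, we conclude $\Gamma_\rho\in\CH^d(\tilde X\times X)_0$, that is, $\rho$ is of pure grade $0$.
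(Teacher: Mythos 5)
Your overall architecture matches the paper's: both proofs transfer multiplicativity from the non-self-dual Manin decomposition $\{p^i_{\tilde{X}}\}$ of \eqref{eq blowupCK} to the self-dual one $\{\pi^i_{\tilde{X}}\}$ by showing that the nilpotent correction $\eta$ (whose nonzero entries are multiplications by $\pm\pi_*\xi^m$, i.e.\ by polynomials in $c(\mathscr{N}_{Y/X})$) respects the grading, and your treatments of \eqref{eq blowupformula}, of the Chern classes via the sequence $0\to\mathscr{T}_{\tilde{X}}\to\rho^*\mathscr{T}_X\to j_*\mathscr{E}\to 0$, and of $\Gamma_\rho=(\rho\times\mathrm{id})^*\Delta_X$ are essentially the paper's. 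You also correctly locate where the work is concentrated, namely in comparing the two product decompositions on $\tilde{X}^3$.

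There is, however, a genuine gap at the step you dispose of in one sentence: ``the Manin decomposition $\{p^i_{\tilde X}\}$ is multiplicative by \cite[Proposition 13.2]{sv}.'' That citation does not apply under hypotheses (i)--(ii). The cited proposition carries an extra assumption that $i_*$ be injective (plus a further hypothesis (iii)), and the paper explicitly states (see the Remark following Corollary \ref{prop G-invariantCK}) that its own proof has a gap: mere compatibility of $i_*,i^*$ with the gradings is insufficient, and must be strengthened to $i$ being of pure grade $0$ precisely so that $(i\times\mathrm{id}_Z)_*$, and hence $(j^{\times 3})_*$ on $E^3\to\tilde{X}^3$, respect the gradings. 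The substantive content you are missing is therefore the proof that $\Delta_{123}^{\tilde{X}}$ lies in $\CH^{2d}(\tilde{X}^3)_0$ for the product of the $\{p^i_{\tilde{X}}\}$ \emph{without} assuming $i_*$ injective. The paper does this by (a) identifying the product Chow--K\"unneth decomposition on $\tilde{X}^3$ with the one obtained by viewing $\tilde{X}^3$ as an iterated blow-up of $X^3$, which yields the inclusion \eqref{eq big inclusion}, and (b) proving the explicit blow-up formula of Lemma \ref{lem blowup pullback diagonal},
\[
(\rho^{\times 3})^*\Delta^{(3)}_X \;=\; \Delta^{(3)}_{\tilde{X}} \;+\; (j^{\times 3}_{/Y})_*\,P\big(\xi_k, c(\mathscr{N}_3)\big),
\]
so that $\Delta^{(3)}_{\tilde{X}}$ is exhibited as a difference of two classes each visibly in the graded-$0$ part (the first because $\Delta^{(3)}_X\in\CH^{2d}(X^3)_0$ by multiplicativity and self-duality of $\{\pi_X^i\}$, the second by hypothesis (i) and the grade-compatibility of $(j^{\times 3}_{/Y})_*$). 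None of this is recoverable from the literature as you cite it, and without it your argument establishes only that the two decompositions induce the same grading, not that either one is multiplicative.
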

	\begin{proof} 
		The proof is similar to that of \cite[Proposition 13.2]{sv}, and consequently we
		only sketch the main steps. The difference with \cite[Proposition 13.2]{sv} is
		that we have removed the injectivity assumption on $i_*$. For that purpose, an
		explicit computation of the small diagonal is carried out in Lemma \ref{lem
			blowup pullback diagonal}. Note that the assumption \emph{(ii)} implies that
		$i^*$ and
		$i_*$ are compatible with the gradings on the Chow groups, namely
		\[
		i_*\CH^p(Y)_s \subseteq \CH^{p+r+1}(X)_s,\quad i^*\CH^p(X)_s \subseteq
		\CH^p(Y)_s.
		\]
		In addition to that, by (\textit{i}) of Proposition \ref{prop action of
			correspondence}, the push-forward and pull-back via $i\times 
		\mathrm{id}_Z: Y\times Z\rightarrow X\times Z$ also respect the gradings on
		Chow ring, for all smooth projective variety $Z$ endowed with a multiplicative 
		Chow--K\"unneth decomposition. 
		
		\emph{Step 1}: Without the condition of being self-dual, by \cite[Proposition
		13.2]{sv}, we can construct a Chow--K\"unneth decomposition
		$\{p_{\tilde{X}}^i\}$ of $\tilde{X}$ such that the induced grading on the Chow
		ring is the same as the one given in equation \eqref{eq blowupformula}.
		Furthermore, the homomorphisms $j_*$ and $j^*$ are compatible with the gradings,
		where $\CH^*(E)_s$ is given by \eqref{eq projbundleformula} which is induced by
		a self-dual multiplicative Chow--K\"unneth decomposition on $E$. More generally,
		if $Z$ is a smooth projective variety with a multiplicative Chow--K\"unneth
		decomposition, then 
		\[
		(j\times \mathrm{id}_Z)_*: \CH^p(E\times Z) \rightarrow
		\CH^{p+1}(\tilde{X}\times Z)\qquad \text{and} \qquad
		(j\times \mathrm{id}_Z)^*: \CH^p(\tilde{X}\times Z)\rightarrow \CH^p(E\times Z)
		\]
		respect the gradings. Indeed, the action of $(i\times \mathrm{id}_Z)_*$ and
		$(i\times \mathrm{id}_Z)^*$ respects the gradings of $\CH^*(Y\times Z)$ and
		$\CH^*(X\times Z)$ and hence the argument for the compatibility of $j_*$ and
		$j^*$
		with the grading of Chow groups (as in the proof of \cite[Proposition 13.2]{sv})
		applies to show the compatibility of $(j\times \mathrm{id}_Z)_*$ and $(j\times
		\mathrm{id}_Z)^*$ with the gradings. We apply this successively to
		\[
		E\times E\times E \longrightarrow E\times E\times \tilde{X} \longrightarrow
		E\times \tilde{X}\times\tilde{X} \longrightarrow \tilde{X}\times \tilde{X}\times
		\tilde{X}
		\]
		and show that the pull-back and push-forward of $j^{\times 3}: E^3\rightarrow
		\tilde{X}^3$ respect the grading on the Chow groups. This was implicitly used in
		the proof of \cite[Proposition 13.2]{sv} but the assumption there (compatibility
		of $i^*$ and $i_*$ with the grading on Chow groups) is insufficient to deduce
		it.
		
		\emph{Step 2}: The self-product $\tilde{X}\times\tilde{X}\times\tilde{X}$ has
		two natural Chow--K\"unneth decompositions. The first one is the $3$-fold
		product of
		$\{p_{\tilde{X}}^i\}$. The second one is obtained when $\tilde{X}^3$ is viewed
		as the successive blow-up of $X^3$ along $Y\times X \times X$, $\tilde{X}\times
		Y \times X$, and $\tilde{X}\times \tilde{X} \times Y$, where $X^3$ is given the
		product
		Chow--K\"unneth decomposition. Then it turns out that the above two
		Chow--K\"unneth decompositions are the same\,; see the proof of
		\cite[Proposition
		13.2]{sv}. As a consequence we have
		\begin{equation}\label{eq big inclusion}
		(\rho^{\times 3})^*\CH(X^3)_0 + (j^{\times 3})_*\left( \xi_1^{i_1} \xi_2^{i_2}
		\xi_3^{i_3}\cdot (\pi^{\times 3})^*\CH^*(Y^3)_0 \right) \subseteq
		\CH^*(\tilde{X}^3)_0.
		\end{equation}
		Here the notation is as follows. If $f:Z\rightarrow Z'$ is a morphism, then
		$f^{\times 3}: Z^3\rightarrow Z'^3$ is the 3-fold self product of $f$\,; the
		class
		$\xi_i\in\CH^1(E^3)$ is the pull-back of $\xi$ via the projection onto the
		$i^{\text{th}}$ factor, $i=1,2,3$.
		
		\emph{Step 3}: The small diagonal $\Delta^{123}_{\tilde{X}}\in
		\CH^{2d}(\tilde{X}^3)$ is contained in the graded-0 part. This was proved in
		\cite[Proposition 13.2]{sv} under the assumption that $i_*$ is injective.
		Without assuming that $i_*$ is injective, this follows immediately from Lemma
		\ref{lem blowup pullback diagonal} and equation \eqref{eq big inclusion}.
		
		\emph{Step 4}: The above three steps can be carried out if we replace
		$p_{\tilde{X}}^i$ by $\pi_{\tilde{X}}^i$.
		We have seen that the small diagonal  $\Delta_{123}^{\tilde{X}}$ belongs to
		$(p^{4d}_{\tilde{X} \times \tilde{X} \times \tilde{X}})_*\CH_{d}(\tilde{X}
		\times\tilde{X} \times \tilde{X})$,
		where $p^{4d}_{\tilde{X} \times \tilde{X} \times \tilde{X}} := \sum_{i+j+k
			=4d} p^i_{\tilde{X}} \otimes p^j_{\tilde{X}} \otimes p^k_{\tilde{X}}$ 
		and  $p^i_{\tilde{X}}$ is the idempotent defined in \eqref{eq blowupCK}.
		In order to prove the proposition, it suffices as in the proof of Proposition
		\ref{prop projbundleselfdualmult} to show that the Chow--K\"unneth decomposition
		$\{\pi^i_{\tilde{X}}\}$ of \eqref{eq blowupselfdualCK} 
		satisfies 
		$$(\pi_{\tilde{X}}^i)_*\CH^p(\tilde{X}) =
		(p_{\tilde{X}}^i)_*\CH^p(\tilde{X}) = \rho^*\CH^p(X)_{2i-s} \oplus\left(
		\bigoplus_{l=0}^{r-1}
		j_*(\xi^l\cdot\pi^*\CH^{p-l-1}(Y)_{2i-s}) \right).$$ 
		For this it is enough to show that $\eta := \sigma\circ D \circ {}^t\Phi \circ
		\Phi -
		\mathrm{id}$ preserves the grading on $\CH^*( \mathfrak{h}(X) \oplus
		\bigoplus_{l=1}^{r} \mathfrak{h}(Y)(l))$. It is in fact enough to show that $
		{}^t\Phi \circ \Phi$ preserves the grading. By \eqref{eq tphiphi}, we only need
		to show that $\rho_*\rho^*\CH^*(X)_s \subseteq \CH^*(X)_s$ and that $\pi_*\circ
		h^l \circ \pi^*\CH^*(Y)_s \subseteq \CH^*(Y)_s$ for all $l\geq 0$ and all
		integers $s$. The first inclusion is obvious because $\rho_*\rho^*$ is the
		identity on $\CH^*(X)$, and the second inclusion was already established in the
		proof of Proposition \ref{prop projbundleselfdualmult}.
		
		The statement concerning the Chern classes follows from the short exact sequence
		in Lemma \ref{lem blow-up tangent}, which
		again follows from a Chern class computation as before.
		
		Now we take $X'=X$ in Remark \ref{rmk nonconnected center} and note that
		\[
		\Gamma_\rho = (\rho\times \mathrm{id})^*\Delta_X.
		\]
		This implies that $\rho$ is of pure grade 0 since $\Delta_X\in\CH^{d_X}(X\times
		X)_0$. 
	\end{proof}
	
	\begin{lem}\label{lem blowup pullback diagonal}
		Let $\rho^{\times l}: \tilde{X}^l\rightarrow X^l$ be the $l$-fold self-product
		of $\rho$ and let
		\[
		j^{\times l}_{/Y}: E^{\times l}_Y=E\times_Y E\times_Y\cdots \times_Y
		E\rightarrow \tilde{X}^l
		\]
		be the closed immersion induced by $j$. Let $\mathscr{N}_l$ be the sheaf on
		$E^{\times l}_Y$ that is obtained as the pull-back of $\mathscr{N}_{Y/X}$ from
		$Y$ and let $\xi_k\in\CH^1(E^{\times l}_Y)$ be the pull-back of $\xi\in\CH^1(E)$
		from the $k^\text{th}$ factor, $1\leq k\leq l$. Then the following equality
		holds in $\CH_d(\tilde{X}^{2})$
		\begin{equation}
		\label{eq diagonal of blowup}
		(\rho \times \rho)^*\Delta_X = \Delta_{\tilde{X}} + (j\times_Y j)_*
		\left[\frac{c(\mathscr{N}_2)}{(1-\xi_1)(1-\xi_2)} \right]_d,
		\end{equation}
		where $[-]_d$ takes the dimension $d$ component. More generally,
		\[
		(\rho^{\times l})^*\Delta^{(l)}_X = \Delta^{(l)}_{\tilde{X}} + (j^{\times
			l}_{/Y})_* P(\xi_k, c(\mathscr{N}_l)),
		\]
		where $P(-)$ is a polynomial, $\Delta_X^{(l)}=\{(x,x,\ldots,x)\}\subset X^l$ is
		the small diagonal.
	\end{lem}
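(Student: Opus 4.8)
The plan is to compute $(\rho\times\rho)^*\Delta_X$ as a refined Gysin (excess intersection) class attached to the Cartesian square
\[
\xymatrix{ Z \ar[r]^{\tilde\Delta}\ar[d]_{q} & \tilde{X}\times\tilde{X}\ar[d]^{\rho\times\rho}\\ X\ar[r]^{\Delta_X} & X\times X }
\]
where $Z:=\tilde{X}\times_X\tilde{X}=(\rho\times\rho)^{-1}(\Delta_X)$. Since $\Delta_X:X\hookrightarrow X\times X$ is a regular embedding of codimension $d$ with normal bundle $\mathscr{T}_X$, the pullback $(\rho\times\rho)^*\Delta_X$ is by definition $\tilde\Delta_*\big(\Delta_X^![\tilde{X}\times\tilde{X}]\big)\in\CH_d(\tilde{X}\times\tilde{X})$. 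The first step is to read off the geometry of $Z$: over $X\setminus Y$ the morphism $\rho\times\rho$ is an isomorphism, so there $Z$ is the diagonal; over $Y$ the fibre of $\rho$ is $\PP^r$, so $Z$ acquires the component $E\times_Y E$. Thus $Z=\Delta_{\tilde{X}}\cup(E\times_Y E)$, with $\Delta_{\tilde{X}}\cong\tilde{X}$ of the expected dimension $d$ and $B:=E\times_Y E$ of dimension $d+r-1$; the two components meet along the diagonal of $B$, of dimension $d-1$.

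I would then apply Fulton's excess intersection formula. Writing $\iota=j\times_Y j$ for the immersion of $B$, the normal cone of $Z$ decomposes, up to terms supported on the $(d-1)$-dimensional intersection locus, so that $s(Z,\tilde{X}\times\tilde{X})=s(\Delta_{\tilde{X}},\tilde{X}\times\tilde{X})+s(B,\tilde{X}\times\tilde{X})+(\text{terms in }\CH_{\leq d-1})$, and $\Delta_X^![\tilde{X}\times\tilde{X}]=\big\{c(q^*\mathscr{T}_X)\cap s(Z,\tilde{X}\times\tilde{X})\big\}_d$. Because $\rho\times\rho$ is transverse to $\Delta_X$ generically along $\Delta_{\tilde{X}}$ (equivalently $\Delta_{\tilde{X}}$ is generically reduced of the expected dimension in $Z$), the diagonal component contributes exactly $[\Delta_{\tilde{X}}]$ with multiplicity one, while the lower-dimensional discrepancy contributes nothing in $\CH_d$. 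As a sanity check, $(\rho\times\rho)_*$ sends both $(\rho\times\rho)^*\Delta_X$ and $\Delta_{\tilde{X}}$ to $[\Delta_X]$, so their difference is indeed pushed forward from $B$. It remains to compute the $B$-contribution $\iota_*\big\{c(q^*\mathscr{T}_X)\cap s(B,\tilde{X}\times\tilde{X})\big\}_d$.

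The heart of the matter is a normal-bundle bookkeeping on $B$ whose key feature is a cancellation of the tangent bundle of $Y$. Write $\mathrm{pr}_1,\mathrm{pr}_2:B\to E$ for the two projections, so that $\pi\,\mathrm{pr}_1=\pi\,\mathrm{pr}_2:B\to Y$. Since $B\to X$ factors through $Y$, the conormal sequence of $Y\subset X$ gives $c(q^*\mathscr{T}_X|_B)=c(\mathscr{T}_{Y,B})\,c(\mathscr{N}_2)$, where $\mathscr{T}_{Y,B}:=(\pi\,\mathrm{pr}_1)^*\mathscr{T}_Y$. On the other hand $B$ is smooth and regularly embedded via $B\subset E\times E\subset\tilde{X}\times\tilde{X}$: as $B=(\pi\times\pi)^{-1}(\Delta_Y)$ one has $N_{B/E\times E}=(\pi\,\mathrm{pr}_1)^*N_{\Delta_Y/Y\times Y}=\mathscr{T}_{Y,B}$, while $N_{E\times E/\tilde{X}\times\tilde{X}}|_B=\mathrm{pr}_1^*N_{E/\tilde{X}}\oplus\mathrm{pr}_2^*N_{E/\tilde{X}}$ has total Chern class $(1-\xi_1)(1-\xi_2)$, because $N_{E/\tilde{X}}=\mathcal{O}_{\tilde{X}}(E)|_E$ has first Chern class $-\xi$ by definition of $\xi$. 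Hence $c(N_{B/\tilde{X}\times\tilde{X}})=c(\mathscr{T}_{Y,B})(1-\xi_1)(1-\xi_2)$ and $s(B,\tilde{X}\times\tilde{X})=c(N_{B/\tilde{X}\times\tilde{X}})^{-1}\cap[B]$. Substituting, the factors $c(\mathscr{T}_{Y,B})$ cancel and the $B$-contribution becomes
\[
(j\times_Y j)_*\left[\frac{c(\mathscr{N}_2)}{(1-\xi_1)(1-\xi_2)}\right]_d,
\]
which together with $[\Delta_{\tilde{X}}]$ yields exactly \eqref{eq diagonal of blowup}.

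Finally, the $l$-fold statement follows from the identical argument applied to the regular embedding $\Delta_X^{(l)}:X\hookrightarrow X^l$ (codimension $(l-1)d$, normal bundle $\mathscr{T}_X^{\oplus(l-1)}$) and the square over $\rho^{\times l}$. The fibre product decomposes as $\Delta_{\tilde{X}}^{(l)}\cup E^{\times l}_{Y}$, the small diagonal again contributes $[\Delta^{(l)}_{\tilde{X}}]$ with multiplicity one, and the same cancellation—now $N_{E^{\times l}_Y/E^{\times l}}$ is the pullback of $\mathscr{T}_Y^{\oplus(l-1)}$, which cancels the factor $c(\mathscr{T}_Y)^{l-1}$ coming from $q^*(\mathscr{T}_X|_Y)^{\oplus(l-1)}$—leaves the polynomial $P(\xi_k,c(\mathscr{N}_l))=\big[\,c(\mathscr{N}_l)^{l-1}\prod_{k=1}^{l}(1-\xi_k)^{-1}\big]_d$ in the $\xi_k$ and the Chern classes of $\mathscr{N}_l$. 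I expect the main obstacle to be the justification of the excess-intersection decomposition itself—verifying that $\Delta_{\tilde{X}}$ occurs with multiplicity one and that the two distinguished components split off cleanly in the dimension-$d$ part, all discrepancies being confined to the lower-dimensional intersection locus—rather than the Chern-class bookkeeping, which is mechanical once the relevant normal bundles are identified.
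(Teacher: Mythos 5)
Your computation lands on the correct formula, and the Chern-class bookkeeping --- the cancellation of $c(\mathscr{T}_Y)$ between $c(q^*\mathscr{T}_X|_{E\times_YE})=c(\mathscr{T}_Y)c(\mathscr{N}_2)$ and $c(N_{E\times_YE/\tilde{X}\times\tilde{X}})=c(\mathscr{T}_Y)(1-\xi_1)(1-\xi_2)$ --- is exactly right. But your route is genuinely different from the paper's. The paper never analyses the reducible fibre product $Z=\tilde{X}\times_X\tilde{X}$ at all: it factors $\rho\times\rho=(\rho\times\mathrm{id}_X)\circ(\mathrm{id}_{\tilde{X}}\times\rho)$, so that $\tilde{X}\times\tilde{X}$ becomes a composition of two smooth blow-ups of $X\times X$, computes $(\rho\times\mathrm{id}_X)^*\Delta_X=\Gamma_\rho$ (a transverse case), and then obtains the excess term $(j\times_Yj)_*\bigl[c(\mathscr{N}_2)/((1-\xi_1)(1-\xi_2))\bigr]_d$ from a single application of Fulton's blow-up formula (Theorem 6.7) to $(\mathrm{id}_{\tilde{X}}\times\rho)^*\Gamma_\rho$; the case of general $l$ is then handled by induction using the excess intersection formula, which is why the paper also needs the preliminary observation that the relative diagonals $\Delta_{ij}(E_{/Y})$ are polynomials in the $\xi_k$ and $c(\mathscr{N}_l)$. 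Your one-shot approach buys an explicit closed form for $P$ (which the paper leaves implicit) at the price of having to control the normal cone of a reducible, non-equidimensional scheme; the paper's factorization sidesteps that entirely because each step is a single blow-up for which Theorem 6.7 already packages the answer.

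The step you flag as ``the main obstacle'' is indeed a real gap as written. The identity $s(Z,\tilde{X}\times\tilde{X})=s(\Delta_{\tilde{X}},\cdot)+s(E\times_YE,\cdot)+(\text{error in low dimension})$ is not a general property of Segre classes of reducible schemes; what is available is Fulton's canonical decomposition of $\Delta_X^![\tilde{X}\times\tilde{X}]$ into contributions of the distinguished varieties, i.e.\ the supports of the irreducible components of the normal cone $C_Z(\tilde{X}\times\tilde{X})$. To recover your formula from it you must verify: (i) $Z$ is generically reduced along $B:=E\times_YE$ and the cone component over $B$ is generically the normal bundle $N_{B/\tilde{X}\times\tilde{X}}$, so that its contribution equals $\{c(q^*\mathscr{T}_X)\cap s(B,\tilde{X}\times\tilde{X})\}_d$ up to classes supported on $\Delta_{\tilde{X}}\cap B\cong\Delta_E$, which vanish in dimension $d$; and (ii) any further distinguished varieties lie in $\Delta_E$, of dimension $d-1$, hence also contribute nothing in dimension $d$. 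Point (i) is an honest local computation rather than a formality: in a standard chart the ideal of $Z$ is $(x_1-x_1',\,x_k-x_k',\,x_1(u_2-u_2'),\ldots,x_1(u_{r+1}-u_{r+1}'))$, and one checks that $(x_1(u_j-u_j'))_j=(x_1)\cap(u_j-u_j')_j$ is radical, so the multiplicity along $B$ is one and $Z=\Delta_{\tilde{X}}\cup B$ scheme-theoretically away from $\Delta_E$. With (i) and (ii) supplied your argument closes and agrees with the paper; without them the decomposition is an assertion, not a proof.
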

	
	\begin{proof}
		Let $\Delta_{ij}(E_{/Y})\in\CH^r(E^{\times l}_{/Y})$ be the
		relative (over $Y$) bigger diagonal given by $\{(x_1,\ldots,x_l):x_i=x_j\}$. We
		claim that $\Delta_{ij}(E/Y)$ is a polynomial of $\xi_i$, $\xi_j$ and
		$c(\mathscr{N}_l)$. To prove this claim, it suffices to show that in the case
		$l=2$. In this case we have the natural homomorphisms of sheaves
		\[
		\calO(-\xi_1)\rightarrow \mathscr{N}_2\qquad \text{and} \qquad
		\calO(-\xi_2)\rightarrow \mathscr{N}_2
		\]
		and they give rise to a section $\alpha$ of
		$\Big(\mathscr{N}_2/\calO(-\xi_1)\Big)\otimes \calO(\xi_2)$. The cycle
		$\Delta_{12}(E/Y)$ is the vanishing locus of $\alpha$ and hence its class is the
		top Chern class of the above sheaf. The claim follow immediately. Hence it
		suffices to show that
		\[
		(\rho^{\times l})^*\Delta^{(l)}_X = \Delta^{(l)}_{\tilde{X}} + (j^{\times
			l}_{/Y})_* P(\Delta_{ij}(E_{/Y}), \xi_k, c(\mathscr{N}_l)),
		\]
		for some polynomial $P$.
		
		Note that $\tilde{X}\times\tilde{X}$ can be viewed as a successive blow-up of
		$X\times X$ as follows
		\[
		\xymatrix{
			\tilde{X}\times\tilde{X} \ar[rr]^{\mathrm{id}_{\tilde{X}}\times \rho}
			&&\tilde{X}\times X \ar[rr]^{\rho\times \mathrm{id}_X} &&X\times X.
		}
		\]
		For each blow-up, we use \cite[Theorem 6.7]{fulton} (Blow-up Formula) and easily
		get
		\begin{equation*}
		(\rho\times \mathrm{id}_X)^* \Delta_X = \Gamma_\rho
		\end{equation*}
		and
		\begin{align*}
		(\mathrm{id}_{\tilde{X}} \times \rho)^* \Gamma_\rho & = \Delta_{\tilde{X}}+
		(\mathrm{id}_{\tilde{X}} \times j)_* 
		\left[ (p_{\tilde{X}\times
			E,2})^*c\left(\frac{\pi^*\mathscr{N}_{Y/X}}{\calO_E(-1)}\right) \cap
		(\mathrm{id}_X \times \pi)^* s(E,\tilde{X}) \right]_d \\
		& = \Delta_{\tilde{X}} + (j\times_Y j)_*
		\left[\frac{c(\mathscr{N}_l)}{(1-\xi_1)(1-\xi_2)} \right]_d.
		\end{align*}
		The general case follows by induction on $l$. The induction step is established
		by observing that
		\[
		(\rho^{\times (l+1)})^*\Delta^{(l+1)}_{X} = \Big( (\rho^{\times
			l})^*\Delta^{(l)}_X \times \tilde{X} \Big)\cdot \Big(\tilde{X}^{l-1} \times
		(\rho\times \rho)^*\Delta_X\Big)
		\] 
		and by applying \cite[Theorem 6.3]{fulton} (Excess Intersection Formula) to the
		following square
		\[
		\xymatrix{
			E^{\times l}_{/Y} \ar[rr]^{j^{\times{(l-1)}}_{/Y}\times \mathrm{id}_{E^{\times
						2}_Y}\quad}\ar[d]_{\mathrm{id}\times j} &&\tilde{X}^{l-1}\times E\times_Y\times
			E\ar[d]^{\mathrm{id}\times j^{\times 2}_{/Y}} \\
			E^{\times l}_{/Y}\times \tilde{X} \ar[rr]^{j^{\times l}_{/Y} \times
				\mathrm{id}_{X}} &&\tilde{X}^{l-1}\times \tilde{X}\times \tilde{X}.
		}
		\]
	\end{proof}
	
	\begin{rmk}\label{rmk blow-up diagram degree 0} 
		Under the assumptions of Proposition \ref{prop multCK blow-up}, all the
		varieties involved in the blow-up diagram \eqref{eq blow-up
			diagram}, namely $E$, $Y$, $X$ and
		$\tilde{X}$, are given a self-dual multiplicative Chow--K\"unneth decomposition
		such that all the morphisms involved in that diagram , namely
		$i$, $\pi$, $j$ and $\rho$,
		are of pure grade 0. This fact will be  needed in the proof of Lemma \ref{lem
			S_1 admissible} to show that a certain set of closed subvarieties is admissible.

		By Proposition \ref{prop multCK blow-up}, it only remains to show that the
		closed immersion $j$ is of pure grade 0. In
		what follows, we will use the notations from the proof of
		Proposition \ref{prop self-dualCK}. We will also borrow the notations from the
		proof of Proposition \ref{prop self-dual projbundle} and we add a ``$'$" to each
		notation to avoid possible confusion. Note that $E\cong \PP(\mathscr{N}_{Y/X})$
		is naturally a projective bundle over $Y$. Let
		\[
		\Phi'=\left(\bigoplus_{l=0}^{r-1} h^l \circ {}^t\Gamma_{\pi} \right) \oplus
		\gamma'\circ {}^t\Gamma_{\pi}: \left( \bigoplus_{l=0}^{r-1}\mathfrak{h}(Y)(l)
		\right)\oplus \mathfrak{h}(Y)(r) \longrightarrow \mathfrak{h}(E)
		\]
		be the isomorphism defined in Section \ref{sec selfdualprojbundle} (note that
		this was denoted $\Phi$ there). Then we have a commutative diagram
		\[
		\xymatrix{
			\bigoplus_{l=1}^{r+1}\mathfrak{h}(Y)(l)\ar[d]_{\Phi'}
			\ar[rr]^{\mathrm{id}\oplus \Gamma_i\quad } && \left( \bigoplus_{l=1}^{r} 
			\mathfrak{h}(Y)(l)\right) \oplus \mathfrak{h}(X) \ar[d]^{\Phi}\\
			\mathfrak{h}(E)(1) \ar[rr]^{\Gamma_j} &&\mathfrak{h}(\tilde{X}).
		}
		\]
		The only nontrivial part of the above commutativity is 
		\[
		\Phi|_{\mathfrak{h}(X)} \circ \Gamma_i = \Gamma_j\circ (\gamma'\circ
		{}^t\Gamma_{\pi}).
		\]
		Note that $ \Phi|_{\mathfrak{h}(X)}={}^t\Gamma_{\rho}$, and one easily sees that
		the
		above identity is simply the ``key formula" of \cite[Proposition 6.7(a)]{fulton}
		stated at the level of correspondences. It is clear that the commutativity holds
		only when $\Phi'$ is taken to be the modified projective bundle formula
		isomorphism, instead of the usual one\,; see Section \ref{sec
			selfdualprojbundle}.
		Let $\sigma'$, $\eta'$ and $\Psi'=\Phi'\circ (\mathrm{id}+\eta')^{-\frac{1}{2}}$
		be the corresponding  $\sigma$, $\eta$ and $\Psi$ of the proof of Proposition
		\ref{prop self-dual projbundle} for the projective bundle $\pi:E\rightarrow Y$.
		By comparing \eqref{eq matrix 1} and \eqref{eq tphiphi}, we get
		\[
		\Gamma_j\circ \Phi'|_{\oplus_{l=1}^{r} \mathfrak{h}(Y)(l)} \circ
		\eta'|_{\oplus_{l=1}^{r} \mathfrak{h}(Y)(l)} = \Phi|_{\oplus_{l=1}^{r}
			\mathfrak{h}(Y)(l)} \circ \eta|_{\oplus_{l=1}^{r} \mathfrak{h}(Y)(l)}.
		\]
		As a result, we have
		\[
		\Gamma_j\circ \Psi' = \Psi \circ (\mathrm{id}\oplus \Gamma_i), \qquad \text{on
		}\bigoplus_{l=1}^{r} \mathfrak{h}(Y)(l).
		\]
		Note that $\eta'|_{\mathfrak{h}(Y)(r+1)}=\cdot c_{r}(\mathscr{N}_{Y/X}):
		\mathfrak{h}(Y)(r+1)\rightarrow \mathfrak{h}(Y)(1)$ and
		$\eta'|_{\mathfrak{h}(Y)(1)}=0$, and hence $\eta'^{\circ l}=0$ on
		$\mathfrak{h}(Y)(r+1)$ for all $l\geq 2$. As a consequence
		\[
		\Gamma_j\circ \Psi'|_{\mathfrak{h}(Y)(r+1)} =\Gamma_j\circ \Phi'\circ (1
		-\frac{1}{2}\eta')|_{\mathfrak{h}(Y)(r+1)}.
		\]
		Meanwhile, since $\eta|_{\mathfrak{h}(X)}=0$, one easily shows that
		\[
		\Psi\circ (\mathrm{id}\oplus \Gamma_i)|_{\mathfrak{h}(Y)(r+1)} =\Phi\circ
		(1+\eta)^{-\frac{1}{2}}|_{\mathfrak{h}(X)}\circ \Gamma_i = \Phi\circ \Gamma_i.
		\]
		It follows that
		\begin{align*}
		\Gamma_j\circ \Psi' &= \Psi\circ (\mathrm{id}\oplus \Gamma_i) -\frac{1}{2}
		(\Gamma_j\circ \Phi')|_{\mathfrak{h}(Y)(1)}\circ (\cdot
		c_r(\mathscr{N}_{Y/X}))\\
		& =  \Psi\circ (\mathrm{id}\oplus \Gamma_i) -\frac{1}{2} \Phi
		|_{\mathfrak{h}(Y)(1)}\circ (\cdot c_r(\mathscr{N}_{Y/X}))\\
		& =  \Psi\circ (\mathrm{id}\oplus \Gamma_i) -\frac{1}{2} \Psi
		|_{\mathfrak{h}(Y)(1)}\circ (\cdot c_r(\mathscr{N}_{Y/X})).
		\end{align*}
		Here the last equality uses $\Psi |_{\mathfrak{h}(Y)(1)} = \Phi
		|_{\mathfrak{h}(Y)(1)}$, which follows from the fact that
		$\eta|_{\mathfrak{h}(Y)(1)}=0$. Hence we get
		\[
		\Gamma_j = \Psi\circ (\mathrm{id}\oplus \Gamma_i)\circ (\Psi')^{-1}
		-\frac{1}{2} \Psi |_{\mathfrak{h}(Y)(1)}\circ (\cdot
		c_r(\mathscr{N}_{Y/X}))\circ (\Psi')^{-1}.
		\]
		This equality implies that $\Gamma_j$ is of pure grade 0 since all the morphisms
		appearing on the right-hand side are of pure grade 0.
	\end{rmk}

	\subsection{Generically finite quotients} The following proposition shows that
	the Chow--K\"unneth decomposition constructed
	in Proposition \ref{prop self-dual CK generic finite} is multiplicative.
	
	\begin{prop}\label{prop multCK generic finite}
		Let $p:X\rightarrow Y$ be a generically finite morphism between smooth
		projective varieties. Assume that $X$ is endowed with a self-dual multiplicative
		Chow--K\"unneth decomposition $\{\pi^i_X\}$ and that
		\[
		{}^t\Gamma_p\circ\Gamma_p\in \CH_d(X\times X)_0, \quad d=\dim X.
		\] 
		Then the Chow--K\"unneth decomposition of $Y$, as given in Proposition \ref{prop
			self-dual CK generic finite}, is also multiplicative.
	\end{prop}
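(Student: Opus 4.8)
The plan is to reduce the multiplicativity of $\{\pi_Y^i\}$ to a single statement about the small diagonal, and then transport that statement from $X$ to $Y$ along $p$. Since $\{\pi_Y^i\}$ is self-dual by Proposition \ref{prop self-dual CK generic finite}, the criterion \eqref{eq criterion multselfdual} tells us that $\{\pi_Y^i\}$ is multiplicative if and only if the small diagonal $\Delta_{123}^Y$ lies in $\CH^{2d}(Y^3)_0$ for the product Chow--K\"unneth decomposition on $Y^3$ (which is itself self-dual by Proposition \ref{prop self-dualproduct}). So the whole proof comes down to showing that $\Delta_{123}^Y \in \CH^{2d}(Y^3)_0$.

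On the source side, the same criterion applied to the self-dual multiplicative decomposition $\{\pi_X^i\}$ gives $\Delta_{123}^X \in \CH^{2d}(X^3)_0$. The idea is to push this membership forward along $p^{\times 3} := p\times p\times p : X^3 \to Y^3$. First I would observe that $p$ is of pure grade $0$ by Proposition \ref{prop self-dual CK generic finite} (this is exactly where the hypothesis ${}^t\Gamma_p\circ \Gamma_p \in \CH_d(X\times X)_0$ enters), and hence, by part (\textit{iii}) of Proposition \ref{prop action of correspondence} applied twice, the graph $\Gamma_{p^{\times 3}} = \Gamma_p \otimes \Gamma_p \otimes \Gamma_p$ is of pure grade $0$ in $\CH^{3d}(X^3\times Y^3)$; that is, $p^{\times 3}$ is itself a morphism of pure grade $0$. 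By the ``in particular'' clause of Proposition \ref{prop action of correspondence}(\textit{i}), its push-forward $(p^{\times 3})_*$ then preserves the grading on Chow groups.

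It remains to relate the two small diagonals by push-forward. Since $p^{\times 3}$ restricts on $\Delta_{123}^X \cong X$ to the degree-$N$ map $p : X\to Y$ with image $\Delta_{123}^Y \cong Y$, one has
$$(p^{\times 3})_*\,\Delta_{123}^X = N\, \Delta_{123}^Y \ \in \CH^{2d}(Y^3).$$
Combining the three observations, the membership $\Delta_{123}^X \in \CH^{2d}(X^3)_0$ together with the grading-preservation of $(p^{\times 3})_*$ yields $N\,\Delta_{123}^Y \in \CH^{2d}(Y^3)_0$, whence $\Delta_{123}^Y \in \CH^{2d}(Y^3)_0$ because $N\neq 0$ and we work with rational coefficients. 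By \eqref{eq criterion multselfdual} this is precisely the multiplicativity of $\{\pi_Y^i\}$.

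I expect the only genuinely delicate point to be the grading behaviour of $p^{\times 3}$: one must know that ``pure grade $0$'' is stable under tensor product of correspondences (so that $\Gamma_p\otimes\Gamma_p\otimes\Gamma_p$ stays grade $0$) and that a grade-$0$ morphism has grading-preserving push-forward. Both are supplied by Proposition \ref{prop action of correspondence}, but they rely on the product decompositions on $X^3$ and $Y^3$ being self-dual, which is ultimately why self-duality (rather than mere multiplicativity) of $\{\pi_X^i\}$ is needed throughout. An alternative, more computational route would be to verify directly the correspondence identity $\Delta_{123}^Y = \tfrac1N\,\Gamma_p\circ \Delta_{123}^X\circ({}^t\Gamma_p\otimes {}^t\Gamma_p)$ (for instance via Manin's identity principle) and then substitute $\pi_Y^i = \tfrac1N \Gamma_p\circ \pi_X^i\circ {}^t\Gamma_p$ into $\pi_Y^k\circ \Delta_{123}^Y\circ(\pi_Y^i\otimes\pi_Y^j)$; the grading argument above has the advantage of avoiding any manipulation of triple compositions.
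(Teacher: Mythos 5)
Your proposal is correct, and it takes a genuinely different route from the paper. The paper's proof is essentially the ``alternative, more computational route'' you sketch in your last paragraph: it verifies the identity $\Delta^Y_{123} = \tfrac1N\,\Gamma_p\circ \Delta^X_{123}\circ({}^t\Gamma_p\otimes {}^t\Gamma_p)$, then pushes the projectors $\pi_Y^i = \tfrac1N\Gamma_p\circ\pi_X^i\circ{}^t\Gamma_p$ through this expression, the key manipulation being to commute ${}^t\Gamma_p\circ\Gamma_p$ past the $\pi_X^i$ (Lemma \ref{lem corr grade 0}, which is exactly where the hypothesis ${}^t\Gamma_p\circ\Gamma_p\in\CH_d(X\times X)_0$ enters) and to use $\Gamma_p\circ{}^t\Gamma_p=N\Delta_Y$ to absorb the extra factors; multiplicativity of $\{\pi_X^i\}$ then kills the terms with $k\neq i+j$. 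Your main argument instead invokes the self-duality of $\{\pi_Y^i\}$ to reduce multiplicativity to the single membership $\Delta^Y_{123}\in\CH^{2d}(Y^3)_0$ via \eqref{eq criterion multselfdual}, and transports $\Delta^X_{123}\in\CH^{2d}(X^3)_0$ across $p^{\times 3}$ using the pure-grade-$0$ property of $\Gamma_p$ from Proposition \ref{prop self-dual CK generic finite} together with parts (\textit{i}) and (\textit{iii}) of Proposition \ref{prop action of correspondence}. Both arguments consume the hypothesis in the same place (it is what makes $p$ of pure grade $0$, equivalently what makes ${}^t\Gamma_p\circ\Gamma_p$ commute with the projectors), and both use the same push-forward formula $(p^{\times3})_*\Delta^X_{123}=N\Delta^Y_{123}$. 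What your route buys is conceptual economy --- no triple compositions of correspondences need to be manipulated, only the formal stability of ``pure grade $0$'' under tensor product and push-forward --- at the price of leaning on the full strength of Proposition \ref{prop action of correspondence}; the paper's computation is longer but self-contained modulo Lemma \ref{lem corr grade 0}. Your closing remark correctly identifies self-duality (of $\{\pi_X^i\}$, hence of the product decompositions on $X^3$ and $Y^3$) as the hypothesis that makes the grading formalism available.
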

	
	\begin{proof}
		Let $N$ be the degree of the morphism $p$. We first note that
		\[
		\Delta^Y_{123} =\frac{1}{N} (p\times p\times p)_*\Delta^X_{123} =
		\frac{1}{N}\Gamma_p\circ\Delta^X_{123} \circ ({}^t\Gamma_p\otimes {}^t\Gamma_p)
		\]
		and that
		\begin{align*}
		({}^t\Gamma_p\otimes{}^t\Gamma_p)\circ (\pi_Y^i\otimes\pi_Y^j) 
		& = \frac{1}{N^2}\Big( {}^t\Gamma_p\circ\Gamma_p\circ\pi^i_X\circ
		{}^t\Gamma_p\Big) \otimes
		\Big( {}^t\Gamma_p\circ\Gamma_p\circ\pi^j_X\circ {}^t\Gamma_p\Big)\\
		& = \frac{1}{N^2} \Big( \pi^i_X\circ {}^t\Gamma_p\circ\Gamma_p\circ
		{}^t\Gamma_p\Big) \otimes
		\Big( \pi^i_X\circ {}^t\Gamma_p\circ\Gamma_p\circ {}^t\Gamma_p\Big)\\
		& =  \Big( \pi^i_X\circ {}^t\Gamma_p\Big) \otimes  \Big( \pi^j_X\circ
		{}^t\Gamma_p\Big)\\
		& =(\pi^i_X\otimes \pi_X^j)\circ ({}^t\Gamma_p\otimes {}^t\Gamma_p).
		\end{align*}
		Hence we have
		\begin{align*}
		\pi^k_Y\circ \Delta^Y_{123} \circ (\pi_Y^i\otimes \pi_Y^j) 
		& = \frac{1}{N^2} \Gamma_p\circ \pi_X^k\circ {}^t\Gamma_p \circ \Gamma_p
		\circ \Delta^X_{123} \circ ({}^t\Gamma_p\otimes {}^t\Gamma_p)\circ
		(\pi_Y^i\otimes \pi_Y^j)\\
		& = \frac{1}{N^2} \Gamma_p\circ  {}^t\Gamma_p \circ \Gamma_p \circ \pi^k_X
		\circ
		\Delta^X_{123} \circ (\pi^i_X\otimes \pi^j_X)\circ ({}^t\Gamma_p\otimes
		{}^t\Gamma_p)\\
		& = \frac{1}{N}\Gamma_p \circ \pi^k_X \circ \Delta^X_{123} \circ (\pi^i_X\otimes
		\pi^j_X)\circ ({}^t\Gamma_p\otimes {}^t\Gamma_p).
		\end{align*}
		Note that in the above computation, we have used the commutativity of
		${}^t\Gamma_p\circ\Gamma_p$ and $\pi^i_X$ as correspondences\,; see Lemma
		\ref{lem
			corr grade 0}.
		By assumption $\pi^i_X$ is multiplicative and hence
		\[
		\pi^k_X \circ \Delta^X_{123} \circ (\pi^i_X\otimes \pi^j_X)=0,
		\]
		for all $k\neq i+j$. It follows that
		\[
		\pi^k_Y\circ \Delta^Y_{123} \circ (\pi_Y^i\otimes \pi_Y^j) =0,
		\]
		for all $k\neq i+j$ and this establishes the multiplicativity of $\{\pi_Y^i\}$.
	\end{proof}
	
	\begin{cor}\label{prop G-invariantCK}
		Let $X$ be a smooth projective variety endowed with the action of a
		finite group $G$. Assume that $X$ has a  $G$-invariant self-dual
		multiplicative
		Chow--K\"unneth decomposition $\{\pi_X^i\}$. Then the  Chow--K\"unneth
		decomposition
		$\{\pi_{X/G}^i\}$ defined in \eqref{eq quotientCK} of the quotient variety
		$X/G$ is self-dual and multiplicative.\qed
	\end{cor}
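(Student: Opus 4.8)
The plan is to recognize this corollary as the combination of the two generically-finite descent results, Corollary \ref{prop G-invariantCK1} and Proposition \ref{prop multCK generic finite}, applied to the quotient morphism $p : X \to X/G$. Since $G$ is finite, $p$ is a finite, hence generically finite, morphism of degree $N = |G|$, and the decomposition $\{\pi^i_{X/G}\}$ of \eqref{eq quotientCK} is precisely the one produced by Proposition \ref{prop self-dual CK generic finite} for this $p$. So there is nothing new to construct\,: the task is only to verify that this already-constructed decomposition is both self-dual and multiplicative.

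For self-duality, I would simply invoke Corollary \ref{prop G-invariantCK1}. Its hypotheses ask only for a $G$-invariant self-dual Chow--K\"unneth decomposition on $X$ (and \emph{not} multiplicativity), which is part of our standing assumption, and its conclusion is exactly that $\{\pi^i_{X/G}\}$ is self-dual. Thus the self-duality half of the statement is immediate.

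For multiplicativity, the strategy is to apply Proposition \ref{prop multCK generic finite} to $p$. That proposition requires $\{\pi^i_X\}$ to be self-dual and multiplicative, which is exactly our hypothesis, together with the single extra condition ${}^t\Gamma_p \circ \Gamma_p \in \CH_d(X\times X)_0$. This is the only point that needs checking, and it was already recorded in the proof of Corollary \ref{prop G-invariantCK1}\,: one has ${}^t\Gamma_p \circ \Gamma_p = \sum_{g \in G} g$, where $g$ denotes the class of the graph of the action of $g$ on $X$. By Lemma \ref{lem corr grade 0}, the $G$-invariance of $\{\pi^i_X\}$ is equivalent to each $g$ being of pure grade $0$, that is $g \in \CH^d(X\times X)_0$, and summing over $g \in G$ keeps us in the grade-$0$ part. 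Hence the hypothesis of Proposition \ref{prop multCK generic finite} is met and $\{\pi^i_{X/G}\}$ is multiplicative.

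There is accordingly no real obstacle here\,: the corollary is a formal assembly of the two preceding descent propositions, and the only substantive input — that ${}^t\Gamma_p \circ \Gamma_p$ lies in the grade-$0$ part of $\CH_d(X\times X)$ — is furnished for free by the $G$-invariance of the Chow--K\"unneth decomposition through Lemma \ref{lem corr grade 0}. This is why the statement can be asserted without a separate argument.
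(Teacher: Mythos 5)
Your proposal is correct and follows exactly the route the paper intends: the corollary is stated with no separate proof precisely because it is the formal combination of Corollary \ref{prop G-invariantCK1} (self-duality) and Proposition \ref{prop multCK generic finite} (multiplicativity), with the sole hypothesis ${}^t\Gamma_p\circ\Gamma_p=\sum_{g\in G}g\in\CH^d(X\times X)_0$ already supplied in the proof of Corollary \ref{prop G-invariantCK1} via Lemma \ref{lem corr grade 0}. Nothing is missing.
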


	\begin{rmk}
		Proposition \ref{prop multCK blow-up} and Corollary \ref{prop G-invariantCK}
		fix a couple gaps in the proof of \cite[Theorem 6]{sv}. First the proof that
		the Hilbert square $X^{[2]}$ can be endowed with a multiplicative
		Chow--K\"unneth decomposition that is \emph{self-dual} was omitted. Second
		Proposition \ref{prop multCK blow-up} corrects and improves \cite[Proposition
		13.2]{sv}\,: the assumption \emph{(iii)} of  \cite[Proposition 13.2]{sv} is
		superfluous, while the assumption \emph{(ii)} requiring $i_*$ and $i^*$ to be
		compatible with the gradings should be strengthened to requiring the inclusion
		morphism $i : Y \rightarrow X$ to be of pure grade $0$ (which is a stronger
		condition by Proposition \ref{prop action of correspondence}).
	\end{rmk}

	\section{Successive blow-ups}
	Let $X$ be a smooth projective variety with a multiplicative
	Chow--K\"unneth decomposition. In this section, we wish to understand
	when a variety obtained from $X$ by successive smooth blow-ups admits
	a multiplicative Chow--K\"unneth decomposition. For that matter we
	provide sufficient conditions on the centers of these successive
	blow-ups for the resulting variety to admit a multiplicative
	Chow--K\"unneth decomposition.
	We consider a fairly general situation, which could prove useful in future work.
	We are led to formulate the
	following technical definition. 
	\begin{defn} \label{def complete admissible}  Let
		$\mathcal{S}=\{Y_1,Y_2,\ldots,Y_n\}$ be a finite set of closed
		subvarieties of $X$. The set $\mathcal{S}$ is
		said to be \textit{complete} if the following two conditions hold\,:
		\begin{enumerate}[(i)]
			\item $X\in \mathcal{S}$\,;
			\item If $Y_{j_1},Y_{j_2}\in\mathcal{S}$ and the scheme-theoretic intersection
			$Y'=Y_{j_1}\cap
			Y_{j_2}$ is non-empty, then $Y'\in\mathcal{S}$.
		\end{enumerate}
		The set $\mathcal{S}$ is \textit{admissible} if the following
		conditions hold\,:
		\begin{enumerate}[(i)]
			\item each $Y_j$ is smooth and has a self-dual multiplicative Chow--K\"unneth
			decomposition\,;
			\item if $Y_{j_1} \subset Y_{j_2}$ are closed subvarieties of $X$ that
			belong to $\mathcal{S}$ and if $i:Y_{j_1}\hookrightarrow Y_{j_2}$
			denotes the embedding morphism, then
			\begin{enumerate}[(a)]
				\item The Chern classes of the normal bundle
				$\mathscr{N}_{Y_{j_1}/Y_{j_2}}$ sit in $\CH^*(Y_{j_1})_0$\,;
				\item The morphism $i: Y_{j_1}\hookrightarrow Y_{j_2}$ is of pure grade 0.
			\end{enumerate}
		\end{enumerate}
	\end{defn}
	
	\begin{rmk}\label{rmk admissible composition}
		Being admissible is transitive in the following sense. If $\{Y_1\subset Y_2\}$
		and $\{Y_2\subset Y_3\}$ are two admissible subsets, then $\{Y_1\subset Y_3\}$
		is admissible.
	\end{rmk}
	
	The first reason for introducing admissible sets of subvarieties of $X$ lies
	in the following.
	\begin{prop}\label{prop adm mult}
		Given any two elements $Y_{j_1}$ and $Y_{j_2}$ of an admissible set
		such that $Y_{j_1} \subset Y_{j_2}$, the blow-up of $Y_{j_2}$ along
		$Y_{j_1}$ has a multiplicative Chow--K\"unneth decomposition.
	\end{prop}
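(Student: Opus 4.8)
The plan is to invoke Proposition \ref{prop multCK blow-up} directly, with the identifications $X = Y_{j_2}$ and $Y = Y_{j_1}$ in the notation used there. First I would observe that, by condition \emph{(i)} in the definition of an admissible set (Definition \ref{def complete admissible}), both $Y_{j_1}$ and $Y_{j_2}$ are smooth and carry self-dual multiplicative Chow--K\"unneth decompositions. This places us within the standing hypotheses of Proposition \ref{prop multCK blow-up}, so all that remains is to verify its two numbered assumptions for the embedding $i: Y_{j_1} \hookrightarrow Y_{j_2}$.

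Assumption \emph{(i)} of Proposition \ref{prop multCK blow-up} asks that the Chern classes of the normal bundle $\mathscr{N}_{Y_{j_1}/Y_{j_2}}$ lie in $\CH^*(Y_{j_1})_0$; this is exactly condition \emph{(ii)(a)} in the definition of admissibility. Assumption \emph{(ii)} of Proposition \ref{prop multCK blow-up} asks that the inclusion $i: Y_{j_1} \hookrightarrow Y_{j_2}$ be of pure grade $0$; this is precisely condition \emph{(ii)(b)} of admissibility. With both assumptions thereby checked, Proposition \ref{prop multCK blow-up} produces a self-dual multiplicative Chow--K\"unneth decomposition on the blow-up of $Y_{j_2}$ along $Y_{j_1}$, which is the assertion.

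There is no genuine obstacle here: the definition of admissibility was engineered precisely so that its hypotheses on a nested pair $Y_{j_1} \subset Y_{j_2}$ reproduce the input required by Proposition \ref{prop multCK blow-up}. The only point demanding attention is the orientation of the inclusion, namely that one must apply the blow-up result with the smaller variety $Y_{j_1}$ as the center and the larger variety $Y_{j_2}$ as the ambient space, matching the pair $(Y,X) = (Y_{j_1}, Y_{j_2})$. Since the conclusion of Proposition \ref{prop multCK blow-up} is in fact the stronger statement that the blow-up carries a \emph{self-dual} multiplicative Chow--K\"unneth decomposition (with Chern classes in the graded-$0$ part when those of $Y_{j_2}$ are), the proposition as stated follows a fortiori.
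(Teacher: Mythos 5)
Your proposal is correct and follows exactly the paper's argument, which likewise deduces the statement immediately from Proposition \ref{prop multCK blow-up}; your only addition is to spell out explicitly that the admissibility conditions \emph{(ii)(a)} and \emph{(ii)(b)} are by design the two hypotheses of that proposition. Nothing is missing.
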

	\begin{proof}
		The proposition follows at once from  Proposition \ref{prop multCK blow-up}.
	\end{proof}
	
	Pick $Y\in \mathcal{S}$ and let $\tilde{X}$ be the blow-up of $X$ with
	center $Y$. Define $\mathrm{Bl}_Y(\mathcal{S})$ to be the subset of
	the set of smooth closed subvarieties of $\tilde{X}$ that consists of
	the strict transforms of the $Y_j$ for all $Y_j\in \mathcal{S}$ such
	that $Y_j$ is not contained in $Y$. Here is the main reason for introducing
	admissible sets. The following key proposition shows
	that admissible sets behave well after blowing-up along one of their
	elements, making it thus possible to 
	avoid checking that the assumptions of Proposition \ref{prop multCK blow-up} are
	met after each blow-up.
	
	\begin{prop}\label{prop admissible blow-up}
		Let $\mathcal{S}$ be a complete admissible subset of closed
		subvarieties of $X$. Let $Y\in\mathcal{S}$ and let $\tilde{X}$ be
		the blow-up of $X$ along $Y$. Then $\mathrm{Bl}_Y( \mathcal{S})$ is
		complete and admissible.
	\end{prop}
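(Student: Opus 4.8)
The plan is to treat completeness and admissibility separately, pushing all of the Chow-theoretic bookkeeping onto the results of Section~2 and isolating the genuinely geometric input into a single statement about strict transforms. Throughout, write $\widetilde{(\cdot)}$ for strict transform under $\rho\colon \tilde X\to X$, and assume $Y\subsetneq X$ (otherwise there is nothing to blow up). For completeness, observe first that since $X\in\mathcal S$ and $X\not\subseteq Y$, the strict transform of $X$ is $\tilde X$ itself, so $\tilde X\in\mathrm{Bl}_Y(\mathcal S)$; this gives condition (i). The substance is closure under intersection. Given $Y_a,Y_b\in\mathcal S$ with $Y_a,Y_b\not\subseteq Y$, I would establish the geometric identity
$$
\widetilde{Y_a}\cap\widetilde{Y_b}=
\begin{cases}
\widetilde{Y_a\cap Y_b} & \text{if } Y_a\cap Y_b\not\subseteq Y,\\
\emptyset & \text{if } Y_a\cap Y_b\subseteq Y.
\end{cases}
$$
Granting this, completeness of $\mathcal S$ gives $Y_a\cap Y_b\in\mathcal S$ whenever it is non-empty, and in the first case $Y_a\cap Y_b\not\subseteq Y$ exhibits $\widetilde{Y_a\cap Y_b}$ as an element of $\mathrm{Bl}_Y(\mathcal S)$; thus $\mathrm{Bl}_Y(\mathcal S)$ is closed under intersection and hence complete.

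The geometric heart of the argument, and the step I expect to be the main obstacle, is the identity above together with the identification of each individual strict transform. The key leverage is that, by completeness, every scheme-theoretic intersection of members of $\mathcal S$ again lies in $\mathcal S$ and is therefore smooth; since for a scheme-theoretic intersection one always has $T_p(Z_1\cap Z_2)=T_pZ_1\cap T_pZ_2$, smoothness of $Z_1\cap Z_2$ forces the intersection to be \emph{clean}. In particular $Y_j$ and $Y$ meet cleanly along $W_j:=Y_j\cap Y$, so the strict transform is the \emph{smooth} blow-up $\widetilde{Y_j}=\mathrm{Bl}_{W_j}(Y_j)$ (and $\widetilde{Y_j}\cong Y_j$ when $W_j=\emptyset$). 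The same clean-intersection calculus, now applied to the triple $Y_a,Y_b,Y$, yields the displayed identity: away from $E$ both sides restrict to $Y_a\cap Y_b$, while over $E$ each strict transform meets the fibre $\PP(\mathscr N_{Y/X})$ along the projectivized image of the tangent space of $Y_a$, resp.\ $Y_b$, so that the disjointness when $Y_a\cap Y_b\subseteq Y$ is precisely the statement that these projectivized normal directions do not meet. This tangent-space analysis is where the care lies, since it is exactly the (clean, and in our applications transverse) configuration of $Y_a$, $Y_b$ and $Y$ that must be extracted from the hypotheses.

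For admissibility, condition (i) is now immediate: each $\widetilde{Y_j}=\mathrm{Bl}_{W_j}(Y_j)$ is smooth, and since $\{W_j\subset Y_j\}$ is a nested pair in $\mathcal S$ (using $W_j\in\mathcal S$ by completeness) it is admissible, so its two defining grade-$0$ conditions are exactly the hypotheses of Proposition~\ref{prop multCK blow-up}; that proposition then equips $\widetilde{Y_j}$ with a self-dual multiplicative Chow--K\"unneth decomposition. For condition (ii), take a nested pair $\widetilde{Y_a}\subset\widetilde{Y_b}$ arising from $Y_a\subset Y_b$ in $\mathcal S$, both not contained in $Y$, and identify the inclusion with the induced morphism of blow-ups $\mathrm{Bl}_{W_a}(Y_a)\hookrightarrow\mathrm{Bl}_{W_b}(Y_b)$. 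To verify (a) and (b) I would compute the graph of this inclusion through the blow-up isomorphism \eqref{eq Chow of blow-up}, invoking Remark~\ref{rmk blow-up diagram degree 0}, which guarantees that for each of the two blow-ups all four structure morphisms $i,\pi,j,\rho$ are of pure grade~$0$. Composing these grade-$0$ morphisms, and using transitivity of admissibility (Remark~\ref{rmk admissible composition}), shows that $\widetilde{Y_a}\hookrightarrow\widetilde{Y_b}$ is of pure grade~$0$, while the Chern classes of its normal bundle land in the graded-$0$ part by the same Chern-class mechanism as in Proposition~\ref{prop multCK blow-up}, being determined by the grade-$0$ data of $\mathscr N_{Y_a/Y_b}$ and of the two exceptional divisors. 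With (i) and (ii) established, $\mathrm{Bl}_Y(\mathcal S)$ is admissible, which together with the first paragraph completes the proof; the one delicate point throughout remains the strict-transform geometry of the second paragraph.
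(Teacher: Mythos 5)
Your treatment of completeness and of admissibility condition (i) is sound and matches the paper: the tangent-space/clean-intersection argument for the dichotomy $\tilde Y_a\cap\tilde Y_b=\widetilde{Y_a\cap Y_b}$ or $\emptyset$ is the same observation the authors make, and identifying $\tilde Y_j$ with $\mathrm{Bl}_{Y_j\cap Y}(Y_j)$ and invoking Proposition \ref{prop multCK blow-up} is exactly their route. Contrary to your closing remark, however, this strict-transform geometry is not where the main difficulty lies.

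The genuine gap is in admissibility condition (ii). The inclusion $\tilde Y_a\hookrightarrow\tilde Y_b$ of strict transforms is \emph{not} a composition of the structure morphisms $i,\pi,j,\rho$ of the two blow-up diagrams, so ``composing grade-$0$ morphisms'' does not apply to it. Concretely, the correspondence $(\rho_a\times\rho_b)^*\Gamma_{i}$ is not the graph of the induced inclusion: by the excess intersection formula it equals that graph \emph{plus} a correction term supported on the exceptional loci, and one must show separately that this correction lies in the graded-$0$ part. This is exactly what the paper's formula \eqref{eq gen fulton} (a relative of Lemma \ref{lem blowup pullback diagonal}) accomplishes: it expresses $\rho^*(j_Z)_*\alpha$ as $(j_{\tilde Z})_*\rho'^*\alpha$ plus a term $j_*\left(c(\mathscr{E})\cap\pi^*f_*(s(Y',Z)\cdot i'^*\alpha)\right)_k$ whose grade-$0$ membership uses the Segre class $s(Y',Z)$ and data attached to the full quadruple $\{Y\cap Z,\,Y,\,Z,\,X\}$, not just the nested pair and the exceptional divisors. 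The same issue affects your condition (a): unless $Y_a\subset Y$ or the intersection is transverse, $\mathscr{N}_{\tilde Z/\tilde X}$ is not merely a twist of $\rho'^*\mathscr{N}_{Z/X}$ by exceptional divisors; one needs the excess bundle $\mathscr{N}_{Y,Z/X}$ of Lemma \ref{lem normal bundle blow up} and Grothendieck--Riemann--Roch applied to $j'$ to control $\mathrm{ch}(\mathscr{N}_{\tilde Z/\tilde X})$. Finally, even after these computations one only obtains the weaker statement that push-forward respects the grading; to upgrade this to the pure-grade-$0$ condition on the graph itself, the paper re-runs the entire argument for the admissible set $\{\tilde Z\times X,\tilde Z\times Y,\tilde Z\times Z,\tilde Z\times Y'\}$ and recovers $\Gamma_{j_{\tilde Z}}$ as $(1\times j_{\tilde Z})_*\Delta_{\tilde Z}$ --- a step your sketch gestures at but does not carry out.
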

	
	Before proving Proposition \ref{prop admissible blow-up}, we state and prove
	three
	auxiliary lemmas. The notations are those of diagram \eqref{eq blow-up
		diagram}.\medskip

	First, the tangent bundles of $X$ and its blow-up $\tilde{X}$ are linked as
	follows.
	
	\begin{lem}\label{lem blow-up tangent}
		With notations as in diagram \eqref{eq blow-up diagram}, there is a
		short exact sequence
		\[
		\xymatrix{ 0\ar[r] &\mathscr{T}_{\tilde{X}}\ar[rr]
			&&\rho^*\mathscr{T}_X\ar[rr] &&j_*\mathscr{E}\ar[r] &0, }
		\]
		where $\mathscr{E}=\pi^*\mathscr{N}_{Y/X}/\calO_E(-1)$.
	\end{lem}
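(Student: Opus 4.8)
The plan is to take the differential $d\rho \colon \mathscr{T}_{\tilde{X}} \to \rho^*\mathscr{T}_X$ as the first arrow and to identify its cokernel with $j_*\mathscr{E}$. Since $\rho$ restricts to an isomorphism over $X\setminus Y$, the map $d\rho$ is an isomorphism on the dense open set $\tilde{X}\setminus E$; as $\mathscr{T}_{\tilde{X}}$ and $\rho^*\mathscr{T}_X$ are both locally free of rank $d$, any kernel of $d\rho$ would be a torsion subsheaf of a locally free sheaf, hence zero. Thus $d\rho$ is injective and we obtain a short exact sequence $0\to \mathscr{T}_{\tilde{X}}\to \rho^*\mathscr{T}_X\to \mathscr{Q}\to 0$ with $\mathscr{Q}$ supported on $E$; it then remains to prove $\mathscr{Q}\cong j_*\mathscr{E}$.

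First I would check that $\mathscr{Q}$ is an $\calO_E$-module, i.e. annihilated by the ideal sheaf of $E$. This is local over $X$, so using the local structure of the smooth pair $(X,Y)$ one reduces to the standard blow-up of affine space along a coordinate subspace. A direct computation of the Jacobian of $\rho$ in a blow-up chart (with $x_0=u_0$, $x_i=u_0u_i$ for $1\le i\le r$, and $E=\{u_0=0\}$) shows that $\mathscr{Q}$ is killed by the local equation $u_0$ of $E$ and is locally free of rank $r$ along $E$. Consequently $\mathscr{Q}=j_*\mathscr{Q}'$ for the $\calO_E$-module $\mathscr{Q}':=j^*\mathscr{Q}$, and it suffices to identify $\mathscr{Q}'$ with $\mathscr{E}$.

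To identify $\mathscr{Q}'$, I would restrict the defining sequence to $E$. As $\rho^*\mathscr{T}_X$ is locally free, applying $j^*$ gives $\mathscr{Q}'=\operatorname{coker}\bigl(d\rho|_E\colon \mathscr{T}_{\tilde{X}}|_E\to \rho^*\mathscr{T}_X|_E=\pi^*(\mathscr{T}_X|_Y)\bigr)$, where I have used $\rho\circ j=i\circ\pi$. Now $d\rho|_E$ is the middle vertical arrow of a morphism of short exact sequences whose top row is the normal bundle sequence $0\to \mathscr{T}_E\to \mathscr{T}_{\tilde{X}}|_E\to \mathscr{N}_{E/\tilde{X}}\to 0$ and whose bottom row is the $\pi$-pullback $0\to \pi^*\mathscr{T}_Y\to \pi^*(\mathscr{T}_X|_Y)\to \pi^*\mathscr{N}_{Y/X}\to 0$ of the normal bundle sequence of $Y\subset X$; the left vertical map is the surjection $d\pi\colon \mathscr{T}_E\to \pi^*\mathscr{T}_Y$ (with kernel $\mathscr{T}_{E/Y}$), and the right vertical map is the induced map $\mathscr{N}_{E/\tilde{X}}=\calO_E(-1)\to \pi^*\mathscr{N}_{Y/X}$, which is the tautological inclusion of the sub-line-bundle. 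Applying the snake lemma, and using that the left arrow is surjective and the right one injective, yields $\mathscr{Q}'\cong \operatorname{coker}\bigl(\calO_E(-1)\hookrightarrow \pi^*\mathscr{N}_{Y/X}\bigr)=\pi^*\mathscr{N}_{Y/X}/\calO_E(-1)=\mathscr{E}$, whence $\mathscr{Q}\cong j_*\mathscr{E}$.

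I expect the main obstacle to be precisely the verification that $\mathscr{Q}$ is an honest $\calO_E$-module and not a sheaf with thicker structure along $E$, since the snake lemma only computes the restriction $j^*\mathscr{Q}$. This is what forces the explicit chart computation; the remaining ingredients are formal, namely the torsion-free injectivity of $d\rho$, the snake lemma, and the standard identification $\mathscr{N}_{E/\tilde{X}}=\calO_E(-1)\subset\pi^*\mathscr{N}_{Y/X}$ already recorded in the conventions around diagram \eqref{eq blow-up diagram}.
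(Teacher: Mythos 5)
Your proof is correct and follows essentially the same route as the paper's: form the cokernel $\mathscr{Q}$ of $d\rho$, observe it is an $\calO_E$-module, restrict to $E$, and identify the cokernel via the snake lemma applied to the two normal bundle sequences of $E\subset\tilde{X}$ and $Y\subset X$. If anything you are more careful than the paper at the one delicate point, namely that $\mathscr{Q}$ is genuinely killed by the ideal of $E$ (the paper simply asserts $\mathscr{Q}\otimes_{\calO_{\tilde{X}}}\calO_E=\mathscr{Q}$ from the support condition, whereas your chart computation actually justifies it).
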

	
	\begin{proof}
		Let $\mathscr{Q}$ denote the quotient sheaf of the natural
		homomorphism $\mathscr{T}_{\tilde{X}} \rightarrow
		\rho^*\mathscr{T}_X$, so that we have a short exact sequence
		\[
		\xymatrix{ 0\ar[r] &\mathscr{T}_{\tilde{X}} \ar[rr]
			&&\rho^*\mathscr{T}_X\ar[rr] &&\mathscr{Q}\ar[r] &0.  }
		\]
		Since $\rho$ is an isomorphism away from $E$, we see that
		$\mathscr{Q}$ is supported on $E$. In particular,
		$\mathscr{Q}\otimes_{\calO_{\tilde{X}}} \calO_E
		=\mathscr{Q}$. Tensoring the above short exact sequence with
		$\calO_E$ gives the exact sequence
		\[
		\xymatrix{ \mathscr{T}_{\tilde{X}}|_E\ar[rr]
			&&\rho^*\mathscr{T}_{X}|_E\ar[rr] &&\mathscr{Q}\ar[r] &0.  }
		\]
		This sequence fits into the following commutative diagram
		\[
		\xymatrix{
			& &0 & 0 & & \\
			&0\ar[r] &\mathscr{N}_{E/\tilde{X}}\ar[r]\ar[u] &\pi^*\mathscr{N}_{Y/X}
			\ar[r]\ar[u] &\mathscr{E}\ar[r] &0\\
			0\ar[r] &\mathscr{T}_{E/Y}\ar[r] &\mathscr{T}_{\tilde{X}}|_E\ar[r]\ar[u]
			&\rho^*\mathscr{T}_X|E \ar[r]\ar[u] &\mathscr{Q}\ar[r]\ar[u]_\alpha &0\\
			0\ar[r] &\mathscr{T}_{E/Y}\ar@{=}[u]\ar[r] &\mathscr{T}_E\ar[r]\ar[u]
			&\pi^*\mathscr{T}_Y\ar[r]\ar[u] &0 & \\
			& &0\ar[u] &0\ar[u] && }
		\]
		The snake lemma implies that $\alpha:\mathscr{Q}\rightarrow
		\mathscr{E}$ is an isomorphism and we thus have an isomorphism $\mathscr{Q}\cong
		j_*\mathscr{E}$ of torsion sheaves on
		$\tilde{X}$. This proves the lemma.
	\end{proof}
	
	Secondly, the following lemma on the behavior of normal bundles under a blow-up
	will be useful.
	
	\begin{lem}\label{lem normal bundle blow up}
		Let $X$ be a smooth projective variety and let $Y,Z\subset X$ be two smooth
		closed subvarieties. Assume that $Z$ is not contained in $Y$ and that the
		scheme-theoretic intersection $Y':=Y\cap Z$ is smooth. Let
		$\rho:\tilde{X}\rightarrow X$ be the blow-up of $X$ along $Y$ and let
		$\tilde{Z}\subset \tilde{X}$ be the strict transform of $Z$. Let
		$\mathscr{N}_{Y,Z/X}$ be the locally free sheaf on $Y'$ that is the quotient of
		$\mathscr{T}_X|_{Y'}$ by the subbundle generated by $\mathscr{T}_{Y}|_{Y'}$ and
		$\mathscr{T}_{Z}|_{Y'}$. Namely,
		\[
		\mathscr{N}_{Y,Z/X} := \frac{\mathscr{T}_X|_{Y'}}{\langle
			\mathscr{T}_{Y}|_{Y'}, \mathscr{T}_{Z}|_{Y'} \rangle}.
		\]
		Then there is a short exact sequence
		\[
		\xymatrix{
			0\ar[r] & \mathscr{N}_{\tilde{Z}/\tilde{X}} \ar[r]
			&\rho'^*\mathscr{N}_{Z/X}\ar[r] &j'_*\pi'^* \mathscr{N}_{Y,Z/X}\ar[r] &0.
		}
		\]
		Here $\rho':\tilde{Z}\rightarrow Z$ is the blow-up morphism\,; $E'\subset
		\tilde{Z}$ is the exceptional divisor with $j': E'\hookrightarrow \tilde{Z}$
		being the closed immersion\,; $\pi': E'\rightarrow Y'$ is the natural
		projection.
		
		In particular, if $Y\subset Z$ then $\mathscr{N}_{\tilde{Z}/\tilde{X}} \cong
		\rho'^*\mathscr{N}_{Z/X}\otimes \calO_{\tilde{Z}}(-E')$\,; if $Z$ and $Y$
		intersect transversally in the sense that $\mathscr{N}_{Y,Z/X}=0$, then
		$\mathscr{N}_{\tilde{Z}/\tilde{X}} \cong \rho'^*\mathscr{N}_{Z/X}$.
	\end{lem}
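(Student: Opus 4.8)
The plan is to realize the desired sequence as the cokernel row of a snake-lemma diagram built from the tangent--normal sequences of $\tilde Z\hookrightarrow\tilde X$ and of $Z\hookrightarrow X$, exactly in the spirit of the proof of Lemma~\ref{lem blow-up tangent}. First I record two elementary inputs. Since $Y'=Y\cap Z$ is smooth, the strict transform $\tilde Z$ is canonically the blow-up $\mathrm{Bl}_{Y'}Z$, with exceptional divisor $E'=E\cap\tilde Z=\PP(\mathscr{N}_{Y'/Z})$, so that $\rho'$, $j'$, $\pi'$ are as in the statement. Second, because $Y'$ is the scheme-theoretic intersection we have $\mathscr{T}_{Y'}=\mathscr{T}_Y|_{Y'}\cap\mathscr{T}_Z|_{Y'}$, and pushing $\mathscr{T}_Z|_{Y'}$ into $\mathscr{N}_{Y/X}|_{Y'}=\mathscr{T}_X|_{Y'}/\mathscr{T}_Y|_{Y'}$ yields, directly from the definition of $\mathscr{N}_{Y,Z/X}$, a short exact sequence of locally free sheaves on $Y'$:
$$0 \to \mathscr{N}_{Y'/Z} \to \mathscr{N}_{Y/X}|_{Y'} \to \mathscr{N}_{Y,Z/X} \to 0. \qquad (\star)$$

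Next I set up the main diagram on $\tilde Z$. The commutative square relating $\tilde Z\hookrightarrow\tilde X$, $\rho'$, $\rho$ and $Z\hookrightarrow X$ induces a morphism from the tangent--normal sequence of $\tilde Z\hookrightarrow\tilde X$ to the $\rho'$-pullback of the tangent--normal sequence of $Z\hookrightarrow X$:
\begin{gather*}
0 \to \mathscr{T}_{\tilde Z} \to \mathscr{T}_{\tilde X}|_{\tilde Z} \to \mathscr{N}_{\tilde Z/\tilde X} \to 0,\\
0 \to \rho'^*\mathscr{T}_Z \to \rho'^*(\mathscr{T}_X|_Z) \to \rho'^*\mathscr{N}_{Z/X} \to 0,
\end{gather*}
with vertical maps $a=d\rho'$, $b=(d\rho)|_{\tilde Z}$ and $c=\phi$ the induced map on normal bundles (using $(\rho^*\mathscr{T}_X)|_{\tilde Z}=\rho'^*(\mathscr{T}_X|_Z)$). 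The second row is exact, being the $\rho'$-pullback of a sequence of vector bundles. I will then run the snake lemma on $(a,b,c)$; the output $0\to\ker c\to\mathrm{coker}\,a\to\mathrm{coker}\,b\to\mathrm{coker}\,c\to 0$ is exactly the sequence I want, once its first three terms are identified.

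The kernels and cokernels of $a$ and $b$ come from Lemma~\ref{lem blow-up tangent}. Applied to the blow-up $\rho':\tilde Z\to Z$ it gives $\ker a=0$ and $\mathrm{coker}\,a=j'_*\mathscr{E}'$ with $\mathscr{E}'=\pi'^*\mathscr{N}_{Y'/Z}/\calO_{E'}(-1)$. For $b$ I restrict the sequence of Lemma~\ref{lem blow-up tangent} for $\rho$ to $\tilde Z$. The crucial point is that $E$ and $\tilde Z$ are Tor-independent: the local equation of the Cartier divisor $E$ restricts to a nonzerodivisor on the integral scheme $\tilde Z$ (as $\tilde Z\not\subset E$), so $\mathcal{T}or_1^{\calO_{\tilde X}}(j_*\mathscr{E},\calO_{\tilde Z})=0$. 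Hence $\ker b=0$ and, by Tor-independent base change along $E'=E\cap\tilde Z$, $\mathrm{coker}\,b=j'_*(\mathscr{E}|_{E'})$ with $\mathscr{E}|_{E'}=\pi'^*(\mathscr{N}_{Y/X}|_{Y'})/\calO_{E'}(-1)$. By naturality of the identifications in Lemma~\ref{lem blow-up tangent}, the snake map $\mathrm{coker}\,a\to\mathrm{coker}\,b$ is $j'_*$ of the map induced by $(\star)$ modulo $\calO_{E'}(-1)$; a short snake lemma applied to the two $\calO_{E'}(-1)$-sequences shows this map is injective with cokernel $j'_*\pi'^*\mathscr{N}_{Y,Z/X}$.

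Feeding this into the snake sequence gives $\ker c=0$ and $\mathrm{coker}\,c=j'_*\pi'^*\mathscr{N}_{Y,Z/X}$, which is the asserted short exact sequence. The two special cases then follow formally: if $Y\subset Z$ then $Y'=Y$ and $\mathscr{N}_{Y,Z/X}=\mathscr{N}_{Z/X}|_Y$, so the cokernel is $j'_*\big((\rho'^*\mathscr{N}_{Z/X})|_{E'}\big)$ and the sequence identifies $\mathscr{N}_{\tilde Z/\tilde X}$ with $\rho'^*\mathscr{N}_{Z/X}\otimes\calO_{\tilde Z}(-E')$; if $\mathscr{N}_{Y,Z/X}=0$ then $\phi$ is an isomorphism. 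The main obstacle is the middle column: establishing the Tor-independence of $E$ and $\tilde Z$ and the resulting base-change identification of $\mathrm{coker}\,b$, together with the naturality needed to recognize the snake map via $(\star)$. Everything else is either standard blow-up geometry or a formal diagram chase.
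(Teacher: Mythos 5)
Your proof is correct and follows essentially the same route as the paper: the paper assembles the two applications of Lemma \ref{lem blow-up tangent} (for $\rho'$ and for $\rho$ restricted to $\tilde Z$) and the tangent--normal sequences into a single $3\times 3$ commutative diagram and reads off the third column, which is exactly your snake-lemma argument, including the identification of the bottom-row cokernel via the sequence $0\to\mathscr{N}_{Y'/Z}\to\mathscr{N}_{Y/X}|_{Y'}\to\mathscr{N}_{Y,Z/X}\to 0$. You supply two details the paper leaves implicit (the Tor-independence of $E$ and $\tilde Z$ justifying the exactness of the restricted middle column, and the naturality identifying the connecting map), which is a welcome addition rather than a divergence.
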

	
	\begin{proof}
		Applying
		Lemma \ref{lem blow-up tangent} twice gives the following commutative
		diagram
		\[
		\xymatrix{
			&0\ar[d] &0\ar[d] &0\ar[d] &\\
			0\ar[r] &\mathscr{T}_{\tilde{Z}}\ar[r]\ar[d]
			&\mathscr{T}_{\tilde{X}}|_{\tilde{Z}}\ar[r]\ar[d]
			&\mathscr{N}_{\tilde{Z}/\tilde{X}}\ar[r]\ar[d] &0\\
			0\ar[r] &(\rho')^*\mathscr{T}_Z\ar[r]\ar[d]
			&(\rho')^*\mathscr{T}_{X}|_Z\ar[r]\ar[d] &(\rho')^*\mathscr{N}_{Z/X}\ar[r]\ar[d]
			&0\\
			0\ar[r] &j'_*\mathscr{E}'\ar[r]\ar[d] &j'_*(\mathscr{E}|_{E'})\ar[r]\ar[d]
			&\mathscr{F}\ar[r]\ar[d] &0\\
			&0 &0 &0 & }
		\]
		where
		$\mathscr{E}':=(\pi')^*\mathscr{N}_{Y'/Z}/\mathcal{O}_{E'}(-1)$. Note that 
		\[
		\mathscr{E}|_{E'} \cong \frac{\pi'^*(\mathscr{N}_{Y/X}|_{Y'})}{\calO_{E'}(-1)}.
		\]
		Then one easily deduce from the last row of the above commutative diagram that
		\[
		\mathscr{F}\cong j'_*\pi'^*\mathscr{N}_{Y,Z/X}
		\]
		and hence the lemma follows.
	\end{proof}
	
	Thirdly, the following lemma explains how the geometric projectivization of the
	inclusion of a sub-bundle into a bundle behaves with respect to the induced
	gradings on the Chow rings.
	
	\begin{lem}\label{lem proj subbundle}
		Let $Y$ be a smooth projective variety endowed with a self-dual multiplicative
		Chow--K\"unneth decomposition. Let $\mathscr{F}$ be a vector bundle
		on $Y$ such that $\mathrm{ch}(\mathscr{F})\in\CH^*(Y)_0$. Let
		$\mathscr{F}'$ be a sub-bundle of $\mathscr{F}$ such that
		$\mathrm{ch}(\mathscr{F}')\in\CH^*(Y)_0$. Let $r+1 =\rk \mathscr{F}$
		and $r'+1=\rk \mathscr{F}'$. Then the natural embedding morphism
		$\varphi: \PP(\mathscr{F}')\hookrightarrow \PP(\mathscr{F})$ is of pure grade
		0,
		where both $\PP(\mathscr{F})$ and $\PP(\mathscr{F}')$ are given the
		self-dual multiplicative Chow--K\"unneth decomposition of a projective bundle as
		in Proposition \ref{prop projbundleselfdualmult}. 
	\end{lem}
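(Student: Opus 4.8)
The plan is to prove that the graph $\Gamma_\varphi \subset \PP(\mathscr{F}') \times \PP(\mathscr{F})$ lies in the grade-$0$ part $\CH^{\dim \PP(\mathscr{F})}(\PP(\mathscr{F}') \times \PP(\mathscr{F}))_0$, which by Definition \ref{defn corr grade} is exactly the assertion that $\varphi$ is of pure grade $0$. Throughout, write $\pi : \PP(\mathscr{F}) \to Y$ and $\pi' : \PP(\mathscr{F}') \to Y$ for the two structure maps and $\xi = c_1(\calO_{\PP(\mathscr{F})}(1))$, $\xi' = c_1(\calO_{\PP(\mathscr{F}')}(1))$ for the relative hyperplane classes. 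Since $\mathrm{ch}(\mathscr{F})$ and $\mathrm{ch}(\mathscr{F}')$ lie in $\CH^*(Y)_0$ — equivalently their total Chern classes do — Proposition \ref{prop projbundleselfdualmult} equips both $\PP(\mathscr{F})$ and $\PP(\mathscr{F}')$ with self-dual multiplicative Chow--K\"unneth decompositions for which $\pi$ and $\pi'$ are of pure grade $0$; by Proposition \ref{prop multstable} and Remark \ref{rmk projselfdual commutes with prod} the product $\PP(\mathscr{F}') \times \PP(\mathscr{F})$ then carries a self-dual multiplicative decomposition, so its grade-$0$ part is a subring and pull-backs along pure-grade-$0$ morphisms respect the grading (Proposition \ref{prop action of correspondence}).

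First I would introduce the fibre product $W := \PP(\mathscr{F}') \times_Y \PP(\mathscr{F})$, a closed subvariety of $\PP(\mathscr{F}') \times \PP(\mathscr{F})$ containing $\Gamma_\varphi$, with inclusion $\iota_W$ and projections $q_1, q_2$ to the two factors. As the preimage of the diagonal under $\pi' \times \pi$, it satisfies $[W] = (\pi' \times \pi)^*[\Delta_Y]$. The class $[\Delta_Y]$ is of pure grade $0$ because $\{\pi_Y^i\}$ is self-dual (the remark preceding Corollary \ref{cor projections grade 0}), and $\pi' \times \pi$ is of pure grade $0$ as a product of pure-grade-$0$ morphisms (Proposition \ref{prop action of correspondence}); hence $[W] \in \CH^{\dim Y}(\PP(\mathscr{F}') \times \PP(\mathscr{F}))_0$.

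Next I would identify $\Gamma_\varphi$ as a Chern class inside $W$. Writing $\mathscr{F}_W$ for the common pull-back of $\mathscr{F}$, the tautological line $L_1 := q_1^*\calO_{\PP(\mathscr{F}')}(-1)$ and the tautological rank-$r$ quotient $Q_2 := q_2^*Q$ of $\mathscr{F}_W$ produce a section of $L_1^\vee \otimes Q_2$ (the composite $L_1 \hookrightarrow \mathscr{F}_W \twoheadrightarrow Q_2$) whose zero locus is precisely $\Gamma_\varphi$; since $\Gamma_\varphi \cong \PP(\mathscr{F}')$ has the expected codimension $r$ in $W$, the section is regular and $[\Gamma_\varphi]_W = c_r(L_1^\vee \otimes Q_2)$. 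Now $L_1^\vee \otimes Q_2 = \iota_W^*\bigl(p_1^*\calO_{\PP(\mathscr{F}')}(1) \otimes p_2^*Q\bigr)$, and the bundle $\widetilde{B} := p_1^*\calO_{\PP(\mathscr{F}')}(1) \otimes p_2^*Q$ on the product has $c_r(\widetilde B) \in \CH^r(\PP(\mathscr{F}')\times\PP(\mathscr{F}))_0$: indeed $c_1(p_1^*\calO(1)) = p_1^*\xi'$ is grade $0$, while the tautological sequence gives $c(Q) = \pi^*c(\mathscr{F})\cdot(1-\xi)^{-1} \in \CH^*(\PP(\mathscr{F}))_0$, so $c(p_2^*Q)$ is grade $0$, and Chern classes of a tensor product with a line bundle are polynomials in these grade-$0$ classes.

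Finally, the projection formula yields $\Gamma_\varphi = (\iota_W)_*\iota_W^* c_r(\widetilde B) = c_r(\widetilde B) \cdot [W]$, a product of two classes of pure grade $0$, hence of pure grade $0$ by multiplicativity of the decomposition on the product. This gives the claim. I expect the only delicate point to be the third step: verifying that $\Gamma_\varphi$ is exactly the regular zero locus of the tautological section $L_1 \to Q_2$, so that its class is the top Chern class $c_r(L_1^\vee\otimes Q_2)$, and then rewriting this class as the restriction of the grade-$0$ class $c_r(\widetilde B)$ on the ambient product; the grading statements themselves are then routine consequences of multiplicativity and of the projective bundle formula \eqref{eq projbundleformula}.
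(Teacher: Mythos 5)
Your argument is correct, but it takes a genuinely different route from the paper's. The paper first proves the weaker statement that $\varphi^*$ and $\varphi_*$ are compatible with the gradings, the key geometric input being that $[\PP(\mathscr{F}')]$ is the top Chern class of $\pi^*(\mathscr{F}/\mathscr{F}')\otimes \calO(1)$ on $\PP(\mathscr{F})$, hence a polynomial in $\xi$ and grade-$0$ classes; it then bootstraps to the graph statement by viewing $\PP(\mathscr{F}')\times \PP(\mathscr{F})$ as the projectivization of $p_2^*\mathscr{F}$ over $\PP(\mathscr{F}')\times Y$, running the same argument for $\mathrm{id}\times\varphi$, and writing $\Gamma_\varphi = (\mathrm{id}\times\varphi)_*\Delta_{\PP(\mathscr{F}')}$ with $\Delta_{\PP(\mathscr{F}')}$ of pure grade $0$ by self-duality. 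You instead compute the class of the graph in closed form, $\Gamma_\varphi = c_r\bigl(p_1^*\calO_{\PP(\mathscr{F}')}(1)\otimes p_2^*Q\bigr)\cdot (\pi'\times\pi)^*\Delta_Y$, exhibiting it directly as a product of grade-$0$ classes; the geometric input is the analogous but different observation that $\Gamma_\varphi$ is the regular zero locus, inside the fibre product $W$, of the tautological composite $L_1\hookrightarrow \mathscr{F}_W\twoheadrightarrow Q_2$ (a rank-$r$ bundle matching the codimension of $\Gamma_\varphi$ in $W$, whereas the paper's bundle has rank $r-r'$ matching the codimension of $\PP(\mathscr{F}')$ in $\PP(\mathscr{F})$). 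Your version is more direct and yields an explicit formula for the graph class, at the price of the transversality check you flag; the paper's version avoids any fibre product and, along the way, records the pullback/pushforward compatibilities in the form in which they are reused elsewhere (e.g.\ for $(j_{E'/E})_*$ in the proof of Proposition \ref{prop admissible blow-up}), though these also follow from your stronger conclusion via Proposition \ref{prop action of correspondence}. All the grading inputs you use ($\Delta_Y$ and $\xi$ of pure grade $0$, the projections and structure maps of pure grade $0$, equivalence of $\mathrm{ch}$ and $c$ lying in grade $0$ over $\Q$) are available from the cited results, so the argument goes through.
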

	\begin{proof}
		We first prove the weaker conclusion that
		\begin{align*}
		\varphi^*\CH^p(\PP(\mathscr{F}'))_s & \subseteq \CH^p(\PP(\mathscr{F}))_s,\\
		\varphi_*\CH^p(\PP(\mathscr{F}'))_s & \subseteq
		\CH^{p+\delta}(\PP(\mathscr{F}))_s,
		\end{align*}
		where $\delta=r-r'$.
		Let $\xi$ (resp. $\xi'$) be the first Chern class of the relative
		$\calO(1)$-bundle on $\PP(\mathscr{F})$
		(resp. $\PP(\mathscr{F}')$). Then we have $\xi'=\varphi^*\xi$.  Let
		$\pi:\PP(\mathscr{F})\rightarrow Y$ and
		$\pi':\PP(\mathscr{F}')\rightarrow Y'$ be the two morphisms. With
		these notations, we have
		\[
		\varphi^*(\xi^l \cdot \pi^* \CH^{p-l}(Y)_s) = \xi'^l\cdot
		\pi'^*\CH^{p-l}(Y)_s.
		\]
		Together with the explicit description \eqref{eq projbundleformula} of the
		graded components of the
		Chow ring of a projective bundle, this implies that $\varphi^*$ is
		compatible with the gradings on the Chow rings. For
		$\alpha\in\CH^{p-l}(Y)_s$, a direct computation yields
		\[
		\varphi_*(\xi'^l\cdot \pi'^*\alpha) = \varphi_* \varphi^*(\xi^l\cdot
		\pi^*\alpha) = [\PP(\mathscr{F}')]\cdot \xi^l\cdot \pi^*\alpha.
		\]
		Thus to show the compatibility of $\varphi_*$ with the gradings, one
		only needs to verify that $[\PP(\mathscr{F}')]$ belongs to
		$\CH^r(\PP(\mathscr{F}))_0$ as a cycle on $\PP(\mathscr{F})$. But then
		this is clear since $\PP(\mathscr{F}')$ can be defined as the
		vanishing locus of a global section of the bundle
		$\pi^*(\mathscr{F}/\mathscr{F'})\otimes \calO(1)$ on
		$\PP(\mathscr{F})$. Hence the cycle class of $\PP(\mathscr{F}')$ is
		equal to the top Chern class of
		$\pi^*(\mathscr{F}/\mathscr{F}')\otimes \calO(1)$, which sits in
		$\CH^r(\PP(\mathscr{F}))_0$ since all the Chern classes of
		$\mathscr{F}/\mathscr{F}'$ are in the graded-0 part. Indeed, denoting
		$c_i = c_i(\mathscr{F}/\mathscr{F}')$, the cycle class of
		$\PP(\mathscr{F}')$ can be expressed as
		\[
		[\PP(\mathscr{F}')] = \xi^\delta + \pi^*c_1\cdot \xi^{\delta-1} +\cdots
		+ \pi^*c_{\delta-1}\cdot \xi +\pi^*c_\delta,\quad \text{in
		}\CH^*(\PP(\mathscr{F}))_0.
		\]
		
		The product $\PP(\mathscr{F}')\times \PP(\mathscr{F})$ can be viewed as the
		projectivization of $p_2^*\mathscr{F}$ on $\PP(\mathscr{F}')\times X$ and
		$\PP(\mathscr{F}')\times \PP(\mathscr{F}')$ is the projectivization of the
		subbundle $p_2^*\mathscr{F}'\subseteq p_2^*\mathscr{F}$. The same argument shows
		that the action of the morphism
		\[
		\mathrm{id}\times \varphi: \PP(\mathscr{F}')\times \PP(\mathscr{F}')
		\longrightarrow \PP(\mathscr{F}')\times \PP(\mathscr{F})
		\]
		on the Chow groups is compatible with the gradings. In particular
		\[
		\Gamma_{\varphi} = (\mathrm{id}\times \varphi)_*\Delta_{\PP(\mathscr{F}')}
		\]
		is of pure grade 0.
	\end{proof}

	We are now in a position to prove Proposition \ref{prop admissible blow-up}.
	
	\begin{proof}[Proof of Proposition \ref{prop admissible blow-up}] That
		$\mathrm{Bl}_Y( \mathcal{S})$ is complete is obvious.  Indeed, take $Y_1,Y_2\in
		\mathcal{S}$ which are not contained in $Y$ and let
		$\tilde{Y}_1,\tilde{Y}_2\in\mathrm{Bl}_Y(\mathcal{S})$ be their strict
		transforms. If $Y'=Y_1\cap Y_2$ is not contained in $Y$, then the strict
		transform $\tilde{Y}'$ of $Y'$ is the intersection of $\tilde{Y}_1$ and
		$\tilde{Y}_2$ and $\tilde{Y}'\in\mathrm{Bl}_Y(\mathcal{S})$\,; if $Y'\subset Y$,
		then $\tilde{Y}_1$ and $\tilde{Y}_2$ do not meet each other since otherwise
		$\mathscr{T}_{Y_1}$ and $\mathscr{T}_{Y_2}$ do not intersect in constant rank
		and hence $Y'$, as a scheme-theoretic intersection, is not reduced. Note that
		here we use the condition \emph{(ii)} of completeness of $\mathcal{S}$ in an
		essential
		way\,; see Definition \ref{def complete admissible}.
		
		Let $Y_j\in\mathcal{S}$ be such that $Y_j$ is not
		contained in $Y$. By completeness of $\mathcal{S}$, we see that
		$Y_j\cap Y\in \mathcal{S}$. The strict transform $\tilde{Y}_j$ of
		$Y_j$ is the blow-up of $Y_j$ along $Y_j\cap Y$. Since
		$\mathcal{S}$ is admissible, we conclude from Proposition \ref{prop
			adm mult} that $\tilde{Y}_j$ is smooth and naturally endowed with
		a self-dual multiplicative Chow--K\"unneth decomposition. \medskip
		
		We first prove that $\mathrm{Bl}_Y( \mathcal{S})$ is admissible for
		the special case
		\[
		\mathcal{S} = \set{X, Y, Z, Y'=Y\cap Z},
		\]
		where $Z$ is not contained in $Y$. In this case, we still use
		$\tilde{X}$ to denote the blow-up of $X$ with center $Y$. Let
		$\tilde{Z}\subset \tilde{X}$ be the strict transform of $Z$. Then
		$\tilde{Z}$ is simply the blow-up $\mathrm{Bl}_{Y'}(Z)$ of $Z$ along
		$Y'$ and we have
		\[
		\mathrm{Bl}_Y(\mathcal{S}) = \set{\tilde{Z},\tilde{X}}.
		\]
		We have seen that both $\tilde{Z}$ and $\tilde{X}$ have a
		self-dual multiplicative Chow--K\"unneth decomposition. To prove the
		proposition, we still need to verify the following conditions: (a)
		the Chern classes of $\mathscr{N}_{\tilde{Z}/\tilde{X}}$ sit in
		$\CH^*(\tilde{Z})_0$\,; (b) the inclusion morphism $\tilde{Z}\hookrightarrow
		\tilde{X}$ is of pure grade 0. 
		To do that, we will first prove (a) and a weaker conclusion (b') the
		push-forward via the
		embedding $j_{\tilde{Z}}:\tilde{Z}\hookrightarrow \tilde{X}$ are
		compatible with the gradings of the Chow rings. 
		Consider the following diagram
		\begin{equation}
		\xymatrix{
			E\ar[r]^j\ar[d]_\pi &\tilde{X}\ar[d]^\rho & \tilde{Z} 
			\ar[l]^{j_{\tilde{Z}}}\ar[d]^{\rho'}  &
			E'\ar[l]^{j'}\ar[d]^{\pi'} 
			\ar@/_1pc/[ll]_{j_{E'}}\\
			Y\ar[r]^i &X &Z\ar[l]_{i_Z} &Y'\ar[l]_{i'}\ar@/^1pc/[ll]^{i_{Y'}}
		}
		\end{equation}
		where the two extremal squares are the blow-up squares. 
		
		It follows from Lemma \ref{lem normal bundle blow up} that
		\begin{equation*}
		\mathrm{ch}(\mathscr{N}_{\tilde{Z}/\tilde{X}}) 
		=(\rho')^*\mathrm{ch}(\mathscr{N}_{Z/X}) -
		\mathrm{ch}(j'_*\pi'^*\mathscr{N}_{Y,Z/X}).
		\end{equation*}
		Applying the Grothendieck--Riemann--Roch theorem to the morphism
		$j'$ yields
		\[
		\mathrm{ch}(j'_*\pi'^*\mathscr{N}_{Y,Z/X}) = j'_*\left(
		\frac{\pi'^*\mathrm{ch}(\mathscr{N}_{Y,Z/X})}{\mathrm{td}(\mathscr{N}_{E'/\tilde{Z}})}\right).
		\]
		We also note that there is a short exact sequence
		\[
		\xymatrix{
			0\ar[r] &\mathscr{N}_{Y'/Y}\oplus \mathscr{N}_{Y'/Z}\ar[r]
			&\mathscr{N}_{Y'/X}\ar[r] &\mathscr{N}_{Y,Z/X}\ar[r] &0.
		}
		\]
		By assumption the set $\mathcal{S}$ is admissible so that the Chern
		classes of $\mathscr{N}_{Y'/Z}$, $\mathscr{N}_{Y'/Y}$ and $\mathscr{N}_{Y'/X}$
		are in $\CH^*(Y')_0$. Then we conclude that 
		\[
		\mathrm{ch}(\mathscr{N}_{Y,Z/X})\in\CH^*(Y')_0.
		\]
		Using the
		explicit description \eqref{eq blowupformula} of the graded pieces of the Chow
		ring of a smooth blow-up  and the compatibility of $j'_*$ with the grading of
		the Chow groups,
		we see that $\mathrm{ch}(j'_*\pi'^*\mathscr{N}_{Y,Z/X})$ sits in
		$\CH^*(\tilde{Z})_0$. Hence it follows that
		$\mathrm{ch}(\mathscr{N}_{\tilde{Z}/\tilde{X}})$ sits in
		$\CH^*(\tilde{Z})_0$. This establishes (a).
		
		The decomposition of the Chow groups of $\tilde{Z}$ is given, as in
		Proposition \ref{prop multCK blow-up}, by
		\begin{equation}\label{eq grade s Z}
		\CH^p(\tilde{Z})_s = (\rho')^*\CH^p(Z)_s
		\oplus\left(\bigoplus_{l=0}^{r'-2} j'_*(\xi'^l \cdot
		\pi'^*\CH^{p-l-1}(Y')_s)\right),\quad r'=\dim Z- \dim Y'.
		\end{equation}
		Similarly the decomposition of the Chow groups of $\tilde{X}$ is given
		by
		\begin{equation}\label{eq grade s Z tilde}
		\CH^p(\tilde{X})_s = \rho^*\CH^p(X)_s
		\oplus\left(\bigoplus_{l=0}^{r-2} j_*(\xi^l \cdot
		\pi^*\CH^{p-l-1}(Y)_s)\right),\quad r=\dim X- \dim Y.
		\end{equation}
		Let $f:Y'\hookrightarrow Y$ be the inclusion morphism. For any
		$\alpha\in\CH_k(Z)$, we claim that
		\begin{equation}\label{eq gen fulton}
		\rho^*(j_Z)_*\alpha =( j_{\tilde{Z}})_*\rho'^*\alpha + j_*\left(
		c(\mathscr{E})\cap \pi^*f_*(s(Y', Z)\cdot i'^*\alpha) \right)_{k},\qquad
		\text{with }\mathscr{E}=\frac{\pi^*\mathscr{N}_{Y/X}}{\calO_E(-1)},
		\end{equation}
		where $(-)_k$ means taking the $k$-dimensional component and $s(Y',Z)$ is the
		Segre class of $Y'$ in $Z$. (We refer to \cite[\S 4.2]{fulton} for the
		definition and properties of Segre classes.) This claim can be proved as
		follows. We may assume that $\alpha$ is represented by a closed subvariety
		$W\subseteq Z$ which intersects $Y'$ properly. Let $\tilde{W}\subseteq
		\tilde{Z}$ be the strict transform of $W$ and set $W':=W\cap Y'$. Since $W$
		intersects $Y'$ properly, we see that
		\[
		\rho'^*\alpha = \tilde{W},\qquad \text{in }\CH_k(\tilde{Z}).
		\]
		We apply \cite[Theorem 6.7]{fulton} to $W$ with respect to the blow-up morphism $\rho$
		and get
		\[
		\rho^*(j_Z)_*\alpha = (j_{\tilde{Z}})_*\tilde{W} + j_* \big(c(\mathscr{E})\cap
		\pi^*f_*(j_{W'})_*s(W',W)\big)_k,
		\]
		where $j_{W'}:W'\rightarrow Y'$. Since $W$ intersects $Y'$ properly, we have
		$s(W',W)= (j_{W'})^*s(Y',Z)$. Thus we obtain
		\begin{align*}
		\rho^*(j_Z)_*\alpha& = (j_{\tilde{Z}})_*\tilde{W} + j_*\big( c(\mathscr{E})\cap
		\pi^*f_*(j_{W'})_*(j_{W'})^*s(Y',Z) \big)_k\\
		& = (j_{\tilde{Z}})_*\tilde{W} +  j_*\big( c(\mathscr{E})\cap
		\pi^*f_*([W']\cdot s(Y',Z)) \big)_k\\
		& = (j_{\tilde{Z}})_*\rho'^*\alpha + j_*\big( c(\mathscr{E})\cap
		\pi^*f_*(i'^*\alpha\cdot s(Y',Z)) \big)_k,
		\end{align*}
		which settles the claim. We also note that $s(Y',Z)$ is the inverse of
		$c(\mathscr{N}_{Y'/Z})$ and hence lies in the graded-0 part. It is also clear
		that $c(\mathscr{E})$ lies in the graded-0 part. If $\alpha\in\CH^p(Z)_s$, then
		equation \eqref{eq gen fulton} implies that
		\[
		(j_{\tilde{Z}})_* \rho'^*\alpha \in
		\CH^{p+\dim X -\dim
			Z}(\tilde{X})_s.
		\]
		If $\alpha\in \CH^{p-l-1}(Y')_s$, then
		\begin{align*}
		(j_{\tilde{Z}})_*j'_*(\xi'^l\cdot \pi'^*\alpha)
		& = (j_{\tilde{Z}})_* j'_*(j'^*(-E')^l\cdot \pi'^*\alpha)\\
		& = (j_{\tilde{Z}})_*((-E')^l\cdot j'_*\pi'^*\alpha)\\
		& = (j_{\tilde{Z}})_*\Big((j_{\tilde{Z}})^* (-E)^l\cdot
		j'_*\pi'^*\alpha\Big)\\
		& = (-E)^l\cdot (j_{\tilde{Z}})_*j'_*\pi'^*\alpha.
		\end{align*}
		Let $j_{E'/E}:E'\hookrightarrow E$ be the embedding morphism. According to
		Lemma \ref{lem proj subbundle} the push-forward map $(j_{E'/E})_*$ respects
		the gradings of the Chow groups. Thus we obtain
		\[
		(j_{\tilde{Z}})_*j'_*(\xi'^l\cdot \pi'^*\alpha) = (-E)^l\cdot
		(j_{\tilde{Z}})_*j'_*\pi'^*\alpha = (-E)^l\cdot j_*(j_{E'/E})_*
		\pi'^*\alpha\in\CH^*(\tilde{X})_s.
		\]
		This proves (b'). 
		
		We now deal with the stronger condition (b). Note that the set
		\[
		\mathcal{S}' = \{ \tilde{Z}\times X, \tilde{Z}\times Y,\tilde{Z}\times Z,
		\tilde{Z}\times Y'\}
		\]
		is admissible and hence the same argument above shows that the push-forward by
		the inclusion $1\times j_{\tilde{Z}}: \tilde{Z}\times \tilde{Z}\hookrightarrow
		\tilde{Z}\times \tilde{X}$ respects the grading on Chow groups. Hence the graph
		of $j_{\tilde{Z}}$, which equals the push-forward of $\Delta_{\tilde{Z}}$, is of
		pure grade 0. This establishes (b). \medskip
		
		We now move on to the general case. Let $Z_1\subset Z_2$ where
		$Z_1,Z_2\in \mathcal{S}$ are such that none of them is contained in
		$Y$.  Then the conditions for the admissibility for the inclusion
		$\tilde{Z}_1\subset \tilde{Z}_2$ can be checked by applying the above
		special case to
		\[
		\mathcal{S}'' =\set{W'= Y\cap Z_1, W=Y\cap Z_2,Z_1,Z_2}.
		\]
		More precisely, we replace $Y$ by $W$, $Y'$ by $W'$, $Z$ by $Z_1$ and
		$X$ by $Z_2$. This settles the proposition.
	\end{proof}

	\section{Resolving the rational map $X^3 \dashrightarrow X^{[3]}$}
	Let $X$ be a smooth projective variety of dimension $d$. Our strategy for
	proving Theorem
	\ref{thm main} consists in desingularizing the rational map $X^3
	\dashrightarrow X^{[3]}$ by blowing up $X^3$ several times in such a
	way that the variety obtained after each blow-up has a multiplicative
	Chow--K\"unneth decomposition, and in such a way that after the final
	blow-up necessary to resolve the map the Chow--K\"unneth decomposition
	descends to a Chow--K\"unneth decomposition of $X^{[3]}$.
	
	\subsection{A stratification of $X^{[3]}$}\label{sec stratification}
	In this subsection, we explicitly describe some canonical subvarieties of
	$X^{[3]}$. These subvarieties will play important roles in the study of the
	resulting morphism after resolving the rational map $X^3\dashrightarrow
	X^{[3]}$. Such subvarieties also appear in the cellular decomposition considered
	in \cite{cheah} and earlier works.
	
	A general point of $X^{[3]}$ corresponds to a reduced length-3 subscheme of $X$,
	or equivalently to an unordered set of three distinct points of $X$. When two of
	those
	points coincide, one gets a non-reduced subscheme of length 3. For any point
	$\xi\in X^{[3]}$, we denote $Z_\xi\subset X$  the corresponding closed subscheme
	of $X$. Let 
	\begin{equation}\label{eq B1}
	B_1 := \{\xi\in X^{[3]}: Z_\xi\text{ is a non-reduced subscheme of }X\}.
	\end{equation}
	Then $B_1$ is a divisor on $X^{[3]}$. Let
	\begin{equation}\label{eq B2}
	B_2 := \{\xi\in X^{[3]}: Z_\xi \text{ is supported on a single point of }X\}.
	\end{equation}
	Assume that $Z=Z_\xi\subset X$ is a length-3 subscheme supported at a single
	point $x\in X$. Let $\mathscr{I}\subset \calO_X$ be the ideal sheaf of $Z$ and
	let $m_x\subset \calO_X$ be the maximal ideal corresponding to the point $x$.
	There are two cases. 
	
	The first case is $m_x^2\subset\mathscr{I}$, or equivalently the image of
	$\mathscr{I}$ in $m_x/m_x^2$ has dimension $d-2$. In this case, $\mathscr{I}$
	determines a $(d-2)$-dimensional subspace of $\Omega_{X,x}$ which, by
	dualization, gives a 2-dimensional subspace $\mathscr{V}_x$ of
	$\mathscr{T}_{X,x}$. The point $\xi$ is actually determined by $\mathscr{V}_x$
	as follows. Take some local coordinates $\{t_1,t_2,t_3,\ldots, t_d\}$
	of $X$ at the point $x$ such that $\mathscr{V}_x$ is the kernel of
	$\{\mathrm{d}t_3,\ldots,\mathrm{d}t_{d}\}$ at the point $x$. Then we have
	\[
	\mathscr{I}=(t_1^2, t_1t_2, t_2^2, t_3,\ldots,t_d)
	\]
	on an open neighborhood of $x$. If we define
	\begin{equation}\label{eq B3}
	B_3 = \{\xi\in X^{[3]}: Z_\xi\text{ is supported in a single point }x\text{
		such that }m_x^2\subseteq \mathscr{I}_{Z_\xi}\},
	\end{equation}
	then we have the natural isomorphism
	\begin{equation}\label{eq B3 iso}
	B_3\cong \mathrm{Gr}(2,\mathscr{T}_X).
	\end{equation}
	
	The second case is when the image of $\mathscr{I}$ in $m_x/m_x^2$ has dimension
	$d-1$. Dually, this determines a 1-dimensional subspace of $\mathscr{T}_{X,x}$.
	Pick  local coordinates $\{t_1,\ldots,t_d\}$ of $X$ at $x$ such that
	$\mathrm{d}t_1,\ldots,\mathrm{d}t_{d-1}$ generate the image of $\mathscr{I}$ in
	$m_x/m_x^2$. Then we have
	\[
	\mathscr{I} = (t_1+a_1 t_d^2, t_2+a_2 t_d^2,\ldots, t_{d-1}+ a_{d-1}t_d^2,
	t_d^3)
	\]
	for some $a_1,a_2,\ldots, a_{d-1}$ in the base field. This implies that
	$B_2\backslash B_3$ is an $\mathds{A}^{d-1}$-bundle over $\PP(\mathscr{T}_X)$.
	Globalizing the above picture yields
	\[
	B_2\backslash B_3 \cong \PP(\mathscr{E})\backslash
	\PP(\mathscr{T}_{\PP(\mathscr{T}_X)/X}),
	\]
	where $\mathscr{E}$ is a locally free sheaf of rank $d$ on $\PP(\mathscr{T}_X)$
	and $\mathscr{T}_{\PP(\mathscr{T}_X)/X}\subset \mathscr{E}$ is naturally a
	subbundle\,; see \eqref{eq E ses}.

	\subsection{Desingularizing the rational map $X^3 \dashrightarrow
		X^{[3]}$} \label{sec desing} 
	
	In this subsection we first describe how to resolve the map $X^3 \dashrightarrow
	X^{[3]}$ for any smooth projective variety $X$. Then we move on to a careful
	study of the resulting morphism $p:X_3\rightarrow X^{[3]}$ and this will be
	essential to descend the Chow--K\"unneth decomposition from $X_3$ to
	$X^{[3]}$.\medskip
	
	Consider the following smooth closed subvarieties of $X^3$\,:
	\begin{align*}
	\Delta_{12} &: =\{(x,x,y) : x,y \in X\} \subset X^3,\\
	\Delta_{23} &: = \{(y,x,x) : x,y \in X\}\subset X^3,\\
	\Delta_{13} &: =\{(x,y,x) : x,y \in X\}\subset X^3,\\
	\Delta_{123} &: = \{(x,x,x) : x \in X\}\subset X^3.
	\end{align*}
	In practice, we also think of $\Delta_{ij}$ as a morphism
	\begin{equation}\label{eq phi ij}
	\phi_{ij}: X\times X \rightarrow X^3,
	\end{equation}
	such that $\phi_{ij}(x,y)$ is the point whose $i^\text{th}$ and $j^\text{th}$
	coordinates are $x$ and the remaining coordinate is $y$.
	We denote $X_1$ the blow-up of $X_0 := X^3$ along the small diagonal
	$\Delta_{123}$ and $\tilde{\Delta}_{ij}$ the strict transforms of the
	big diagonals $\Delta_{ij}$. Note that $\tilde{\Delta}_{12}$,
	$\tilde{\Delta}_{23}$ and $\tilde{\Delta}_{13}$ are pairwise disjoint
	as subvarieties of $X_1$. We then denote $X_2$ the blow-up of $X_1$
	along the disjoint union of the $\tilde{\Delta}_{ij}$. If $X$ is a surface, then
	the rational map $X_2\dashrightarrow X^{[3]}$ is already a morphism. If $\dim
	X\geq 3$ then, as explained in
	the proof of \cite[Lemma 3.12]{voisin}, the rational map $X_2
	\dashrightarrow X^{[3]}$ is not yet a morphism, but it does become a
	morphism after one more blow-up that we describe now.
	
	Let $E_1\subset X_1$ be the exceptional divisor of the blow-up
	$\rho_1:X_1\rightarrow X_0$, where $X_0:=X^3$, along the small diagonal
	$Y_0:=\Delta_{123}$. The tangent bundle $\mathscr{T}_{X_0}$ restricted
	to $Y_0=\Delta_{123} \cong X$ is naturally isomorphic to
	$\mathscr{T}_X\oplus \mathscr{T}_X \oplus \mathscr{T}_X$. The tangent
	bundle $\mathscr{T}_{Y_0}\cong \mathscr{T}_X$ maps diagonally into
	$\mathscr{T}_{X_0}|_{Y_0}$. Let $\mathscr{N}_{Y_0/X_0}$ be the normal
	bundle of $Y_0$ in $X_0$.  There is a commutative diagram
	\begin{equation} \label{eq comm} \xymatrix{ 0\ar[r]
		&\mathscr{T}_{Y_0}\ar[rr]\ar[d]^{\cong}
		&&\mathscr{T}_{X_0}\ar[rr]\ar[d]^{\cong}
		&&\mathscr{N}_{Y_0/X_0}\ar[r]\ar[d]^{\cong} &0\\
		0\ar[r] &\mathscr{T}_X \ar[rr]^{\delta\qquad} &&\mathscr{T}_X
		\oplus \mathscr{T}_X \oplus \mathscr{T}_X\ar[rr]^{\quad\theta}
		&&\mathscr{T}_X\oplus \mathscr{T}_X\ar[r] &0, }
	\end{equation}
	where $\delta(a)= (a,a,a)$ and $\theta(a,b,c)=(a-b,b-c)$. 
	Hence we get an identification 
	\begin{equation}\label{eq isom E_1}
	E_1\cong \PP(\mathscr{T}_X\oplus \mathscr{T}_X).
	\end{equation}
	Let $W'\subset \PP(\mathscr{T}_X\oplus \mathscr{T}_X)$ be the subvariety
	of all vectors $(a,b)$ such that $a$ and $b$ are colinear in
	$\mathscr{T}_X$. First we note that there is a natural isomorphism
	\[
	\PP^1\times \PP(\mathscr{T}_X)\rightarrow W',\qquad ([s:t],[v])\mapsto
	[(sv,tv)],
	\]
	where $[s:t]$ are the homogeneous coordinates of $\PP^1$ and
	$[v]\in\PP(\mathscr{T}_X)$ is the class of a non-zero vector $v\in
	\mathscr{T}_X$. Under the isomorphism $\mathscr{N}_{Y_0/X_0}\cong
	\mathscr{T}_X\oplus \mathscr{T}_X$, the variety ${W}'$ corresponds to
	a smooth closed subvariety
	\begin{equation} \label{eq W}
	{W}\subset E_1=\PP(\mathscr{N}_{Y_0/X_0}) \subset X_1,
	\end{equation}
	and there is an isomorphism
	\begin{equation}\label{eq isom W}
	W\cong \PP^1\times \PP(\mathscr{T}_X).
	\end{equation}
	Let $\tilde{W}$ be the strict transform of $W$ under the blow-up $X_2
	\rightarrow X_1$. Writing $X_3$ for the smooth blow-up of $X_2$ along
	$\tilde{W}$, we then have (see also Proposition \ref{prop summary})
	
	\begin{prop} \label{prop desing} The rational map $p: X_3\dashrightarrow
		X^{[3]}$ is a generically finite morphism. Moreover, the natural
		action of the symmetric group $\mathfrak{S}_3$ on $X^3$ lifts to $X_3$ and
		the morphism $p: X_3\rightarrow X^{[3]}$ factors through the quotient.
	\end{prop}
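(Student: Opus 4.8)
The plan is to resolve the rational map $X^3 \dashrightarrow X^{[3]}$ via the three explicit blow-ups already constructed ($X^3 = X_0 \leftarrow X_1 \leftarrow X_2 \leftarrow X_3$) and then verify the two assertions separately: first that $p$ is a genuine (generically finite) morphism, and second that it factors through the $\mathfrak{S}_3$-quotient. For the morphism claim, I would follow Voisin's argument in the proof of \cite[Lemma 3.12]{voisin}. The rational map $X_2 \dashrightarrow X^{[3]}$ is defined by assigning to a generic point its associated length-$3$ subscheme; the content is that this extends to a \emph{morphism} after blowing up $\tilde W$. The strategy is to check this locally: work analytically around a point of the small diagonal $\Delta_{123}$, choose local coordinates, and write down explicitly the length-$3$ ideal associated to a point of $X_3$. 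One uses the stratification of Section \ref{sec stratification}: the exceptional loci $E_1, E_2$ and the final exceptional divisor over $\tilde W$ map onto the strata $B_1, B_2, B_3$ of $X^{[3]}$, and the point is precisely that along $W \cong \PP^1 \times \PP(\mathscr{T}_X)$ the naive limit subscheme is not well-defined (the two colinear tangent directions degenerate), which is why the extra blow-up along $\tilde W$ is needed. After this last blow-up, the universal subscheme extends, giving a morphism $p$; that $p$ is generically finite is clear since it is birational onto its image away from the exceptional loci, the generic fiber over a reduced length-$3$ subscheme being the $|\mathfrak{S}_3| = 6$ orderings of the three points.

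\textbf{Lifting the $\mathfrak{S}_3$-action.} The symmetric group $\mathfrak{S}_3$ acts on $X^3$ by permuting the factors, and all the centers of the successive blow-ups are $\mathfrak{S}_3$-invariant: the small diagonal $\Delta_{123}$ is fixed pointwise up to the permutation action, the union $\tilde\Delta_{12} \sqcup \tilde\Delta_{23} \sqcup \tilde\Delta_{13}$ is permuted among its components, and the locus $W \subset E_1$ of colinear tangent vectors is invariant under the induced action on $\PP(\mathscr{T}_X \oplus \mathscr{T}_X)$ (colinearity is preserved by any permutation of coordinates). By functoriality of blow-ups along invariant centers, the action lifts successively to $X_1$, $X_2$, and $X_3$. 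I would state this as the observation that each blow-up of a $G$-invariant center carries a canonical $G$-action making the blow-down map equivariant.

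\textbf{Factoring through the quotient.} It remains to show $p : X_3 \to X^{[3]}$ is $\mathfrak{S}_3$-invariant, i.e. $p \circ g = p$ for all $g \in \mathfrak{S}_3$, so that $p$ descends to $X_3/\mathfrak{S}_3 \to X^{[3]}$. This is because the length-$3$ subscheme associated to a point of $X_3$ depends only on the \emph{unordered} data: over the open locus of $X^3$ where the three points are distinct, $p$ sends $(x_1,x_2,x_3)$ to the reduced subscheme $\{x_1,x_2,x_3\}$, which is manifestly symmetric in the coordinates; since $X_3$ is a blow-up of $X^3$ along invariant centers, this open set is dense and $\mathfrak{S}_3$-stable, and two morphisms to the separated scheme $X^{[3]}$ agreeing on a dense open set coincide. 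Hence $p$ is $\mathfrak{S}_3$-invariant and factors as claimed.

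\textbf{The main obstacle} will be the first part: verifying explicitly that the map becomes a morphism precisely after blowing up $\tilde W$, rather than at an earlier or later stage. This requires the local coordinate computation of the flat limit of the length-$3$ subscheme as the three points collide, and identifying the indeterminacy locus of $X_2 \dashrightarrow X^{[3]}$ with $\tilde W$. The subtlety, as in the curvilinear-versus-non-curvilinear distinction underlying the stratum $B_3 \cong \mathrm{Gr}(2, \mathscr{T}_X)$, is that along colinear configurations the limit scheme is not determined by the second-order data alone; resolving this is exactly what the blow-up along $\tilde W$ accomplishes, and making this precise is the technical heart of the proposition. The equivariance and quotient-factoring steps, by contrast, are formal once invariance of all centers is recorded.
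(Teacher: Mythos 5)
Your proposal takes essentially the same route as the paper, whose entire proof of this proposition consists of a reference to the proof of Voisin's Lemma 3.12 together with the explicit description of the blow-ups given afterwards; your additional formal arguments for lifting the $\mathfrak{S}_3$-action along invariant centers and for descending $p$ via density of the locus of distinct points and separatedness of $X^{[3]}$ are correct and are left implicit in the paper. One inessential slip: by Proposition \ref{prop summary} it is the strict transform $E''_1$ of $E_1$ that is contracted to $B_3$ and the exceptional divisor $E_W$ over $\tilde{W}$ that is contracted to $B_2$, while the $\tilde{E}_{ij}$ dominate $B_1$, so your matching of exceptional loci to strata is permuted.
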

	\begin{proof}
		We refer to the proof of \cite[Lemma 3.12]{voisin}. This can also be seen from
		the description of the blow-ups below.
	\end{proof}
	
	For our purpose, we need to understand the morphism $X_3\rightarrow X^{[3]}$ in
	more depth.
	
	The natural subvarieties of $X_1$ have explicit geometric descriptions. For
	example, we first have 
	$$
	\tilde{\Delta}_{ij} \cong  \mathrm{Bl}_{\Delta_X}(X\times X).
	$$
	Note that $\tilde{\Delta}_{ij}\cap E_1\cong \PP(\mathscr{T}_X)$ for all $i,j$
	and under the identification \eqref{eq isom E_1} these can be described as
	follows\,:
	\begin{align}
	W_{12}:=\tilde{\Delta}_{12}\cap E_1 \subset E_1&:\qquad
	\PP(\mathscr{T}_X)\hookrightarrow \PP(\mathscr{T}_X\oplus \mathscr{T}_X),\quad
	[v]\mapsto [0, v]\,; \nonumber \\
	W_{23}:=\tilde{\Delta}_{23}\cap E_1 \subset E_1&:\qquad
	\PP(\mathscr{T}_X)\hookrightarrow \PP(\mathscr{T}_X\oplus \mathscr{T}_X),\quad
	[v]\mapsto [v,0]\,; \label{eq Wij} \\
	W_{13}:=\tilde{\Delta}_{13}\cap E_1 \subset E_1&:\qquad
	\PP(\mathscr{T}_X)\hookrightarrow \PP(\mathscr{T}_X\oplus \mathscr{T}_X),\quad
	[v]\mapsto [v, -v]. \nonumber
	\end{align}
	From the definition of $W$, one immediately sees that $W_{ij}\subset W$. Under
	the identification \eqref{eq isom W} we have
	\[
	W_{12} =[0:1]\times \PP(\mathscr{T}_X),\quad W_{23} =[1:0]\times
	\PP(\mathscr{T}_X),\quad  W_{13} =[1:-1]\times \PP(\mathscr{T}_X).
	\]
	
	Let $E_{ij}\subset X_2$ be the exceptional divisor sitting above
	$\tilde{\Delta}_{ij}$. Thinking of $\Delta_{ij}$ as being isomorphic to $X
	\times X$ as in  \eqref{eq phi ij}, we have
	\[
	\mathscr{N}_{\Delta_{ij}/X_0} = p_1^* \mathscr{T}_X,
	\]
	where $p_1:X\times X\rightarrow X$ is the projection onto the first factor. It
	follows from Lemma \ref{lem normal bundle blow up} that 
	\[
	\mathscr{N}_{\tilde{\Delta}_{ij}/X_1} \cong \tilde{p}_1^*\mathscr{T}_X \otimes
	\calO_{\tilde{\Delta}_{ij}} (-E_{\Delta}),
	\]
	where we identify $\tilde{\Delta}_{ij}$ with $\mathrm{Bl}_{\Delta_X}(X\times X)$
	and $\tilde{p}_1$ is the composition of $p_1:X\times X\rightarrow X$ with the
	blow-up morphism $\mathrm{Bl}_{\Delta_X}(X\times X)\rightarrow X\times X$. It
	follows that $E_{ij}\cong \PP(\tilde{p}_1^*\mathscr{T}_X)$, or equivalently
	$E_{ij}$ is the blow-up of $\PP(\mathscr{T}_X)\times X$ along the graph of
	$\pi:\PP(\mathscr{T}_X)\rightarrow X$. Let $E'_1\subset X_2$ be the strict
	transform of $E_1$. Then $E'_1$ is the blow-up of $E_1$ along the disjoint union
	of the subvarieties $W_{ij}$. One easily sees that
	\[
	E'_{1,ij}:=E'_1 \cap E_{ij}\cong \PP(\mathscr{T}_X)\times_X\PP(\mathscr{T}_X).
	\]
	Furthermore, $E'_{1,ij}$ is the exceptional divisor of the blow-up
	$E'_1\rightarrow E_1$ and also the exceptional divisor of the blow-up
	$E_{ij}\cong\mathrm{Bl}_{\Gamma_\pi} (\PP(\mathscr{T}_X)\times X)\rightarrow
	\PP(\mathscr{T}_X)\times X$. Each blow-up contracts one of the two
	$\PP(\mathscr{T}_X)$-factors respectively. We also see that the strict transform
	$\tilde{W}\subset E'_1$ is isomorphic to $W$.

	Under the identifications  \eqref{eq isom E_1} and \eqref{eq isom W}, the
	inclusion $W\subset E_1$ is induced by
	\[
	p_1^* \calO_{\PP^1}(-1)\otimes
	p_2^*\calO_{\PP(\mathscr{T}_X)}(-1)\hookrightarrow \C^2\otimes \mathscr{T}_X
	\cong \mathscr{T}_X^{\oplus 2},
	\]
	where $p_i$ are the two projections from $W=\PP^1\times \PP(\mathscr{T}_X)$ to
	its factors.
	In particular, 
	\begin{equation}\label{eq O1 on W}
	\calO_{E_1}(1)|_W = p_1^* \calO_{\PP^1}(1)\otimes
	p_2^*\calO_{\PP(\mathscr{T}_X)}(1). 
	\end{equation}
	We have the following natural isomorphisms
	\begin{align*}
	\mathscr{T}_{W/X} & \cong \mathscr{T}_{\PP^1}\oplus
	\mathscr{T}_{\PP(\mathscr{T}_X)/X} = \frac{\C^2\otimes
		\calO_{\PP^1}(1)}{\calO_{\PP^1}} \oplus \frac{\mathscr{T}_X\otimes
		\calO_{\PP(\mathscr{T}_X)}(1)}{\calO_{\PP(\mathscr{T}_X)}},\\
	\mathscr{T}_{E_1/X}|_{W}& \cong \frac{\mathscr{T}_X^{\oplus 2}\otimes
		\calO_{E_1}(1)|_W}{\calO_W} = \frac{\C^2\otimes \calO_{\PP^1}(1) \otimes
		\mathscr{T}_X\otimes \calO_{\PP(\mathscr{T}_X)}(1) } {\calO_W}.
	\end{align*}
	Here all the sheaves are viewed as sheaves on $W$ via pulling back the
	corresponding ones. From this, we see that the normal bundle of $W$ in $E_1$ is
	identified as
	\begin{equation}\label{eq normal W E1}
	\mathscr{N}_{W/E_1} \cong p_1^* \mathscr{T}_{\PP^1} \otimes
	p_2^*\mathscr{T}_{\PP(\mathscr{T}_X)/X}.
	\end{equation}
	Since $E'_1$ is the blow-up of $E_1$ along $W_{ij}$, where $W_{ij}$ are divisors
	on $W$, we conclude by Lemma \ref{lem normal bundle blow up} that
	\begin{equation}\label{eq1}
	\mathscr{N}_{\tilde{W}/E'_1} = \mathscr{N}_{W/E_1}\otimes
	\calO_W(-W_{12}-W_{23}-W_{13}) \cong p_1^*\calO_{\PP^1}(-1)\otimes
	p_2^*\mathscr{T}_{\PP(\mathscr{T}_X)/X}.
	\end{equation}
	Since $E'_1$ meets $\tilde{\Delta}_{ij}$ transversally, again by Lemma \ref{lem
		normal bundle blow up}, we see that $\mathscr{N}_{E'_1/X_2}$ is isomorphic to
	the pull-back of $\mathscr{N}_{E_1/X_1}$. Hence we get
	\begin{equation} \label{eq2}
	\mathscr{N}_{E'_1/X_2}|_{\tilde{W}} \cong  \mathscr{N}_{E_1/X_1}|_{W}=
	\calO_{E_1}(-1)|_W \cong  p_1^*\calO_{\PP^1}(-1)\otimes
	p_2^*\calO_{\PP(\mathscr{T}_X)}(-1). 
	\end{equation}
	The normal bundle of $\tilde{W}$ in $X_2$ fits into the following short exact
	sequence
	\[
	\xymatrix{
		0\ar[r] & \mathscr{N}_{\tilde{W}/E'_1}\ar[r] &\mathscr{N}_{\tilde{W}/X_2}\ar[r]
		&\mathscr{N}_{E'_1/X_2}|_{\tilde{W}}\ar[r] &0.
	}
	\]
	By the isomorphisms \eqref{eq1} and \eqref{eq2}, the short exact sequence can be
	rewritten as
	\begin{equation}\label{eq normal ses}
	\xymatrix{
		0\ar[r] &p_1^*\calO_{\PP^1}(-1)\otimes p_2^*\mathscr{T}_{\PP(\mathscr{T}_X)/X}
		\ar[r] 
		& \mathscr{N}_{\tilde{W}/X_2} \ar[r] &p_1^*\calO_{\PP^1}(-1)\otimes
		p_2^*\calO_{\PP(\mathscr{T}_X)}(-1) \ar[r] & 0.
	}
	\end{equation}
	This defines an element in
	\begin{align*}
	& \mathrm{Ext}^1_{\calO_W} (p_1^*\calO_{\PP^1}(-1)\otimes
	p_2^*\calO_{\PP(\mathscr{T}_X)}(-1), p_1^*\calO_{\PP^1}(-1)\otimes
	p_2^*\mathscr{T}_{\PP(\mathscr{T}_X)/X}) \\
	& \quad =
	\mathrm{Ext}_{\calO_{\PP(\mathscr{T}_X)}}(\calO_{\PP(\mathscr{T}_X)}(-1),
	\mathscr{T}_{\PP(\mathscr{T}_X)/X}).
	\end{align*}
	Hence there is a locally free sheave $\mathscr{E}$ on $\PP(\mathscr{T}_X)$ such
	that
	\[
	\mathscr{N}_{\tilde{W}/X_2} \cong p_1^*\calO_{\PP^1}(-1)\otimes
	p_2^*\mathscr{E}.
	\]
	Furthermore, the short exact sequence \eqref{eq normal ses} shows that
	$\mathscr{E}$ naturally fits into the following short exact sequence 
	\begin{equation}\label{eq E ses}
	\xymatrix{
		0\ar[r] &\mathscr{T}_{\PP(\mathscr{T}_X)/X}\ar[r] &\mathscr{E} \ar[r]
		&\calO_{\PP(\mathscr{T}_X)}(-1)\ar[r] &0.
	}
	\end{equation}
	
	Let $E_W\subset X_3$ be the exceptional divisor of the blow-up $X_3\rightarrow
	X_2$. Then we have
	\begin{equation}\label{eq isom E_W}
	E_W = \PP(\mathscr{N}_{\tilde{W}/X_2}) =\PP^1 \times \PP(\mathscr{E}).
	\end{equation}
	Let $E''_1\subset X_3$ be the strict transform of $E'_1\subset X_2$. Note that
	$E''_1$ is the blow-up of $E'_1$ along $\tilde{W}$ with exceptional divisor
	$E''_W=\PP(\mathscr{N}_{\tilde{W}/E'_1})=\PP^1\times
	\PP(\mathscr{T}_{\PP(\mathscr{T}_X)/X})$. We also have $E''_W=E''_1\cap E_W$ and
	the inclusion $E''_W\subset E_W$ is induced by the subsheaf
	$\mathscr{T}_{\PP(\mathscr{T}_X)/X}\subset \mathscr{E}$ in \eqref{eq E ses}. The
	morphism $p: X_3\rightarrow X^{[3]}$ contracts the two divisors $E_W$ and
	$E''_1$ in the following way. The morphism $p:X_3\rightarrow X^{[3]}$ contracts
	$E''_1$ onto $B_3\cong \mathrm{G}(2,\mathscr{T}_X)$ and it contracts
	$E_W\backslash E''_W$ onto $B_2\backslash B_3$. Note that 
	\[
	E_W\backslash E''_W \cong \PP^1\times \PP(\mathscr{E})\backslash
	\PP(\mathscr{T}_{\PP(\mathscr{T}_X)/X})
	\]
	and 
	\[
	B_2\backslash B_3 \cong \PP(\mathscr{E})\backslash
	\PP(\mathscr{T}_{\PP(\mathscr{T}_X)/X}).
	\]
	The morphism $p$ simply contracts the $\PP^1$-factor of $E_W\backslash E''_W$.
	\medskip
	
	We summarize the above discussion in the following proposition.
	
	\begin{prop}\label{prop summary}
		The morphism $p: X_3\rightarrow X^{[3]}$ contracts two divisors $E''_1$ and
		$E_W$ respectively to $B_3$ and $B_2$. The ramification divisors outside
		$E''_1\cup E_W$ are $\tilde{E}_{ij}$ (strict transform of $E_{ij}$), $1\leq
		i<j\leq 3$. Furthermore, we have
		\begin{align*}
		E_W\cap \tilde{E}_{ij} & = \{t_{ij}\}\times \PP(\mathscr{E}) \hookrightarrow
		E_W=\PP^1\times \PP(\mathscr{E}),\quad t_{12}=[0:1],\ t_{23}=[1:0],\
		t_{13}=[1:-1]\,;\\
		E''_1\cap E_W & = \PP^1\times \PP(\mathscr{T}_{\PP(\mathscr{T}_X)/X})\,;\\
		E''_1\cap \tilde{E}_{ij} & = \mathrm{Bl}_{\Delta}(\PP(\mathscr{T}_X)\times_X
		\PP(\mathscr{T}_X)),
		\end{align*}
		where $\Delta=\Delta_{\PP(\mathscr{T}_X)/X}$ is the diagonal of
		$\PP(\mathscr{T}_X)$ relative to $X$. 
	\end{prop}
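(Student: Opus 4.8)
The plan is to read each assertion off the explicit tower of blow-ups $X_3 \to X_2 \to X_1 \to X_0 = X^3$ built above, since the proposition simply collects geometric facts that were essentially established along the way. First I would record the contraction behaviour. The paragraph preceding the statement shows, using the stratification of Subsection \ref{sec stratification}, that $p$ maps $E''_1$ onto $B_3 \cong \mathrm{G}(2,\mathscr{T}_X)$ and carries $E_W \setminus E''_W$ onto $B_2 \setminus B_3$ by collapsing the $\PP^1$-factor of $E_W = \PP^1 \times \PP(\mathscr{E})$; together with $B_2 \setminus B_3 \cong \PP(\mathscr{E}) \setminus \PP(\mathscr{T}_{\PP(\mathscr{T}_X)/X})$ this is exactly the assertion that the two divisors $E''_1$ and $E_W$ are contracted onto $B_3$ and $B_2$.

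For the ramification divisors I would use Proposition \ref{prop desing}: $p$ factors as $X_3 \to X_3/\mathfrak{S}_3 \to X^{[3]}$, where the second arrow is the birational contraction of (the images of) $E''_1$ and $E_W$ and the first is the degree-$6$ quotient. Away from $E''_1 \cup E_W$ the second arrow is an isomorphism onto its image, so the ramification of $p$ there coincides with that of the quotient map, which occurs along the fixed loci of non-trivial elements of $\mathfrak{S}_3$. A transposition $(ij)$ fixes precisely the divisor lying over the big diagonal $\Delta_{ij}$, whose proper transform in $X_3$ is $\tilde E_{ij}$; hence the ramification divisors outside $E''_1 \cup E_W$ are exactly the $\tilde E_{ij}$.

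The content of the three intersection formulas is then extracted from the normal-bundle computations already carried out. The equality $E''_1 \cap E_W = \PP^1 \times \PP(\mathscr{T}_{\PP(\mathscr{T}_X)/X})$ is the description of the exceptional divisor $E''_W$ of the blow-up $E''_1 \to E'_1$ recorded just after \eqref{eq isom E_W}, since $E''_W = E''_1 \cap E_W$. For $E_W \cap \tilde E_{ij}$ I would note that under $\tilde W \cong W = \PP^1 \times \PP(\mathscr{T}_X)$ the intersection $\tilde W \cap E_{ij}$ is $W_{ij} = \{t_{ij}\} \times \PP(\mathscr{T}_X)$ with $t_{12}=[0:1]$, $t_{23}=[1:0]$, $t_{13}=[1:-1]$; as $E_W = \PP(\mathscr{N}_{\tilde W/X_2})$ with $\mathscr{N}_{\tilde W/X_2} \cong p_1^*\calO_{\PP^1}(-1) \otimes p_2^*\mathscr{E}$ and $\mathscr{E}$ pulled back from $\PP(\mathscr{T}_X)$, the restriction of $E_W$ over $W_{ij}$ is $\{t_{ij}\} \times \PP(\mathscr{E})$, which is the stated intersection.

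The one genuinely new point, and the step I expect to be the main obstacle, is the last formula $E''_1 \cap \tilde E_{ij} = \mathrm{Bl}_\Delta(\PP(\mathscr{T}_X) \times_X \PP(\mathscr{T}_X))$. Since $E''_1 \cap \tilde E_{ij}$ is the proper transform of $E'_{1,ij} \cong \PP(\mathscr{T}_X) \times_X \PP(\mathscr{T}_X)$ under the blow-up of $X_2$ along $\tilde W$, it suffices to identify the center $\tilde W \cap E'_{1,ij}$ inside $E'_{1,ij}$. Fixing a point $x \in X$ and writing $V = \mathscr{T}_{X,x}$, one has $E'_{1,ij} = \PP(\mathscr{N}_{W_{ij}/E_1}) = \PP(\mathrm{Hom}(\calO(-1),V)) \cong \PP(V) \times \PP(V)$, with base $W_{ij} = \PP(V)$, while $\tilde W \cap E'_{1,ij} = \PP(\mathscr{N}_{W_{ij}/W})$ is the section determined by the single normal direction of $W$ transverse to $W_{ij}$, namely the $\PP^1$-direction. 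Differentiating the embedding $([s:t],[v]) \mapsto [sv:tv]$ of $W$ into $E_1 = \PP(V \oplus V)$ along that direction shows the corresponding homomorphism $\langle v \rangle \to V$ is $v \mapsto v$, so the section is $[v] \mapsto ([v],[v])$, the relative diagonal $\Delta_{\PP(\mathscr{T}_X)/X}$ (the three cases $t_{ij}$ agreeing by $\mathfrak{S}_3$-symmetry). Blowing up this diagonal then yields $\mathrm{Bl}_\Delta(\PP(\mathscr{T}_X) \times_X \PP(\mathscr{T}_X))$, which is the remaining formula.
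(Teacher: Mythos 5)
Your proposal is correct and follows essentially the same route as the paper: all statements are read off the explicit blow-up tower, with the only substantive points being the identification of $\tilde{W}_{ij}=\tilde W\cap E_{ij}$ with the relative diagonal in $E'_{1,ij}\cong \PP(\mathscr{T}_X)\times_X\PP(\mathscr{T}_X)$ and the normal-bundle computation giving $E_W\cap\tilde E_{ij}=\{t_{ij}\}\times\PP(\mathscr{E})$ (where the paper identifies $\mathscr{N}_{\tilde W_{ij}/E_{ij}}$ via a short exact sequence equivalent to \eqref{eq E ses}, while you equivalently restrict $\PP(\mathscr{N}_{\tilde W/X_2})$ over $\tilde W_{ij}$, using the transversality of $\tilde W$ and $E_{ij}$). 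You in fact supply two details the paper leaves implicit — the ramification claim via the $\mathfrak{S}_3$-quotient factorization, and the explicit differentiation showing $\tilde W_{ij}$ is the relative diagonal — so the argument is complete.
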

	
	\begin{proof}
		We only need to deal with the intersection with $\tilde{E}_{ij}$. By
		construction, we have
		\[
		\tilde{W}_{ij}:= \tilde{W}\cap E_{ij} = \{t_{ij}\}\times
		\PP(\mathscr{T}_X)\hookrightarrow \tilde{W}=\PP^1\times \PP(\mathscr{T}_X).
		\]
		The isomorphism $E_{ij}\cong \mathrm{Bl}_{\Gamma_\pi}(\PP(\mathscr{T}_X)\times
		X)$ gives an exceptional divisor $E'_{1,ij}\cong\PP(\mathscr{T}_X)\times_X
		\PP(\mathscr{T}_X)$. Then $\tilde{W}_{ij}$ is simply the relative diagonal
		$\Delta$ in the exceptional divisor $E'_{1,ij}$. The successive closed
		immersions $\tilde{W}\subset E'_{1,ij}\subset E_{ij}$ gives a short exact
		sequence
		\[
		\xymatrix{
			0\ar[r] & \mathscr{N}_{\tilde{W}_{ij}/E'_{1,ij}}\ar[rr]
			&&\mathscr{N}_{\tilde{W}_{ij}/E_{ij}}\ar[rr]
			&&\mathscr{N}_{E'_{1,ij}/E_{ij}}|_{\tilde{W}_{ij}}\ar[r] &0
		}
		\]
		which is equivalent to \eqref{eq E ses}. Hence we see that
		\[
		E_W\cap \tilde{E}_{ij} = \{t_{ij}\}\times \PP(\mathscr{E}).
		\]
		The last identity of the proposition follows from 
		\[
		E''_1\cap \tilde{E}_{ij} = \mathrm{Bl}_{\tilde{W}_{ij}}(E'_{1,ij}) =
		\mathrm{Bl}_{\Delta}(\PP(\mathscr{T}_X)\times_X \PP(\mathscr{T}_X)).
		\]
	\end{proof}

	\section{Multiplicative Chow--K\"unneth decomposition for $X^{[3]}$}
	
	\subsection{Self-dual $\mathfrak{S}_3$-invariant multiplicative
		Chow--K\"unneth decomposition on $X_3$}
	Let us consider a smooth projective variety $X$ that is equipped with
	a multiplicative Chow--K\"unneth decomposition
	\[
	\Delta_X = \pi^0_X +\pi^1_X +\cdots + \pi^{2d}_X \quad \text{in} \
	\CH_d(X\times X)
	\] that is self-dual, meaning that $\pi^{2d-i}_X$ is the transpose of
	$\pi_X^i$ for all $i$. Here $d=\dim X$.  Then the product Chow--K\"unneth
	decomposition, $$\pi_{X^3}^i := \sum_{i_1+i_2+i_3 = i} \pi_X^{i_1}
	\otimes \pi_X^{i_2} \otimes \pi_X^{i_3}, \quad 0\leq i\leq 6d$$ on
	$X^3$ is clearly self-dual (\emph{cf.} Proposition \ref{prop
		self-dualproduct})\,; it is also multiplicative by
	Proposition \ref{prop multstable}. The symmetric group $\mathfrak{S}_3$ acts on
	$X^3$ and the product Chow--K\"unneth decomposition $\{\pi^i_{X^3}\}$
	is clearly $\mathfrak{S}_3$-invariant. 
	
	We now take up the notations of Paragraph \ref{sec desing} and we
	assume that $X$ is a smooth projective variety of dimension $d$
	equipped with a self-dual multiplicative Chow--K\"unneth
	decomposition $\{\pi_X^i\}$, with the additional property that the Chern classes
	$c_i(X)$ sit in $\CH^i(X)_0$ for all $i$. The goal of this paragraph is to show
	that the variety $X_3$ obtained   in Paragraph \ref{sec desing} by  resolving
	the map $X^3\dashrightarrow X^{[3]}$  is naturally equipped with a self-dual
	multiplicative Chow--K\"unneth decomposition that is
	$\mathfrak{S}_3$-invariant.\medskip
	
	We define 
	\begin{equation*} 
	\mathcal{S}_0 := \{\Delta_{123}, \Delta_{12},
	\Delta_{13}, \Delta_{23}, X_0 \}
	\end{equation*}
	and we equip $\Delta_{123} \cong X$ with the Chow--K\"unneth decomposition
	$\{\pi^i_X\}$, $\Delta_{ij} \cong X^2$ with the product Chow--K\"unneth
	decomposition, and  $X_0 = X^3$ with the product Chow--K\"unneth decomposition.
	These Chow--K\"unneth decompositions are all self-dual and multiplicative by
	Proposition \ref{prop multstable}. Moreover, the Chow--K\"unneth decompositions
	for $\Delta_{123}$ and $X_0$ are $\mathfrak{S}_3$-invariant, while the
	Chow--K\"unneth decompositions $\{\pi^l_{\Delta_{ij}}\}$ for the big diagonals
	$\Delta_{ij}$ satisfy $g\cdot \pi^l_{\Delta_{ij}} = \pi^l_{\Delta_{g(i)g(j)}}$
	for all $g \in \mathfrak{S}_3$ (here, $g(i)$ is the action of $\mathfrak{S}_3$
	on the set $\{1,2,3\}$).

	\begin{lem}\label{lem S_0 admissible}
		The set $\mathcal{S}_0$ is  complete and admissible in
		the sense of Definition \ref{def complete admissible}.
	\end{lem}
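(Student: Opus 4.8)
The plan is to verify the two clauses of Definition \ref{def complete admissible} in turn, the completeness being essentially a combinatorial check on intersections and the admissibility reducing, after some Chern-class bookkeeping, to a single structural fact about multiplicative decompositions.

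\textbf{Completeness.} Since $X_0\in\mathcal{S}_0$ by construction, condition (i) holds, and for (ii) I would run through the pairwise scheme-theoretic intersections. Any intersection involving $X_0$ returns the other member; the three big diagonals satisfy $\Delta_{ij}\cap\Delta_{kl}=\Delta_{123}$ for $\{i,j\}\neq\{k,l\}$; and $\Delta_{ij}\cap\Delta_{123}=\Delta_{123}$ because $\Delta_{123}\subset\Delta_{ij}$. The only delicate point is that these intersections be scheme-theoretically \emph{reduced}, so that they genuinely coincide with members of $\mathcal{S}_0$ rather than a non-reduced thickening. This follows from a tangent-space count: at a point of $\Delta_{123}$ the tangent spaces to $\Delta_{12}$ and $\Delta_{13}$ are $\{(a,a,b)\}$ and $\{(a,b,a)\}$ inside $\mathscr{T}_X^{\oplus 3}$, which span the whole space and meet exactly in $\{(a,a,a)\}=\mathscr{T}_{\Delta_{123}}$; hence these intersections are transverse, and $\mathcal{S}_0$ is complete.

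\textbf{Admissibility, clauses (i) and (ii)(a).} Each member is smooth, being a product of copies of $X$, and each carries a self-dual multiplicative Chow--K\"unneth decomposition: $\Delta_{123}\cong X$ with $\{\pi_X^i\}$, while $\Delta_{ij}\cong X^2$ and $X_0=X^3$ get the product decompositions, which are self-dual by Proposition \ref{prop self-dualproduct} and multiplicative by Proposition \ref{prop multstable}. For (ii)(a) I would identify the three relevant normal bundles: $\mathscr{N}_{\Delta_{ij}/X_0}\cong p_1^*\mathscr{T}_X$, while $\mathscr{N}_{\Delta_{123}/X_0}\cong\mathscr{T}_X^{\oplus 2}$ by diagram \eqref{eq comm}, and $\mathscr{N}_{\Delta_{123}/\Delta_{ij}}\cong\mathscr{T}_X$. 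In each case the total Chern class is the pullback along a projection (which preserves the grading by Corollary \ref{cor projections grade 0} and \eqref{eq product pull back}) of a product of copies of $c(\mathscr{T}_X)$, and so lies in the grade-$0$ subring because $c_p(X)\in\CH^p(X)_0$ by hypothesis.

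\textbf{Admissibility, clause (ii)(b), the crux.} The remaining task, that each inclusion be of pure grade $0$, is where the real content lies, and it rests on one observation: the graph of the diagonal embedding $\delta_X:X\hookrightarrow X\times X$, viewed as a correspondence in $\CH^{2d}(X\times(X\times X))$, is precisely the small diagonal $\Delta_{123}$. By the criterion \eqref{eq criterion multselfdual}, multiplicativity together with self-duality is exactly the assertion that $\Delta_{123}\in\CH^{2d}(X^3)_0$, so $\delta_X$ is of pure grade $0$. (By contrast $\mathrm{id}_X$, with graph $\Delta_X$, is of pure grade $0$ merely by self-duality.) From this I would dispatch the three embeddings: $\Delta_{123}\hookrightarrow\Delta_{ij}$ is a diagonal $\delta_X$ and hence already done; the inclusion $\Delta_{12}\hookrightarrow X_0$ is $\phi_{12}=\delta_X\times\mathrm{id}_X$, of pure grade $0$ by Proposition \ref{prop action of correspondence}(iii) since both factors are; and $\Delta_{123}\hookrightarrow X_0$ factors as $\phi_{12}\circ\delta_X$, of pure grade $0$ by Proposition \ref{prop action of correspondence}(ii). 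The $\mathfrak{S}_3$-invariance recorded in the set-up ($g\cdot\pi^l_{\Delta_{ij}}=\pi^l_{\Delta_{g(i)g(j)}}$) lets me treat all three $\Delta_{ij}$ uniformly, since each $g\in\mathfrak{S}_3$ acts on $X^3$ as a pure-grade-$0$ correspondence by Lemma \ref{lem corr grade 0}. This establishes admissibility; I expect clause (ii)(b), and specifically the grade-$0$ property of $\delta_X$ coming from multiplicativity, to be the only genuinely substantive point.
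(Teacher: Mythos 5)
Your proof is correct and follows essentially the same route as the paper's (much terser) argument: completeness is a direct check on transverse intersections, the normal bundles are pullbacks of $\mathscr{T}_X$ so clause (ii)(a) follows from $c_p(X)\in\CH^p(X)_0$, and clause (ii)(b) reduces to the observation that the graph of the diagonal $\delta_X\colon X\to X\times X$ is the small diagonal $\Delta_{123}$, which lies in $\CH^{2d}(X^3)_0$ precisely by multiplicativity plus self-duality via \eqref{eq criterion multselfdual}. Your write-up makes explicit (via Proposition \ref{prop action of correspondence} and the grade-$0$ property of the $\mathfrak{S}_3$-action) what the paper compresses into the phrase that the inclusions are ``obtained from the graph of the identity morphism.''
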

	
	\begin{proof}
		That the set $\mathcal{S}_0$ is complete is obvious.
		It is also admissible. Indeed, 
		(a) the
		Chern classes of $X$ belong to $\CH^*(X)_0$ by assumption and it is then
		apparent
		that the Chern classes of the various normal bundles among the pairs
		of elements $(Y_{j_1},Y_{j_2})$ of $\mathcal{S}_0$ such that $ Y_{j_1}
		\subset Y_{j_2}$ have their Chern classes sitting in
		$\CH^*(Y_{j_1})_0$. Also the multiplicative Chow--K\"unneth
		decomposition $\{\pi_X^i\}$ is assumed to be self-dual, it follows  that all
		possible diagonal inclusions are of pure grade 0 since they are obtained from
		the graph of the identity morphism. Hence we have (b) the
		various inclusions $ Y_{j_1} \subset Y_{j_2}$ are of pure grade 0.
	\end{proof}

	Taking $Y_0:=\Delta_{123}$ and denoting $X_1=\mathrm{Bl}_{Y_0}(X_0)$
	the blow-up of $X_0$ along $Y_0$, one sees that
	\[
	\mathrm{Bl}_{Y_0}(\mathcal{S}_0) = \set{\tilde{\Delta}_{12},
		\tilde{\Delta}_{13}, \tilde{\Delta}_{23}, X_1}.
	\]
	Note that the strict transforms of the $\Delta_{ij}$\,:
	$\tilde{\Delta}_{12}$, $\tilde{\Delta}_{23}$ and $\tilde{\Delta}_{13}$
	are pairwise disjoint as subvarieties of $X_1$. These are equipped with a
	self-dual multiplicative Chow--K\"unneth decomposition induced by that of
	$\Delta_{ij}$ and $\Delta_{123}$ via Proposition \ref{prop multCK blow-up}. By
	Proposition
	\ref{prop admissible blow-up}, the set
	$\mathrm{Bl}_{Y_0}(\mathcal{S}_0)$ is complete and admissible. The self-dual
	multiplicative Chow--K\"unneth decompositions of the $\tilde{\Delta}_{ij}$ fit
	together to give a self-dual multiplicative Chow--K\"unneth decomposition of
	$\tilde{\Delta}_{12} \sqcup\tilde{\Delta}_{13}
	\sqcup \tilde{\Delta}_{23}$ that is $\mathfrak{S}_3$-invariant. 

	Let $E_1\subset X_1$ be the exceptional divisor of the blow-up
	$\rho_1:X_1\rightarrow X_0$. In the discussion following \eqref{eq
		comm}, we defined a smooth closed subvariety $W\subset E_1 \subset X_1$. Let
	$W_{ij}\subset
	\PP(\mathscr{N}_{Y_0/X_0}) =E_1$ be  the closed subvarieties
	obtained in \eqref{eq Wij} as the geometric projectivizations of the following
	sub-bundles $\mathscr{W}_{ij}$ of $\mathscr{N}_{Y_0/X_0}$
	\begin{align*}
	\mathscr{W}_{12} & \cong \mathscr{T}_X \hookrightarrow \mathscr{T}_X
	\oplus
	\mathscr{T}_X, \qquad v\mapsto (0,v)\,;\\
	\mathscr{W}_{23} & \cong \mathscr{T}_X\hookrightarrow
	\mathscr{T}_X\oplus
	\mathscr{T}_X, \qquad v\mapsto (v,0)\,; \\
	\mathscr{W}_{13} & \cong \mathscr{T}_X\hookrightarrow
	\mathscr{T}_X\oplus \mathscr{T}_X, \qquad v\mapsto (v,-v).
	\end{align*}
	The isomorphism $\mathscr{W}_{ij}\cong \mathscr{T}_X$ gives that the Chern
	classes
	$c_p(\mathscr{W}_{ij}) = c_p(X)$ sit in $\CH^p(X)_0$. Hence the subvarieties
	$W_{ij}$ are
	naturally endowed with a self-dual multiplicative Chow--K\"unneth decomposition
	thanks to Proposition \ref{prop projbundleselfdualmult}. Recall that under the
	isomorphism \eqref{eq isom
		W}, we have
	\begin{equation}\label{eq isom Wij}
	W_{12} \cong [0:1]\times \PP(\mathscr{T}_X),\quad W_{23} \cong[1:0]\times
	\PP(\mathscr{T}_X),\quad  W_{13} \cong[1:-1]\times \PP(\mathscr{T}_X),
	\end{equation}
	and note that scheme-theoretically
	\[
	W_{ij} = W\cap \tilde{\Delta}_{ij}.
	\]
	We also observe that the variety $W$ is stable under the action of
	$\mathfrak{S}_3$. Let us describe explicitly the action of $\mathfrak{S}_3$ on
	$W$. Note that the isomorphism \eqref{eq isom E_1} factors as
	\[
	E_1=\PP\left(\frac{\mathscr{T}_X \oplus \mathscr{T}_X \oplus
		\mathscr{T}_X}{\mathscr{T}_X} \right) \cong \PP(\mathscr{T}_X\oplus
	\mathscr{T}_X),
	\]
	where the second isomorphism is induced by the homomorphism $\theta$ in
	\eqref{eq comm}. The action of $\mathfrak{S}_3$ on $E_1$ is given by permuting
	the three $\mathscr{T}_X$-factors of the numerator in
	$\PP({\mathscr{T}_X^{\oplus 3}} / {\mathscr{T}_X})$. Under the identification
	\eqref{eq isom W}, the natural inclusion $W\subset E_1$ is given by
	\[
	\PP^1\times \PP(\mathscr{T}_X)\hookrightarrow \PP(\mathscr{T}_X\oplus
	\mathscr{T}_X), \qquad ([s:t],[v]) \mapsto [sv :tv]. 
	\]
	Then the $\mathfrak{S}_3$-action is easy to understand. For example let
	$g=(12)\in\mathfrak{S}_3$ be the transposition that permutes 1 and 2. Then we
	have
	\[
	\xymatrix{
		(a,b,c)\ar[r]^\theta\ar[d]^g &(a-b, b-c)\ar[d]^g\\
		(b,a,c)\ar[r]^\theta &(b-a,a-c)
	}
	\] 
	Hence the action of $g$ on $\PP(\mathscr{T}_X\oplus \mathscr{T}_X)$ is given by
	$[v_1 : v_2]\mapsto [-v_1 : v_1+v_2]$. The following diagram is then commutative
	\[
	\xymatrix{
		\PP^1\times \PP(\mathscr{T}_X) \ar@{^(->}[r]\ar[d]^g &\PP(\mathscr{T}_X\oplus
		\mathscr{T}_X) \ar[d]^g
		&([s:t],[v])\ar@{|->}[r]\ar@{|->}[d]^g & [sv:tv]\ar@{|->}[d]^g\\
		\PP^1\times \PP(\mathscr{T}_X) \ar@{^(->}[r] &\PP(\mathscr{T}_X\oplus
		\mathscr{T}_X) &([-s:s+t],[v]) \ar@{|->}[r] &[-sv : (s+t)v].
	}
	\] 
	Hence the action of the element $g = (12)$ on $\PP^1\times \PP(\mathscr{T}_X)$
	is induced by the action on $\PP^1$ given by $[s:t]\mapsto [-s:s+t]$. Repeating
	the above argument for each element of $\mathfrak{S}_3$, we conclude that under
	the isomorphism \eqref{eq isom W}, $W$ is isomorphic to $\PP^1 \times
	\PP(\mathscr{T}_X)$ and this isomorphism is made  $\mathfrak{S}_3$-equivariant
	by letting $\mathfrak{S}_3$ act trivially on 
	$\PP(\mathscr{T}_X)$ and act on $\PP^1$ as the subgroup of $\mathrm{Aut}(\PP^1)$
	that permutes $\{[1:0],[1:-1],[0:1]\}$. 
	Thus the subvariety $W_{12}\sqcup W_{13}\sqcup W_{23}$ is stable under the
	action of $\mathfrak{S}_3$. Let us endow $\PP^1$ with the
	$\mathfrak{S}_3$-invariant (for the action described above) self-dual 
	multiplicative Chow--K\"unneth decomposition $\{\pi_{\PP^1}^0:= \{[0:1]\} \times
	X, \pi_{\PP^1}^2:=X \times \{[0:1]\}\}$ and $\PP(\mathscr{T}_X)$ with the
	$\mathfrak{S}_3$-invariant (for the trivial action) multiplicative self-dual
	Chow--K\"unneth decomposition given by Proposition \ref{prop
		projbundleselfdualmult}. Then clearly, the obvious Chow--K\"unneth decomposition
	on  $W$ and $W_{12}\sqcup W_{13}\sqcup W_{23}$ are $\mathfrak{S}_3$-invariant,
	self-dual, and multiplicative.
	
	We define the set $\mathcal{S}_1$ of closed subvarieties of $X_1$ to be
	\begin{equation*} 
	\mathcal{S}_1:=\set{X_1,E_1, \tilde{\Delta}_{12} \sqcup\tilde{\Delta}_{13}
		\sqcup \tilde{\Delta}_{23}, W, W_{12}\sqcup W_{13} \sqcup W_{23}}.
	\end{equation*}
	
	\begin{lem}\label{lem S_1 admissible}
		The set $\mathcal{S}_1$ is  complete in
		the sense of Definition \ref{def complete admissible}. Moreover, all elements of
		$\mathcal{S}_1$ are stable under the action of $\mathfrak{S}_3$ and are endowed
		with a $\mathfrak{S}_3$-invariant self-dual Chow--K\"unneth decomposition that
		make $\mathcal{S}_1$ admissible.
	\end{lem}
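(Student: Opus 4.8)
The plan is to establish the three assertions of the lemma in turn, reducing almost everything to the toolbox of the previous sections together with the already-established admissibility of $\mathrm{Bl}_{Y_0}(\mathcal{S}_0)$. I would begin with completeness, which is a finite inspection of pairwise scheme-theoretic intersections. Since $X_1\in\mathcal{S}_1$ is the ambient space, it suffices to intersect the four proper members. Using $W\subset E_1$, $W_{ij}\subset W$, the identities $W_{ij}=W\cap\tilde\Delta_{ij}$ and $W_{ij}=E_1\cap\tilde\Delta_{ij}$ of \eqref{eq Wij}, and the pairwise disjointness of the $\tilde\Delta_{ij}$, one reads off that $E_1\cap\tilde\Delta_{ij}=W_{ij}$, $E_1\cap W=W$, $\tilde\Delta_{ij}\cap W=W_{ij}$, and that every intersection among $E_1$, $\sqcup\tilde\Delta_{ij}$, $W$, $\sqcup W_{ij}$ is again a member of $\mathcal{S}_1$ (the disjoint unions collapsing by disjointness). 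Transversality of these intersections, visible from the local descriptions of Paragraph \ref{sec desing}, ensures the scheme structures are reduced, so $\mathcal{S}_1$ is complete in the sense of Definition \ref{def complete admissible}.

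Next I would assemble the $\mathfrak{S}_3$-invariant self-dual multiplicative Chow--K\"unneth decompositions and the stability claim. Here $X_1$ and $\sqcup\tilde\Delta_{ij}$ inherit theirs from the admissibility of $\mathrm{Bl}_{Y_0}(\mathcal{S}_0)$ (Proposition \ref{prop admissible blow-up}); the exceptional divisor $E_1\cong\PP(\mathscr{T}_X^{\oplus2})$ is the projectivization of the $\mathfrak{S}_3$-equivariant bundle $\mathscr{N}_{Y_0/X_0}$, whose Chern classes lie in the graded-$0$ part, so Propositions \ref{prop self-dual projbundle} and \ref{prop projbundleselfdualmult} supply the required decomposition with the explicit grading \eqref{eq projbundleformula}; and $W$, $\sqcup W_{ij}$ carry the product, resp.\ projective-bundle, decompositions exhibited just before the statement. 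Stability under $\mathfrak{S}_3$ follows from functoriality of blow-up (for $X_1$ and $E_1$), from the fact that $\mathfrak{S}_3$ permutes the $\tilde\Delta_{ij}$ and hence the $W_{ij}$, and from the explicit action on $W\cong\PP^1\times\PP(\mathscr{T}_X)$ computed above.

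The substance is admissibility: for each inclusion $Y_{j_1}\subset Y_{j_2}$ of members I must check (a) $c(\mathscr{N}_{Y_{j_1}/Y_{j_2}})\in\CH^*(Y_{j_1})_0$ and (b) that the inclusion is of pure grade $0$. The inclusions $\tilde\Delta_{ij}\subset X_1$ are already admissible. The inclusion $E_1\subset X_1$ is the exceptional-divisor inclusion of $X_1=\mathrm{Bl}_{Y_0}(X_0)$, so Remark \ref{rmk blow-up diagram degree 0} gives (b) while $\mathscr{N}_{E_1/X_1}=\calO_{E_1}(-1)$ gives (a); the same remark, applied to $\tilde\Delta_{ij}=\mathrm{Bl}_{\Delta_X}(X\times X)$, handles $W_{ij}\subset\tilde\Delta_{ij}$, which is exactly its exceptional-divisor inclusion $\cong\PP(\mathscr{T}_X)$. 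For $W_{ij}\subset E_1$ I apply Lemma \ref{lem proj subbundle} to the subbundle $\mathscr{W}_{ij}\cong\mathscr{T}_X\subset\mathscr{T}_X^{\oplus2}$ over $X$, whose Chern characters are grade $0$. For $W_{ij}\subset W$ I note, via \eqref{eq isom W}, that it is $\{t_{ij}\}\times\PP(\mathscr{T}_X)\hookrightarrow\PP^1\times\PP(\mathscr{T}_X)$, i.e.\ the product of the inclusion of the rational point $\{t_{ij}\}\hookrightarrow\PP^1$ (of pure grade $0$, since $\CH^1(\PP^1)=\CH^1(\PP^1)_0$) with $\mathrm{id}_{\PP(\mathscr{T}_X)}$, so Proposition \ref{prop action of correspondence}(iii) gives (b), the normal bundle being trivial. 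Finally $W\subset X_1$ and $W_{ij}\subset X_1$ follow by transitivity (Remark \ref{rmk admissible composition}) once $W\subset E_1$ is settled.

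The main obstacle is precisely $W\subset E_1$: by \eqref{eq isom W} and \eqref{eq isom E_1}, $W\cong\PP^1\times\PP(\mathscr{T}_X)$ is a relative Segre variety in $E_1=\PP(\mathscr{T}_X^{\oplus2})\to X$, hence \emph{not} a sub-projective-bundle over $X$, so Lemma \ref{lem proj subbundle} does not apply verbatim. I would instead adapt its proof, the key input being that the cycle class $[W]$ lies in $\CH^*(E_1)_0$. To see this, realize $W$ as the locus in $E_1$ where the tautological map $\calO_{E_1}(-1)^{\oplus2}\to\pi^*\mathscr{T}_X$ (induced by $\calO_{E_1}(-1)\hookrightarrow\C^2\otimes\pi^*\mathscr{T}_X$) drops to rank $\le 1$; by the Thom--Porteous formula (Fulton) $[W]$ is a universal polynomial in the Chern classes of $\pi^*\mathscr{T}_X-\calO_{E_1}(-1)^{\oplus2}$, all of which are grade $0$ (the former because $\pi$ preserves grade and $c(X)\in\CH^*(X)_0$, the latter being powers of the grade-$0$ class $\xi$), so $[W]\in\CH^*(E_1)_0$ by multiplicativity. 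Granting this, $\iota^*$ respects the grading since $\iota^*\xi_{E_1}$ is the grade-$0$ class of \eqref{eq O1 on W}, and $\iota_*\iota^*(\beta)=[W]\cdot\beta$ stays in grade $0$; running this argument also for $\mathrm{id}_W\times\iota$ over the base $W\times X$ upgrades the conclusion to: $\iota$ is of pure grade $0$. Condition (a) then follows from the explicit normal bundle \eqref{eq normal W E1}, whose factors $\mathscr{T}_{\PP^1}$ and $\mathscr{T}_{\PP(\mathscr{T}_X)/X}$ have grade-$0$ Chern classes (the latter as in the proof of Proposition \ref{prop projbundleselfdualmult}). This completes the verification that $\mathcal{S}_1$ is admissible.
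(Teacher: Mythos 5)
Your overall route is the same as the paper's: completeness by inspection, the decompositions and $\mathfrak{S}_3$-invariance from the preceding discussion, the exceptional-divisor inclusions via Proposition \ref{prop multCK blow-up} and Remark \ref{rmk blow-up diagram degree 0}, $W_{ij}\subset E_1$ via Lemma \ref{lem proj subbundle}, transitivity for the remaining inclusions, and a direct computation showing $[W]\in\CH^*(E_1)_0$ for the crucial inclusion $W\subset E_1$ (your Thom--Porteous degeneracy-locus argument is an acceptable substitute for the paper's realization of $W$ as the zero locus of a section of $\mathscr{Q}_1\otimes\calO_{E_1}(1)$; both express $[W]$ as a polynomial in $\xi$ and $\pi^*c(\mathscr{T}_X)$).

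However, there is a genuine gap at exactly the delicate point. Knowing $[W]\in\CH^*(E_1)_0$ and arguing ``$\iota_*\iota^*\beta=[W]\cdot\beta$'' only controls $\iota_*$ on the image of $\iota^*$, and here $\iota^*$ is \emph{not} surjective: $\CH^*(W)$ is generated over $\pi_W^*\CH^*(X)$ by the two independent classes $\xi_1$ and $\xi_2$, whereas $\iota^*\CH^*(E_1)$ only contains polynomials in $\iota^*\xi=\xi_1+\xi_2$. (This is precisely the difference with Lemma \ref{lem proj subbundle}, where $\varphi^*$ \emph{is} surjective onto the Chow ring of the sub-projective-bundle, so the trick $\varphi_*\varphi^*=[\PP(\mathscr{F}')]\cdot(-)$ suffices.) Concretely, since $\xi_1^2=0$ one has $(\xi_1+\xi_2)^l=\xi_2^l+l\,\xi_1\xi_2^{l-1}$, so your argument gives $\iota_*$ of this combination but not of $\xi_2^l$ and $\xi_1\xi_2^{l-1}$ separately; and the class you ultimately need to push forward, namely $\Delta_W$ under $\mathrm{id}_W\times\iota$, is not a polynomial in $\xi_1^{(2)}+\xi_2^{(2)}$ over $\CH^*(W\times X)$, so it escapes your argument. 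The missing step is to handle $\iota_*(\xi_1\xi_2^{l-1}\cdot\pi_W^*\alpha)$ separately: $\xi_1$ is the class of $W_{ij}=\{t_{ij}\}\times\PP(\mathscr{T}_X)$ in $W$, so this pushforward factors through $W_{ij}\hookrightarrow E_1$, which is a genuine sub-projective-bundle and is covered by Lemma \ref{lem proj subbundle} --- an ingredient you have already set up for the inclusion $W_{ij}\subset E_1$ but do not invoke here. Combining the two then gives $\iota_*(\xi_1^a\xi_2^b\cdot\pi_W^*\alpha)$ in the correct graded piece for all $a,b$, and running the same computation for $\mathrm{id}_Z\times\iota$ yields that $\Gamma_\iota$ is of pure grade $0$; this is how the paper closes the argument.
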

	\begin{proof}
		The set $\mathcal{S}_1$ is obviously complete and we already saw that all the
		subvarieties in $\mathcal{S}_1$ are
		smooth and endowed with a $\mathfrak{S}_3$-invariant self-dual
		multiplicative Chow--K\"unneth
		decomposition. Hence it remains to verify condition \emph{(ii)} in the
		definition of an admissible set (Definition \ref{def complete
			admissible}). This condition involves the normal bundles and all possible
		inclusions of varieties in $\mathcal{S}_1$. The inclusions
		$W_{ij}\hookrightarrow W$ are taken care of by the isomorphisms
		\eqref{eq isom W} and \eqref{eq isom Wij}. For the inclusion of
		$W_{ij}\hookrightarrow \tilde{\Delta}_{ij}$, we only need to note
		that, under the
		isomorphism $\tilde{\Delta}_{ij}\cong \mathrm{Bl}_{\Delta_X}(X\times X)$,
		the subvariety 
		\[
		W_{ij}\cong\PP(\mathscr{T}_X)\hookrightarrow \mathrm{Bl}_{\Delta_X}(X\times X)
		\]
		is simply the exceptional divisor. The inclusion $E_1\subset X_1$ is also the
		exceptional divisor. Both of the above two cases are taken care of by
		Proposition
		\ref{prop multCK
			blow-up} and Remark \ref{rmk blow-up diagram degree 0}. It remains to deal with
		the inclusion $W\subset E_1$. Note that the
		normal bundle of $W\subset E_1$ is given by \eqref{eq normal W E1}. Let $\iota:
		W\hookrightarrow E_1$ be the inclusion.  Let $\xi\in\CH^1(E_1)$ be the Chern
		class of $\calO_{E_1}(1)$ and let $\xi_1 =
		p_1^*c_1(\calO_{\PP^1}(1))\in\CH^1(W)$ and
		$\xi_2=p_2^*c_1(\calO_{\PP(\mathscr{T}_X)}(1))\in\CH^1(W)$ be the pull-backs of
		the corresponding Chern classes. Equation \eqref{eq O1 on W} implies
		\[
		\iota^* \xi = \xi_1+\xi_2 
		\]
		and hence it follows that $\iota^*$ is compatible with the gradings on the Chow
		groups. On $E_1\cong\PP(\mathscr{T}_X\oplus \mathscr{T}_{X})$, the natural
		homomorphism $\calO_{E_1}(-1) \subset \pi_2^*(\mathscr{T}_X\oplus
		\mathscr{T}_X)$ gives rise to two homomorphism,
		\[
		\varphi_1 : \calO_{E_1}(-1)\hookrightarrow \pi_2^*\mathscr{T}_X\qquad
		\text{and}\qquad
		\varphi_2 : \calO_{E_1}(-1)\hookrightarrow \pi_2^*\mathscr{T}_X,
		\]
		by composing the two projections. Here
		$\pi_2:\PP(\mathscr{T}_X\oplus\mathscr{T}_X)\rightarrow X$ is the structure
		morphism. Let $\mathscr{Q}_1$ be the quotient of $\varphi_1$. Then $\varphi_2$
		induces a homomorphism $\calO_{E_1}(-1)\rightarrow \mathscr{Q}_1$. This can be
		viewed as a section $s$ of the sheaf $\mathscr{Q}_1\otimes \calO_{E_1}(1)$ whose
		vanishing locus is exactly $W$. It follows that
		\[
		\iota_*[W] = c_{d-1}(\mathscr{Q}_1\otimes \calO_{E_1}(1))\in \CH^{d-1}(E_1)_0.
		\]
		It follows that
		\[
		\iota_*(\xi_2^l+l\xi_2^{l-1}\xi_1) =\iota_*(\xi_1+\xi_2)^l=\iota_*\iota^*\xi^l =
		\iota_*[W]\cdot \xi^l\in \CH^*(E_1)_0.
		\]
		We also note that Lemma \ref{lem proj subbundle} implies that
		$\iota_*(\xi_1\xi_2^{l-1})\in\CH^*(E_1)_0$. It follows that
		\[
		\iota_*(\xi_1^a\xi_2^b)\in\CH^*(E_1)_0,\quad \forall a,b\geq 0.
		\]
		Thus one easily concludes that $\iota_*$ respects the gradings on the Chow
		groups. The above argument also works for the inclusion
		\[
		1\times \iota: Z\times W\longrightarrow Z\times E_1,
		\]
		where $Z$ is an arbitrary smooth projective variety with a self-dual
		multiplicative Chow--K\"unneth decomposition. We take $Z$ to be $W$. Then the
		graph of $\iota$ is the push-forward of $\Delta_W$ via the morphism $1\times
		\iota$. Hence $\iota$ is of pure grade 0. By Remark \ref{rmk admissible
			composition}, this proves that $\mathcal{S}_1$ is
		admissible.
	\end{proof}
	
	Let now (as in paragraph \ref{sec desing})
	$X_2:=\mathrm{Bl}_{Y_1}(X_1)$, where $Y_1 :=  \tilde{\Delta}_{12}
	\sqcup\tilde{\Delta}_{13}
	\sqcup \tilde{\Delta}_{23}$. By Proposition \ref{prop admissible blow-up},
	we see that the set
	\[
	\mathcal{S}_2=\set{\tilde{W},E'_1, X_2},
	\]
	where $\tilde{W}$ is the strict transform of $W$, is admissible. Moreover,
	$\tilde{W}$ is equipped with a $\mathfrak{S}_3$-invariant self-dual
	multiplicative Chow--K\"unneth decomposition. \medskip
	
	Let finally $X_3$ be the blow-up of $X_2$ along $Y_2:=\tilde{W}$. We then have 
	
	\begin{prop} \label{prop X3}
		The variety $X_3$ admits a $\mathfrak{S}_3$-invariant self-dual
		multiplicative Chow--K\"unneth decomposition.
	\end{prop}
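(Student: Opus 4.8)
The plan is to realize $X_3$ as one final smooth blow-up and to invoke the blow-up stability result of Proposition \ref{prop multCK blow-up} one last time; essentially all the work has already been done in assembling the admissible sets $\mathcal{S}_0$, $\mathcal{S}_1$, $\mathcal{S}_2$. First I would record that $X_2$ carries a $\mathfrak{S}_3$-invariant self-dual multiplicative Chow--K\"unneth decomposition. This is produced by starting from the product decomposition on $X_0 = X^3$ (self-dual by Proposition \ref{prop self-dualproduct}, multiplicative by Proposition \ref{prop multstable}, manifestly $\mathfrak{S}_3$-invariant) and applying Proposition \ref{prop multCK blow-up} twice: once to the admissible set $\mathcal{S}_0$ (Lemma \ref{lem S_0 admissible}) to pass from $X_0$ to $X_1 = \mathrm{Bl}_{Y_0}(X_0)$, and once to the admissible set $\mathcal{S}_1$ (Lemma \ref{lem S_1 admissible}) to pass from $X_1$ to $X_2 = \mathrm{Bl}_{Y_1}(X_1)$. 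Likewise, the center $Y_2 = \tilde{W}$ carries such a decomposition, since it is a member of the admissible set $\mathcal{S}_2$ obtained from $\mathcal{S}_1$ via Proposition \ref{prop admissible blow-up}.

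Second, I would check that the hypotheses of Proposition \ref{prop multCK blow-up} hold for the inclusion $\tilde{W} \subset X_2$. But this is precisely the content of the admissibility of $\mathcal{S}_2$: condition \emph{(i)} of Definition \ref{def complete admissible} yields that the Chern classes of the normal bundle $\mathscr{N}_{\tilde{W}/X_2}$ lie in $\CH^*(\tilde{W})_0$, and condition \emph{(ii)(b)} yields that the embedding morphism $\tilde{W} \hookrightarrow X_2$ is of pure grade $0$. These are exactly assumptions \emph{(i)} and \emph{(ii)} of Proposition \ref{prop multCK blow-up}. Applying that proposition to the blow-up $X_3 = \mathrm{Bl}_{\tilde{W}}(X_2)$ therefore equips $X_3$ with a self-dual multiplicative Chow--K\"unneth decomposition given by formula \eqref{eq blowupselfdualCK}.

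It remains to track the $\mathfrak{S}_3$-invariance. Here I would use that $W$, and hence its strict transform $\tilde{W}$, is stable under the $\mathfrak{S}_3$-action (this stability was established explicitly in the discussion preceding Lemma \ref{lem S_1 admissible}), so by functoriality of blow-ups the $\mathfrak{S}_3$-action lifts to $X_3$ and every morphism in the associated blow-up diagram is $\mathfrak{S}_3$-equivariant. Since the decomposition on $X_3$ is produced by the \emph{same} construction \eqref{eq blowupselfdualCK} that appears in Proposition \ref{prop self-dualCK}, where $G$-invariance is shown to be preserved whenever the decompositions on base and center are $G$-invariant, the resulting decomposition on $X_3$ is $\mathfrak{S}_3$-invariant. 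This completes the argument.

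I do not expect a genuine obstacle at this stage: the statement is a direct corollary of the admissibility of $\mathcal{S}_2$ together with the blow-up stability of the self-dual multiplicative property. The only point requiring a little care is the threading of the $\mathfrak{S}_3$-equivariance through the final blow-up, and this is handled by the functoriality of the construction and by the fact that the blow-up center $\tilde{W}$ is $\mathfrak{S}_3$-stable; all the substantive geometric and grading computations were carried out earlier in establishing that $\mathcal{S}_2$ is admissible.
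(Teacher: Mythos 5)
Your proposal is correct and follows exactly the paper's route: the paper's own proof is the one-line observation that the statement follows from the preceding discussion (the admissibility of $\mathcal{S}_0$, $\mathcal{S}_1$, $\mathcal{S}_2$ and the $\mathfrak{S}_3$-invariant decompositions on $X_2$ and $\tilde{W}$) together with Proposition \ref{prop multCK blow-up} applied to the blow-up of $X_2$ along $Y_2=\tilde{W}$. You have merely unpacked the same argument, including the correct handling of $\mathfrak{S}_3$-invariance via the equivariance of the construction in Proposition \ref{prop self-dualCK}.
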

	\begin{proof}
		The proposition follows immediately from the discussion above and from
		Proposition \ref{prop multCK blow-up} applied to $X_2$ blown up along $Y_2$.
	\end{proof}

	\subsection{Proof of Theorem \ref{thm main}}
	In section 4, we resolved the map $X^3 \dashrightarrow X^{[3]}$ into a morphism
	$p : X_3 \rightarrow X^{[3]}$. The smooth projective variety $X_3$ is naturally
	equipped with a $\mathfrak{S}_3$-action, and the morphism $p$ is generically the
	quotient morphism, meaning that there exists a $\mathfrak{S}_3$-invariant open
	subset $U\subset X_3$ such $p|_U$ is the quotient morphism. Proposition
	\ref{prop X3} shows that $X_3$ is naturally
	endowed with a $\mathfrak{S}_3$-invariant self-dual multiplicative
	Chow--K\"unneth decomposition, whenever $X$ is endowed with a self-dual
	multiplicative Chow--K\"unneth decomposition with the additional property that
	$c_p(X)$ sit in $\CH^p(X)_0$ for all $p$. 
	The following technical lemma  gives sufficient geometric conditions on the
	locus that is contracted by a generic quotient (for the action of a finite group
	$G$) morphism $p$  for a $G$-invariant self-dual multiplicative
	Chow--K\"unneth decomposition to descend along $p$.
	\begin{lem}\label{lem technical}
		Let $G$ be a finite group and let $X$ be a smooth projective variety with a
		$G$-action. Assume that $X$ is endowed with a $G$-invariant self-dual
		multiplicative Chow--K\"unneth decomposition $\{\pi_X^i\}_{i=0}^{2d}$, where
		$d=\dim X$. Let $p:X\rightarrow Y$ be a generically finite morphism such that
		the following conditions hold.
		\begin{enumerate}[(a)]
			\item The morphism $p$ is generically the quotient morphism of the $G$-action.
			Namely, there exists a $G$-invariant open subset $U$ of $X$ such that $p|_U$ is
			the quotient morphism $U\rightarrow U/G$.
			
			\item The morphism $p$ contracts two smooth divisors $D_i\subset X$, $i=1,2$.
			Let $Y_i\subset Y$ be the image of $D_i$. Each $D_i$ is equipped with a
			multiplicative Chow--K\"unneth decomposition which is compatible with that of
			$X$. (This means that  the inclusion morphisms are of pure grade 0.)
			
			\item All components of $D_2\times_{Y_2} D_2$ are of dimension $d$ and viewed as
			cycles on $X\times X$, they are all in $\CH^d(X\times X)_0$. 
			
			\item $Y_1$ is smooth and all the Chern classes of
			$(p|_{D_1})^*\mathscr{T}_{Y_1}$ are in $\CH^*(D_1)_0$\,; the Chern classes of
			$p^*\mathscr{T}_Y$ sit in $\CH^*(X)_0$.
			
			\item $Y_1$ is disjoint from $p(D_2\backslash D_1)$.
			
			\item The morphism $p|_{D_1}:D_1\rightarrow Y_1$ is smooth and $D_1\times_{Y_1}
			D_1$, viewed as a cycle on $D_1\times D_1$, is in $\CH^*(D_1\times D_1)_0$.
		\end{enumerate}
		Then ${}^t\Gamma_p\circ\Gamma_p$ sits in $\CH^d(X\times X)_0$. Furthermore, $Y$
		admits a multiplicative Chow--K\"unneth decomposition such that the Chern
		classes of $Y$ sit in $\CH^*(Y)_0$.
	\end{lem}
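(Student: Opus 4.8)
The statement has two parts, and the second will follow quickly from the first together with the results of Sections \ref{sec CK}--\ref{sec desing}. The plan is therefore to reduce everything to the single claim that $C := {}^t\Gamma_p\circ\Gamma_p$ lies in $\CH^d(X\times X)_0$. Granting this, Proposition \ref{prop self-dual CK generic finite} endows $Y$ with a self-dual Chow--K\"unneth decomposition and shows that $p$ is of pure grade $0$, while Proposition \ref{prop multCK generic finite} shows this decomposition is multiplicative. For the Chern classes of $Y$, I would argue that $p^\ast$ preserves the grading (Proposition \ref{prop action of correspondence}\emph{(i)}, since $p$ is of pure grade $0$) and is injective (because $p_\ast p^\ast = N\cdot\mathrm{id}$ with $N=|G|\neq 0$); as $p^\ast c_i(\mathscr{T}_Y)=c_i(p^\ast\mathscr{T}_Y)\in\CH^\ast(X)_0$ by \emph{(d)}, injectivity forces the nonzero-grade parts of $c_i(\mathscr{T}_Y)$ to vanish, giving $c_i(\mathscr{T}_Y)\in\CH^\ast(Y)_0$.

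For the main claim, the starting point is that $C$ is supported on the fibre product $X\times_Y X\subset X\times X$. By \emph{(a)} the restriction of $C$ to $U\times U$ agrees with the composition computed for the quotient morphism $p|_U$, hence equals $\sum_{g\in G}\Gamma_g$ there. Applying the localization exact sequence to the open immersion $U\times U\hookrightarrow X\times X$, I would write
\[
C = \sum_{g\in G}\Gamma_g + \Xi,
\]
where $\Xi$ is a $d$-dimensional cycle supported on the components of $X\times_Y X$ lying over $Y_1\cup Y_2$, that is, over the images of the contracted divisors. The first summand is in $\CH^d(X\times X)_0$: by Lemma \ref{lem corr grade 0} the $G$-invariance of $\{\pi_X^i\}$ is equivalent to each graph $\Gamma_g$ being of pure grade $0$. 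Using \emph{(e)} to separate the loci over $Y_1$ and over $Y_2\setminus Y_1$, I would split $\Xi=\Xi_1+\Xi_2$ with $\Xi_1$ supported inside $D_1\times_{Y_1}D_1$ and $\Xi_2$ inside $D_2\times_{Y_2}D_2$.

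The term $\Xi_2$ is the easy one. By \emph{(c)} the variety $D_2\times_{Y_2}D_2$ is of pure dimension $d$, so there is no excess: $\Xi_2$ is a $\Q$-linear combination of the (dimension-$d$) irreducible components of $D_2\times_{Y_2}D_2$, each of which lies in $\CH^d(X\times X)_0$ by \emph{(c)}. Hence $\Xi_2\in\CH^d(X\times X)_0$.

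The term $\Xi_1$ is the heart of the matter and the step I expect to be the main obstacle, because $D_1\times_{Y_1}D_1$ has dimension strictly larger than $d$, so $\Xi_1$ is a genuine excess-intersection class rather than a combination of components. Here I would use that $p|_{D_1}\colon D_1\to Y_1$ is smooth (\emph{(f)}) and apply Fulton's excess intersection formula \cite{fulton} to the intersection of the two graphs inside $X\times Y\times X$. This expresses $\Xi_1$ as $(\iota\times\iota)_\ast\big(c(\mathscr{E})\cap[D_1\times_{Y_1}D_1]\big)$ for a suitable excess bundle $\mathscr{E}$, where $\iota\colon D_1\hookrightarrow X$. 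The bundle $\mathscr{E}$ is built out of $\mathscr{N}_{D_1/X}$ and the relative tangent bundle $\mathscr{T}_{D_1/Y_1}$, equivalently out of the conormal data $p^\ast\mathscr{T}_Y$ of the graph and $(p|_{D_1})^\ast\mathscr{T}_{Y_1}$; the hypotheses in \emph{(d)} are exactly what is needed to guarantee, via the standard tangent--normal exact sequences, that the Chern classes of $\mathscr{E}$ sit in $\CH^\ast(D_1)_0$. Since $[D_1\times_{Y_1}D_1]\in\CH^\ast(D_1\times D_1)_0$ by \emph{(f)}, the capped class is of pure grade $0$; and since $\iota$ is of pure grade $0$ by \emph{(b)}, so is $\iota\times\iota$, whence push-forward preserves the grading (Proposition \ref{prop action of correspondence}). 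Therefore $\Xi_1\in\CH^d(X\times X)_0$, and combining the three contributions yields $C\in\CH^d(X\times X)_0$, completing the argument. The delicate points, on which I would spend the most care, are the precise identification of the excess bundle $\mathscr{E}$ and the verification that its Chern classes are of grade $0$.
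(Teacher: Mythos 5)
Your proposal is correct and follows essentially the same route as the paper: decompose the support of ${}^t\Gamma_p\circ\Gamma_p$ into the dominating components (the graphs $\Gamma_g$, of pure grade $0$ by $G$-invariance), the dimension-$d$ components inside $D_2\times_{Y_2}D_2$ handled by \emph{(c)}, and an excess-intersection contribution from $D_1\times_{Y_1}D_1$ whose excess bundle is controlled by \emph{(d)} and \emph{(f)} exactly as you indicate (the paper identifies it explicitly via the sequences $0\to\mathscr{T}_X\to\mathscr{T}_X\oplus p^*\mathscr{T}_Y\to\mathscr{N}_p\to 0$ and $0\to f^*\mathscr{T}_{Y_1}\to\mathscr{N}_1\to p_1^*\mathscr{N}_{D_1/X}\oplus p_2^*\mathscr{N}_{D_1/X}\to 0$). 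Your alternative derivation of $c(\mathscr{T}_Y)\in\CH^*(Y)_0$ from the injectivity and grade-preservation of $p^*$ is a valid, equivalent substitute for the paper's direct computation with the projectors $\pi_Y^i=\frac1N\Gamma_p\circ\pi_X^i\circ{}^t\Gamma_p$.
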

	
	\begin{proof}
		Note that we have the following fiber square
		\[
		\xymatrix{
			Z\ar[r] \ar[d] & X\times X\ar[d]^g\\
			X\times X \ar[r]^f & X\times Y\times X
		}
		\]
		where $f(x_1,x_2) = (x_1,p(x_1),x_2)$ and $g(x_1,x_2)=(x_1,p(x_2),x_2)$. By
		definition ${}^t\Gamma_p\circ\Gamma_p$ is supported on $Z$. A component of $Z$
		is called \textit{dominating} if it dominates $X$ via any of the two
		projections. A general point on a dominating component is of the form
		$(x_1,x_2)$ such that
		$p(x_1)=p(x_2)$ is a general point on $Y$. The assumption (a) implies that $x_1$
		and $x_2$ are in the same orbit. Hence any dominating component of $Z$ must
		be the graph of an element of $G$. Since the multiplicative Chow--K\"unneth
		decomposition on $X$ is $G$-invariant, we see that a dominating component of
		$Z$ is in $\CH^d(X\times X)_0$. The components of $Z$ that are contained in
		$D_2\times_{Y_2} D_2$ are taken care of by assumption (c). By (e), all the
		remaining components of $Z$ are contained in $D_1\times_{Y_1} D_1$. We use the
		excess intersection formula \cite[\S 6.3, Theorem 6.3]{fulton} to deal with the
		remaining components. To do that let us fix some notations. Let $\mathscr{N}_p$
		be the normal bundle of the graph of $p$ inside $X\times Y$. We then have the
		following short exact sequence of sheaves on $X$.
		\[
		\xymatrix{
			0\ar[r] & \mathscr{T}_X\ar[r]^{(\mathrm{id},\mathrm{d}p)\qquad} &
			\mathscr{T}_X\oplus p^*\mathscr{T}_Y\ar[r] &\mathscr{N}_p\ar[r] &0.
		}
		\]
		It follows that $c(\mathscr{N}_p)=c(p^*\mathscr{T}_Y)$. Hence, by (d),
		$$
		c(\mathscr{N}_p)\in\CH^*(X)_0.
		$$ 
		We need to understand the normal bundle $\mathscr{N}_1$ of $D_1\times_{Y_1} D_1$
		inside $X\times X$. Consider the successive inclusion
		\[
		\xymatrix{
			D_1\times_{Y_1} D_1 \ar[r]^{j_1} &D_1\times D_1\ar[r]^{j_2} &X\times X.
		}
		\]
		The assumption (f) implies that the normal bundle associated to the first
		inclusion is $f^*\mathscr{T}_{Y_1}$, where $f: D_1\times_{Y_1} D_1 \rightarrow
		Y_1$ is the natural morphism\,; the normal bundle associated to the second
		inclusion is $p_1^*\mathscr{N}_{D_1/X} \oplus p_2^*\mathscr{N}_{D_1/X}$, where
		$p_1$ and $p_2$ are the two projections of $D_1\times D_1$. Hence
		$\mathscr{N}_1$ fits into the following short exact sequence
		\begin{equation}\label{eq normal ses2}
		\xymatrix{
			0\ar[r] &f^*\mathscr{T}_{Y_1}\ar[r] &\mathscr{N}_1\ar[r] &
			p_1^*\mathscr{N}_{D_1/X} \oplus p_2^*\mathscr{N}_{D_1/X}\ar[r] &0,
		}
		\end{equation}
		where the last term is viewed as its restriction to $D_1\times_{Y_1} D_1$. By
		\cite[\S 6.3, Theorem 6.3]{fulton}, the contribution to
		${}^t\Gamma_p\circ\Gamma_p$ which comes from the component $D_1\times_{Y_1} D_1$
		of $Z$ is given by
		\[
		j_*c_{\mathrm{top}}(j^*q_1^*\mathscr{N}_p/\mathscr{N}_1).
		\]
		Here, $j=j_2\circ j_1: D_1\times_{Y_1} D_1 \rightarrow X\times X$ is the
		inclusion and $q_1:X\times X\rightarrow X$ is the projection to the first
		factor. To show that $j_*c_{\mathrm{top}}(j^*q_1^*\mathscr{N}_p/\mathscr{N}_1)$
		lies in $\CH^*(X\times X)_0$, it suffices to check that 
		\[
		j_* c(\mathscr{N}_1)\in\CH^*(X\times X)_0.
		\]
		Thanks to the short exact sequence \eqref{eq normal ses2}, this is further
		reduced to showing that
		\[
		j_* c(f^*\mathscr{T}_{Y_1})\in\CH^*(X\times X)_0.
		\]
		Note that $f^*\mathscr{T}_{Y_1}=j_1^* p_1^*(p|_{D_1})^*\mathscr{T}_{Y_1}$ and
		hence
		\[
		j_* c(f^*\mathscr{T}_{Y_1}) = j_{2,*} j_{1,*} j_1^*
		p_1^*c((p|_{D_1})^*\mathscr{T}_{Y_1}) = j_{2,*}\Big( [D_1\times_{Y_1} D_1]\cdot 
		p_1^*c((p|_{D_1})^*\mathscr{T}_{Y_1}) \Big)
		\]
		It follows from the first part of (d), the second part of (f) and the
		compatibility of $j_{2,*}$ with the grading of the Chow rings that $j_*
		c(f^*\mathscr{T}_{Y_1})\in\CH^*(X\times X)_0$.
		
		The conclusion concerning the Chern classes of $Y$ being in the graded-0 part
		follows immediately from the second part of (d) and the explicit Chow--K\"unneth
		decomposition on $Y$ as is given by Proposition \ref{prop self-dual CK generic
			finite}.
	\end{proof}

	We are finally in a position to prove Theorem \ref{thm main}, which we
	restate here for the convenience of the reader.
	
	\begin{thm} [Theorem \ref{thm main}]
		Let $X$ be a smooth projective variety that admits a self-dual multiplicative
		Chow--K\"unneth decomposition $\{\pi^i_X\}$. Let us denote
		as always $\CH^i(X)_s := (\pi^{2i-s}_X)_*\CH^i(X)$. Assume that the
		Chern classes $c_p(X)$ of $X$ belong to $\CH^p(X)_0$. Then the
		Hilbert cube $X^{[3]}$ admits a  self-dual multiplicative
		Chow--K\"unneth decomposition, with the property that the Chern classes
		$c_p(X^{[3]})$ sit in the degree-zero graded pieces $\CH^p(X^{[3]})_0$.
	\end{thm}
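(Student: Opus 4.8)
The plan is to build the Chow--K\"unneth decomposition on $X^{[3]}$ by descending the one already available on the resolution $X_3$. First I would invoke Proposition~\ref{prop X3}, which equips $X_3$ with a $\mathfrak{S}_3$-invariant self-dual multiplicative Chow--K\"unneth decomposition. Since this decomposition is produced by the sequence of smooth blow-ups described in Section~\ref{sec desing}, and each such step preserves the property that the Chern classes sit in the graded-$0$ part (the last assertion of Proposition~\ref{prop multCK blow-up}), the Chern classes $c_p(X_3)$ lie in $\CH^p(X_3)_0$. It then remains to descend this decomposition along the generically finite morphism $p\colon X_3\to X^{[3]}$, and the crux of the argument is to check that $p$ satisfies all the hypotheses of Lemma~\ref{lem technical}, applied with $G=\mathfrak{S}_3$ and with the r\^ole of $\dim X$ there played by $3d$, where $d=\dim X$.

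The geometric input needed to verify these hypotheses is exactly the content of Proposition~\ref{prop summary}. I would take the two contracted divisors to be $D_1=E''_1$ and $D_2=E_W$, with respective images $Y_1=B_3\cong\mathrm{Gr}(2,\mathscr{T}_X)$ and $Y_2=B_2$. Condition~(a) of Lemma~\ref{lem technical} is Proposition~\ref{prop desing} together with the remark that $p$ is generically the quotient by $\mathfrak{S}_3$. Condition~(b) holds because $E''_1$ and $E_W$ carry self-dual multiplicative Chow--K\"unneth decompositions and are of pure grade $0$ in $X_3$, by the admissibility framework of Lemma~\ref{lem S_1 admissible} and Proposition~\ref{prop admissible blow-up} together with Remark~\ref{rmk blow-up diagram degree 0}. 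Condition~(e) is immediate from Proposition~\ref{prop summary}, since $p$ maps $E_W\setminus E''_1$ into $B_2\setminus B_3$, which is disjoint from $B_3=Y_1$. For the Chern-class requirements in~(d) one uses that $B_3$ is a Grassmann bundle over $X$, so that $\mathscr{T}_{B_3}$ and $p^*\mathscr{T}_{X^{[3]}}$ are assembled from $\mathscr{T}_X$ via the standard relative tangent and normal-bundle exact sequences; since $c_p(X)\in\CH^p(X)_0$ by hypothesis, these Chern classes remain in the graded-$0$ part.

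The main obstacle is verifying conditions~(c) and~(f), which ask that the fibre products $E_W\times_{B_2}E_W$ and $E''_1\times_{B_3}E''_1$ define cycles in the graded-$0$ parts of $\CH^*(X_3\times X_3)$ and $\CH^*(E''_1\times E''_1)$ respectively. Here I would exploit the explicit description $E_W\cong\PP^1\times\PP(\mathscr{E})$ with $p$ contracting the $\PP^1$-factor, so that $E_W\times_{B_2}E_W\cong\PP^1\times\PP^1\times\PP(\mathscr{E})$; dimension-counting confirms that all its components have dimension $3d$, as required by~(c). Similarly $p|_{E''_1}\colon E''_1\to B_3$ is a smooth fibration and $E''_1\times_{B_3}E''_1$ admits an analogous relative-product description. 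That these cycles are of pure grade $0$ is then established exactly as in the admissibility computations: the classes are expressed through relative diagonals and through Chern and Segre classes of bundles built from $\mathscr{T}_X$, all of which lie in $\CH^*(-)_0$, using that the relevant push-forwards respect the grading (Lemma~\ref{lem proj subbundle} and Proposition~\ref{prop admissible blow-up}).

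With all six hypotheses in place, Lemma~\ref{lem technical} gives that ${}^t\Gamma_p\circ\Gamma_p$ sits in $\CH^{3d}(X_3\times X_3)_0$ and that $X^{[3]}$ carries a multiplicative Chow--K\"unneth decomposition whose Chern classes lie in the graded-$0$ part. To secure self-duality I would invoke Proposition~\ref{prop self-dual CK generic finite}, whose sole hypothesis is precisely ${}^t\Gamma_p\circ\Gamma_p\in\CH_{3d}(X_3\times X_3)_0$, and Proposition~\ref{prop multCK generic finite} for multiplicativity of the descended decomposition. Combining these, the explicit projectors $\pi^i_{X^{[3]}}=\tfrac{1}{N}\,\Gamma_p\circ\pi^i_{X_3}\circ{}^t\Gamma_p$ furnish a self-dual multiplicative Chow--K\"unneth decomposition on $X^{[3]}$ with Chern classes in $\CH^p(X^{[3]})_0$, which is the assertion of the theorem.
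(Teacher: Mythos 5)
Your proposal follows essentially the same route as the paper: equip $X_3$ with the $\mathfrak{S}_3$-invariant self-dual multiplicative Chow--K\"unneth decomposition via Proposition \ref{prop X3}, verify the hypotheses of Lemma \ref{lem technical} for $p:X_3\to X^{[3]}$ with $D_1=E''_1$, $D_2=E_W$, $Y_1=B_3$, $Y_2=B_2$, and descend via Propositions \ref{prop self-dual CK generic finite} and \ref{prop multCK generic finite}. The one concrete slip is in your verification of condition (c): the claim $E_W\times_{B_2}E_W\cong\PP^1\times\PP^1\times\PP(\mathscr{E})$ is false, because $\PP(\mathscr{E})\to B_2$ is not an isomorphism but contracts the divisor $\PP(\mathscr{T}_{\PP(\mathscr{T}_X)/X})$ onto $B_3$, so the fibre product acquires a second $3d$-dimensional component, $\PP^1\times\PP^1\times\mathrm{G}(1,2,\mathscr{T}_X)\times_{\mathrm{G}(2,\mathscr{T}_X)}\mathrm{G}(1,2,\mathscr{T}_X)$, which must also be shown to lie in $\CH_{3d}(X_3\times X_3)_0$ (the paper does this via the flag-bundle identification $\PP(\mathscr{T}_{\PP(\mathscr{T}_X)/X})\cong\mathrm{G}(1,2,\mathscr{T}_X)$ and the explicit grading on $\CH^*(D_2\times D_2)$); your general method of reducing to Chern classes of bundles built from $\mathscr{T}_X$ does cover this component, but as written the check of (c) is incomplete.
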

	\begin{proof}
		In Proposition \ref{prop desing}, we obtain a generically finite quotient
		morphism $p: X_3\rightarrow X^{[3]}$ by the symmetric group $\mathfrak{S}_3$. By
		Proposition \ref{prop X3}, $X_3$ has a
		$\mathfrak{S}_3$-invariant self-dual multiplicative Chow--K\"unneth
		decomposition. It then follows from Proposition \ref{prop multCK generic finite}
		that $X^{[3]}$ is endowed with a self-dual multiplicative Chow--K\"unneth
		decomposition as long as ${}^t\Gamma_p\circ \Gamma_p$ belongs to
		$\CH^d(X_3\times X_3)_0$.
		To achieve this, we recall from Proposition \ref{prop summary} that the morphism
		$p$ contracts a divisor $E''_1\subset X_3$ to $B_3\subset X^{[3]}$ and another
		divisor $E_W$ to $B_2\subset X^{[3]}$. Here $B_2$ and $B_3$ form the
		stratification of Section \ref{sec stratification}. $E_W$ is  the exceptional
		divisor of the blow-up $X_3\rightarrow
		X_2$, and $E''_1$ is the strict transform of $E'_1 \subset X_2$; see Section
		\ref{sec desing} for the notation. We would like to apply Lemma \ref{lem
			technical} to $p:X_3\rightarrow
		X^{[3]}$ with $D_1=E''_1$, $D_2=E_W$, $Y_1=B_3$ and $Y_2=B_2$. 
		We need to verify
		all the assumptions of Lemma \ref{lem technical}. For simplicity, we write
		$Y=X^{[3]}$.
		
		\emph{Assumption (a)}. This is immediate from the construction of $p$ as the
		desingularization of $X^3\dashrightarrow X^{(3)}$.
		
		\emph{Assumption (b)}. We need to see that $D_1=E''_1$ and $D_2=E_W$ have
		multiplicative Chow--K\"unneth decompositions that are compatible with that of
		$X_3$. The case of $D_2$ follows from the fact that it is the exceptional
		divisor of the last blow-up $X_3\rightarrow X_2$\,; see Remark \ref{rmk blow-up
			diagram degree 0}.  The case of $D_1$ can be done
		as follows. By the discussion following Lemma \ref{lem S_1 admissible}, we have
		a complete admissible set $\mathcal{S}_2$ of subvarieties of $X_2$.  Hence we
		get
		that
		\[
		\mathcal{S}_3=\mathrm{Bl}_{Y_2}(\mathcal{S}_2)=\{E''_1,X_3\}
		\] 
		is admissible (see Proposition \ref{prop admissible blow-up}) and, in
		particular, the inclusion $D_1\hookrightarrow X_2$ is of pure grade 0.
		
		\emph{Assumption (c)}. We have the isomorphism $D_2\cong \PP^1\times
		\PP(\mathscr{E})$\,;
		see \eqref{eq isom E_W}. Under this isomorphism, the grading of the Chow ring of
		$D_2$ and $D_2\times D_2$ are given by 
		\begin{align}
		\CH^*(D_2)_s & = \sum_{a,b} \xi^a\zeta^{b}\cdot \CH^*(X)_s \, ; \nonumber \\
		\CH^*(D_2\times D_2)_s & = \sum_{a_1,a_2,b_1,b_2}
		\xi_1^{a_1}\xi_2^{a_2}{\zeta}_1^{b_1}{\zeta}_2^{b_2}\cdot \CH^*(X\times X)_s,
		\label{eq grading D2 D2}
		\end{align}
		where $\xi$ is the Chern class of the $\calO(1)$-bundle on $\PP(\mathscr{E})$
		and $\zeta$ is the Chern class of the $\calO(1)$-bundle on $\PP(\mathscr{T}_X)$
		pulled back to $\PP(\mathscr{E})$.
		The morphism $D_2\rightarrow Y_2$ can be decomposed as the projection
		$\PP^1\times \PP(\mathscr{E})\longrightarrow \PP(\mathscr{E})$ followed by a
		morphism $\PP(\mathscr{E})\rightarrow B_2$ contracting
		$\PP(\mathscr{T}_{\PP(\mathscr{T}_X)/X})$ to $B_3$. It follows that
		$D_2\times_{Y_2} D_2$ consists of two components.
		Then one component of $D_2\times _{Y_2} D_2$ is $\PP^1\times \PP^1\times
		\PP(\mathscr{E})$ which lies in $\CH_{3d}(D_2\times D_2)_0$. The second
		component  of $D_2\times _{Y_2} D_2$  sits above
		$Y_1=B_3\cong\mathrm{G}(2,\mathscr{T}_X)$, which is easily seen to be
		$\PP^1\times \PP^1 \times \PP(\mathscr{T}_{\PP(\mathscr{T}_X)/X})\times_{B_3}
		\PP(\mathscr{T}_{\PP(\mathscr{T}_X)/X})$. We note that there is a canonical
		isomorphism
		\[
		\PP(\mathscr{T}_{\PP(\mathscr{T}_X)/X})\cong \mathrm{G}(1,2,\mathscr{T}_X)
		\]
		where $\mathrm{G}(1,2,\mathscr{T}_X)$ is the flag variety bundle over $X$. Under
		this isomorphism, the morphism
		$\PP(\mathscr{T}_{\PP(\mathscr{T}_X)/X})\rightarrow B_3$ becomes the natural
		projection $\mathrm{G}(1,2,\mathscr{T}_X)\rightarrow
		\mathrm{G}(2,\mathscr{T}_X)$. Then the second component becomes
		\[
		\PP^1 \times \PP^1 \times \mathrm{G}(1,2,\mathscr{T}_X)
		\times_{\mathrm{G}(2,\mathscr{T}_X)} \mathrm{G}(1,2,\mathscr{T}_X)
		\]
		which can again been directly checked to be in $\CH_{3d}(D_2\times D_2)_0$,
		thanks to the equation \eqref{eq grading D2 D2}. Since the inclusion
		$D_2\hookrightarrow X_3$ respects the gradings, we see that all components of
		$D_2\times_{Y_2} D_2$ are contained in $\CH_{3d}(X_3\times X_3)_0$.
		
		\emph{Assumption (e)}. We have $p(Y_2\backslash Y_1) = B_2\backslash B_3$.
		
		\emph{Assumptions (d) and (f)}. We already saw that $Y_1\cong
		\mathrm{G}(2,\mathscr{T}_X)$ and hence $Y_1$ is smooth. Now we give an explicit
		description of the morphism $D_1=E''_1\rightarrow
		Y_1=\mathrm{G}(2,\mathscr{T}_X)$. Consider the following diagram
		\begin{equation}\label{eq CD E''_1}
		\xymatrix{
			&E''_1\ar[ld]_{\rho_2}\ar[rd]^{\rho'_1} & \\
			E'_1\ar[d]_{\rho_1} & &\PP(\mathscr{E}_2\oplus
			\mathscr{E}_2)\ar[d]^{\pi''}\ar[lld]_{\rho'_2}\\
			E_1=\PP(\mathscr{T}_X\oplus \mathscr{T}_X)\ar[rd]_{\pi_2} & &
			\mathrm{G}(2,\mathscr{T}_X)=B_3\ar[ld]^{\pi'_2}\\
			&X &
		}
		\end{equation}
		Here $\mathscr{E}_2$ is the natural rank-2 subbundle of
		$\mathscr{T}_X|_{\mathrm{G}(2,\mathscr{T}_X)}$ on $\mathrm{G}(2,\mathscr{T}_X)$.
		The morphism $\rho'_2$ is the blow-up of $E_1$ along $W$ and $\rho'_1$ is the
		blow-up of $\PP(\mathscr{E}_2\oplus\mathscr{E}_2)$ along the union of
		$\tilde{W}_{ij}$, where
		\begin{align*}
		\tilde{W}_{12} & =\PP(\mathscr{E}_2)\hookrightarrow \PP(\mathscr{E}_2\oplus
		\mathscr{E}_2),\quad [v]\mapsto [0,v]\,;\\
		\tilde{W}_{23} & =\PP(\mathscr{E}_2)\hookrightarrow \PP(\mathscr{E}_2\oplus
		\mathscr{E}_2),\quad [v]\mapsto [v,0]\,;\\
		\tilde{W}_{13} & =\PP(\mathscr{E}_2)\hookrightarrow \PP(\mathscr{E}_2\oplus
		\mathscr{E}_2),\quad [v]\mapsto [v,-v].
		\end{align*}
		Furthermore, we have $p|_{E''_1}=\pi''\circ \rho'_1$. One uses this description
		and Lemma \ref{lem blow-up tangent} to study the relative tangent bundle
		$\mathscr{T}_{E''_1/B_3}$ (and hence also $(p|_{E''_1})^*\mathscr{T}_{B_3}$) and
		shows that their Chern classes are in $\CH^*(E''_1)_0$. The smoothness of
		$p|_{E''_1}$ can be checked by showing that the fibers are all smooth. Now we
		want to show that $E''_1\times_{B_3} E''_1$ is in $\CH^*(E''_1\times E''_1)_0$.
		This can be done by a Chern class computation. The multiplicative
		Chow--K\"unneth decomposition on $E''_1$ is constructed from that of $X$ along
		the morphisms $\pi_2$, $\rho_1$ and $\rho_2$. The induced grading on the Chow
		ring of $E''_1$ can be explicitly described using the canonical cycles appearing
		in each step. By a ``change of basis" computation, those cycles can be replaced
		by those appearing in the construction of $\pi'_2$, $\pi''$ and $\rho'_1$. More
		precisely, we have
		\begin{equation}\label{eq grading E1 E1}
		\CH^*(E''_1\times E''_1)_s = \CH^*(X\times X)_s[c_{\cdot}(\mathscr{E}_2)_1,
		c_{\cdot}(\mathscr{E}_2)_2, \xi'_1, \xi'_2, \tau_{ij,1}, \tau_{ij,2}]
		\end{equation}
		where the right-hand side means polynomials in the given variables with
		coefficients
		in $\CH^*(X\times S)_s$. Here $\xi'$ is the $\calO(1)$-class on
		$\PP(\mathscr{E}_2\oplus \mathscr{E}_2)$ and $\tau_{ij}$ are the classes of the
		exceptional divisors of the blow-up $\rho'_1$\,; the subscripts ``1" and ``2"
		indicate which factor they come from. Let $f:E''_1\rightarrow X$ be the natural
		morphism, then 
		\[
		E''_1\times_{B_3} E''_1\subset (f\times f)^{-1}(\Delta_X)
		\]
		is the vanishing locus of the morphism
		\[
		p_1^*\mathscr{E}_2 \rightarrow \frac{\mathscr{T}_X}{p_2^*\mathscr{E}_2}.
		\]
		Given the formula \eqref{eq grading E1 E1}, a Chern class computation shows that
		$E''_1\times_{B_3} E''_1\in \CH^*(E''_1\times E''_1)_0$.
		
		Now it only remains to show that the Chern classes of $p^*\mathscr{T}_Y$ are
		contained in $\CH^*(X)_0$. We consider the short exact sequence
		\[
		\xymatrix{
			0\ar[r] & p^*\Omega_{Y}^1\ar[r] &\Omega_{X_3}^1\ar[r] &\Omega_{X_3/Y}^1\ar[r]
			&0.
		}
		\]
		Since $c(\mathscr{T}_X)\in\CH^*(X_3)_0$, we only need to show that the Chern
		classes of $\Omega^1_{X_3/Y}$ are in $\CH^*(X_3)_0$. Note that
		$\Omega^1_{X_3/Y}$ restricted to $\tilde{E}_{ij}$ is simply the conormal bundle
		of $\tilde{E}_{ij}$ in $X_3$. From the short exact sequence
		\[
		\xymatrix{
			0\ar[r] & \Omega^1_{X_3/Y}|_{E''_1\cup E_W}\otimes\calO(-\sum \tilde{E}_{ij})
			\ar[rr] && \Omega^1_{X_3/Y}\ar[rr] &&\Omega^1_{X_3/Y}|_{\cup
				\tilde{E}_{ij}}\ar[r] &0
		}
		\]
		we see that it suffices to show that the Chern classes of $
		\Omega^1_{X_3/Y}|_{E''_1\cup E_W}$ pushes forward into $\CH^*(X_3)_0$. Then we
		can again use a similar short exact sequence
		\[
		\xymatrix{
			0\ar[r] &\Omega^1_{X_3/Y}|_{E_W}\otimes \calO_{E_W}(-E''_1\cap E_W)\ar[rr] &&
			\Omega^1_{X_3/Y} |_{E''_1\cup E_W}\ar[rr] &&\Omega^1_{X_3/Y}|_{E''_1}\ar[r] &0
		}
		\]
		to reduce to showing that the Chern classes of $\Omega^1_{X_3/Y}|_{E''_1}$ and
		$\Omega^1_{X_3/Y}|_{E_W}$ are in the graded-0 part. But then this becomes
		immediate
		since these two sheaves are simply $\Omega^1_{E''_1/B_3}$ and
		$\Omega^1_{E_W/B_2}$ respectively.
		
		Thus we verified all the assumptions of Lemma \ref{lem technical} and the main
		theorem follows.
	\end{proof}
	
	\section{Multiplicative Chow--K\"unneth decomposition for $X^{[1,2]}$ and
		$X^{[2,3]}$}
	
	Let $X$ be a projective variety. Recall that,
	for positive integers $n<m$, the nested Hilbert scheme $X^{[n,m]}$ is the
	projective variety consisting of
	$\{(x,y) : x \subset y\} \subset  X^{[n]} \times X^{[m]}$.
	If $X$ is smooth and has dimension $\geq 3$, Cheah \cite{cheah} showed that 
	the
	nested Hilbert scheme $X^{[n,m]}$ is smooth if and only if it is one of
	$X^{[1,2]}$ or $X^{[2,3]}$.
	
	\begin{thm}\label{thm mainnested}
		Let $X$ be a smooth projective variety that admits a self-dual multiplicative
		Chow--K\"unneth decomposition. Assume that the Chern
		classes of $X$ satisfy $c_p(X) \in \CH^p(X)_0$. 
		Then the nested Hilbert
		schemes $X^{[1,2]}$ and $X^{[2,3]}$ also admit a self-dual multiplicative
		Chow--K\"unneth decomposition, with the property that the Chern classes
		$c_p(X^{[1,2]})$ sit in $\CH^p(X^{[1,2]})_0$, \emph{resp.}~$c_p(X^{[2,3]})$ sit
		in $\CH^p(X^{[2,3]})_0$.
	\end{thm}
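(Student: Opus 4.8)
The plan is to realise each of the two nested Hilbert schemes as a \emph{single} smooth blow-up and then invoke Proposition \ref{prop multCK blow-up} directly, so that—in contrast with the case of $X^{[3]}$—no generically finite quotient, and hence no analogue of Lemma \ref{lem technical}, is needed. For $X^{[1,2]}$, the map $(x,\xi)\mapsto(x,y)$ sending a flag to its marked point $x$ and the residual point $y$ of $\xi$ identifies $X^{[1,2]}$ with the blow-up $\mathrm{Bl}_{\Delta_X}(X\times X)$ of $X\times X$ along the diagonal. We equip $X\times X$ with the product self-dual multiplicative Chow--K\"unneth decomposition (Propositions \ref{prop self-dualproduct} and \ref{prop multstable}) and $\Delta_X\cong X$ with $\{\pi_X^i\}$. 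The pair $\{\Delta_X\subset X\times X\}$ meets the hypotheses of Proposition \ref{prop multCK blow-up}: the normal bundle is $\mathscr{N}_{\Delta_X/X\times X}\cong\mathscr{T}_X$, whose Chern classes lie in $\CH^*(X)_0$ by assumption, and the diagonal embedding is of pure grade $0$ because $\{\pi_X^i\}$ is self-dual. Proposition \ref{prop multCK blow-up} then endows $X^{[1,2]}$ with a self-dual multiplicative Chow--K\"unneth decomposition whose Chern classes sit in $\CH^*(X^{[1,2]})_0$.

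For $X^{[2,3]}$, sending $(\xi,\eta)$ to $(\xi,z)$, where $z$ is the residual point of $\eta$ over $\xi$, identifies $X^{[2,3]}$ with the blow-up $\mathrm{Bl}_{\mathcal Z}(X^{[2]}\times X)$ of $X^{[2]}\times X$ along the universal length-$2$ subscheme $\mathcal Z\subset X^{[2]}\times X$, i.e. the locus $z\in\Supp\xi$: away from $\mathcal Z$ the scheme $\eta=\xi\sqcup\{z\}$ is determined by $(\xi,z)$, while over a point of $\mathcal Z$ the fibre is the $\PP^{d-1}$ of thickening directions. The center $\mathcal Z$ is smooth and canonically isomorphic to $X^{[1,2]}$, so by the first step it carries a self-dual multiplicative Chow--K\"unneth decomposition with Chern classes in the graded-$0$ part; moreover $X^{[2]}$ carries such a decomposition by \cite[Theorem 6]{sv} together with Corollary \ref{prop G-invariantCK}, and hence so does $X^{[2]}\times X$ by Proposition \ref{prop multstable}.

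It then remains to verify the two hypotheses of Proposition \ref{prop multCK blow-up} for $\{\mathcal Z\subset X^{[2]}\times X\}$. For the pure-grade-$0$ condition, write the embedding as $\iota=(\alpha,\beta)$ with $\alpha:\mathcal Z=X^{[1,2]}\to X^{[2]}$ the degree-$2$ forgetful map and $\beta:\mathcal Z\to X$ the marked-point map. Under $\mathcal Z\cong\mathrm{Bl}_{\Delta_X}(X\times X)$, the map $\beta$ is the composite of the blow-down with a projection, hence of pure grade $0$ by Corollary \ref{cor projections grade 0} and Proposition \ref{prop action of correspondence}; and $\alpha$ is exactly the quotient morphism $\mathrm{Bl}_{\Delta_X}(X\times X)\to\mathrm{Bl}_{\Delta_X}(X\times X)/\mathfrak{S}_2=X^{[2]}$ by the swap involution, which is of pure grade $0$ by Proposition \ref{prop self-dual CK generic finite}. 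Since $\alpha$ and $\beta$ are of pure grade $0$, so is $\iota$, by multiplicativity of the induced grading on the triple product $\mathcal Z\times X^{[2]}\times X$ (Proposition \ref{prop multstable} and Corollary \ref{cor projections grade 0}). For the normal-bundle condition, the normal sequence gives $c(\mathscr{N}_{\mathcal Z/X^{[2]}\times X})=\alpha^*c(\mathscr{T}_{X^{[2]}})\cdot\beta^*c(\mathscr{T}_X)\cdot c(\mathscr{T}_{\mathcal Z})^{-1}$; each factor lies in $\CH^*(\mathcal Z)_0$—the first two because $\alpha,\beta$ are of pure grade $0$ while $c(\mathscr{T}_{X^{[2]}})$ and $c(\mathscr{T}_X)$ lie in the graded-$0$ part, the third because $\mathcal Z\cong X^{[1,2]}$ has its Chern classes in the graded-$0$ part by the first step. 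Proposition \ref{prop multCK blow-up} then yields the desired self-dual multiplicative Chow--K\"unneth decomposition on $X^{[2,3]}$, with Chern classes in $\CH^*(X^{[2,3]})_0$.

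The main obstacle is the pure-grade-$0$ verification for the embedding $\iota$, and specifically the compatibility statement underlying the claim for $\alpha$: one must confirm that the Chow--K\"unneth decomposition placed on $\mathcal Z=X^{[1,2]}$ via its blow-up presentation agrees with the $\mathfrak{S}_2$-invariant blow-up decomposition on $\mathrm{Bl}_{\Delta_X}(X\times X)$ whose $\mathfrak{S}_2$-quotient defines the decomposition of $X^{[2]}$ used in \cite{sv}, so that Proposition \ref{prop self-dual CK generic finite} applies to $\alpha$. Granting this identification, everything else reduces—pleasantly, and unlike the $X^{[3]}$ case—to the normal-bundle exact sequence together with the multiplicativity of the grading, with no contraction analysis required.
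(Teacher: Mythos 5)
Your proposal is correct and follows essentially the same route as the paper: both realise $X^{[1,2]}$ as $\mathrm{Bl}_{\Delta_X}(X\times X)$ and $X^{[2,3]}$ as the blow-up of $X^{[2]}\times X$ along the universal length-$2$ subscheme $\mathcal{Z}\cong X^{[1,2]}$, and both reduce everything to the two hypotheses of Proposition \ref{prop multCK blow-up} for $\mathcal{Z}\subset X^{[2]}\times X$. The only (harmless) divergence is in how those hypotheses are verified --- you compute $c(\mathscr{N}_{\mathcal{Z}/X^{[2]}\times X})$ from the factorization $\iota=(\alpha,\beta)$ and deduce pure grade $0$ of $\iota$ from $\Gamma_\iota=p_{12}^*\Gamma_\alpha\cdot p_{13}^*\Gamma_\beta$ together with multiplicativity, whereas the paper uses an explicit exact sequence for the normal bundle and identifies the two components of the graph pulled back to $Y\times X\times Y$ by hand; and the compatibility you flag for $\alpha$ holds automatically, since the decomposition of $X^{[2]}$ in \cite[Theorem 6]{sv} is by construction the one descended from the $\mathfrak{S}_2$-invariant blow-up decomposition on $\mathrm{Bl}_{\Delta_X}(X\times X)$.
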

	
	\begin{proof}
		The nested Hilbert scheme $X^{[1,2]}$ is by definition the same as the
		universal length-$2$ closed subscheme over $X^{[2]}$, which is isomorphic to the
		blow-up of $X\times X$ along the diagonal. The latter was already treated in
		\cite[Theorem 6]{sv}. 
		
		The nested Hilbert scheme $X^{[2,3]}$ is the blow-up of $X\times X^{[2]}$ along
		the universal length-2 subscheme.
		The latter, which we denote $Y$, is isomorphic to $X^{[1,2]}$. By \cite[Theorem 6]{sv} and the above,
		both 
		$X\times X^{[2]}$ and $Y\simeq X^{[1,2]}$ admit a self-dual multiplicative
		Chow--K\"unneth decomposition such that their Chern classes are of pure grade
		$0$. Therefore, by Proposition \ref{prop multCK blow-up},  we only need to check
		that the set $\{Y,X\times X^{[2]}\}$ is admissible in the sense of Definition
		\ref{def complete admissible}, that is, we only need to check that
		\begin{enumerate}[(i)]
			\item the Chern classes of the normal bundle $\mathscr{N}_{Y/X\times X^{[2]}}$
			sit
			in $\CH^*(Y)_0$\,;
			\item the morphism $i:Y\rightarrow X\times X^{[2]}$ is of pure grade 0.
		\end{enumerate}

		We first show (i).
		As already observed, $Y$ is the blow-up of $X\times X$ along the diagonal and is
		naturally equipped with a self-dual multiplicative Chow--K\"unneth
		decomposition. Let $\rho: Y\rightarrow X\times X$ be the blow-up morphism. Then
		$\rho$ is of pure grade-0 by Proposition \ref{prop multCK blow-up}. Consider the
		following diagram
		\[
		\xymatrix{
			& & \mathscr{T}_X \ar@{=}[r]\ar[d] &\mathscr{T}_X\ar[d] &\\
			0\ar[r] &\mathscr{T}_Y\ar[r]\ar[d] &\mathscr{T}_X \oplus \mathscr{T}_{X^{[2]}}
			\ar[r]\ar[d] &\mathscr{N}_{Y/X\times X^{[2]}}\ar[r] &0\\
			&\mathscr{T}_{X^{[2]}}\ar@{=}[r]\ar[d] &\mathscr{T}_{X^{[2]}} &&\\
			&\mathscr{Q} & & &
		}
		\]
		where all sheaves are understood to be their pull-back to $Y$ and $\mathscr{Q}$
		is the quotient of $\mathscr{T}_{X^{[2]}}|_Y$ by $\mathscr{T}_Y$. 
		We already know that all the Chern classes of $\mathscr{Q}$ are in the graded-0
		part $\CH^*(Y)$.
		Note that every local section $v$ of $\mathscr{N}_{Y/X\times X^{[2]}}$ lifts to
		$(v_1,v_2)\in \mathscr{T}_{X^{[2]}}\oplus \mathscr{T}_X$, and that the image
		$\bar{v}_2\in\mathscr{Q}$ is independent of the choice of the lifting
		$(v_1,v_2)$. This gives a homomorphism $\mathscr{N}_{Y/X\times X^{[2]}}
		\longrightarrow \mathscr{Q}$, which fits into the following short exact sequence
		\[
		\xymatrix{
			0\ar[r] &\mathscr{T}_X|_Y\ar[r] &\mathscr{N}_{Y/X\times X^{[2]}}\ar[r]
			&\mathscr{Q}\ar[r] &0.
		}
		\]
		By assumption, the Chern classes of $\mathscr{T}_X$ are in $\CH^*(X)_0$.
		Moreover, the natural morphism $Y \hookrightarrow X\times X^{[2]}
		\twoheadrightarrow X$ is the composition of $\rho$ with a projection $X\times
		X\rightarrow X$, the two of which are of pure grade-0. It follows that the Chern
		classes of $\mathscr{T}_X|_Y$ sit in $\CH^*(Y)_0$. The multiplicativity of
		$\CH^*(Y)_0$ then implies that the Chern classes of $\mathscr{N}_{Y/X\times
			X^{[2]}}$ are in the graded-0 part.
		
		Now we show (ii). To show that the graph $\Gamma_i \subset Y\times X\times
		X^{[2]}$ is of pure grade-0, we only need to show that its pull back $\Gamma'_i
		\subset Y\times X \times Y$ is of pure grade-0. It is clear that $\Gamma'_i$
		has two irreducible components
		\[
		((x,y),x,(x,y))\qquad \text{and}\qquad ((y,x),x,(x,y)).
		\]
		Then it is clear that both components are of grade-0.	
	\end{proof}

	\begin{rmk}
		There is a natural generically 3-to-1 morphism $X^{[2,3]} \rightarrow X^{[3]}$.
		With the notations of \S \ref{sec stratification}, this morphism is finite
		\'etale over $X^{[3]} \backslash B_2$, totally ramified along $B_2\backslash
		B_3$, and the fiber over a point of $B_3$ is $\PP^1$. It is likely that a
		multiplicative Chow--K\"unneth decomposition for $X^{[3]}$  could also be obtained  by proving an
		analogue of the technical descent Lemma \ref{lem technical} so that Proposition \ref{prop multCK generic finite} applies to the generically finite morphism $X^{[2,3]}\rightarrow X^{[3]}$. 
	\end{rmk}

\end{document}